\def\@Rref#1{\hbox{\rm \ref{#1}}}
\def\Rref#1{\@Rref{#1}}
\theoremstyle{plain}
\newtheorem{theorem}{Theorem}[section]
\newtheorem{proposition}[theorem]{Proposition}
\newtheorem{assumption}[theorem]{Assumption}
\newtheorem{lemma}[theorem]{Lemma}
\theoremstyle{definition}
\newtheorem{definition}{Definition}[section]
\newtheorem{example}[definition]{Example}
\newtheorem{remark}[definition]{Remark}
\newcommand{\re}{\mathop{\rm Re}\nolimits}
\newcommand{\diag}{\mathop{\rm diag}\nolimits}
\newcommand{\dom}{\mathop{\rm dom}}
\newcommand{\inc}{\mathop{\textrm{in}}\nolimits}
\newcommand{\out}{\mathop{\textrm{out}}\nolimits}
\begin{document}

\title[Stabilization of Infinite-dimensional Systems]{Stabilization of Infinite-dimensional Systems under Quantization and
	Packet Loss}

\thispagestyle{plain}

\author{Masashi Wakaiki}
\address{Graduate School of System Informatics, Kobe University, Nada, Kobe, Hyogo 657-8501, Japan}
 \email{wakaiki@ruby.kobe-u.ac.jp}
 \thanks{This work was supported in part by the Kenjiro Takayanagi Foundation and
 	JSPS KAKENHI Grant Numbers
 	20K14362 and 24K06866.}

\begin{abstract}
We study the problem of stabilizing infinite-dimensional systems with
input and output quantization.
The closed-loop system we consider is subject to
packet loss, whose average
duration is assumed to be bounded.
Given a bound on the initial state, we propose a design method
for dynamic quantizers with zoom parameters.
We show that the closed-loop state starting in a given region exponentially
converges to zero if 
bounds on quantization errors and packet-loss intervals
satisfy suitable conditions.
Since the norms of the operators
representing the system dynamics are used 
in the proposed quantizer design,
we also present methods for approximately computing the operator norms.
\end{abstract}

\keywords{Approximation of operator norms,
	infinite-dimensional systems, packet loss, quantization.} 

\maketitle

\section{Introduction}
\subsubsection*{Motivation and related studies}
In networked control systems, components such as actuators, sensors, and
controllers are connected through a network.
The presence of communication channels introduces
quantization errors and packet dropouts.
On the other hand, we often encounter systems which are described by
delay differential equations and partial differential equations (PDEs).
They are typical classes of infinite-dimensional systems, i.e.,
systems with infinite-dimensional state spaces.
This paper addresses a control problem involving 
three aspects: quantization, 
packet loss, and infinite-dimensionality.

It is well known that applying a static quantizer with
a finite number of values does not in general 
preserve global asymptotic stability of 
the closed-loop system.
In fact, when the input of the quantizer is outside the quantization region,
the quantization error may be large and lead to
the instability of the closed-loop system. 
In the quantization region, the error is small but non-zero. 
As a result, asymptotic convergence is not achieved in general.
To overcome the above difficulties, dynamic quantizers with 
adjustable zoom parameters have been introduced in \cite{Brockett2000,Liberzon2003Automatica} for finite-dimensional systems.
By adjusting the zoom parameter, asymptotic convergence can be achieved 
without quantizer saturation.
An overview  on 
feedback control under data-rate constraints can be found in \cite{Nair2007}.

Packets containing measurements and control inputs may be
dropped due to communication failure.
Non-malicious packet loss is typically modeled as a  stochastic process.
Another cause of packet loss is denial-of-service (DoS) attacks in the form of 
jamming and intentional packet-dropping.
Not all attackers launch such malicious packet loss based on
stochastic models.
To describe the uncertainty of DoS attacks,
a deterministic model of packet loss has been introduced in \cite{Persis2014,Persis2015}.
The problem of stabilizing finite-dimensional systems with
quantization and packet loss has
been actively investigated; 
see, e.g., \cite{Tatikonda2004noisy,You2010,Okano2014} for non-malicious packet loss
and \cite{Wakaiki2020DoS,Feng2020,Liu2022} for malicious packet loss. 
We refer to the survey articles \cite{Hespanha2007,Cetinkaya2019_Survey}
for further literature on control systems with packet loss.

Quantized control of
infinite-dimensional systems is an active and expanding research area.
A necessary and sufficient condition for infinite-dimensional 
and time-varying autoregressive moving average model to be asymptotically
stabilizable at a given data rate has been provided in 
\cite{Nair2000SCL}.
Control of time-delay systems with limited
information has been extensively studied, e.g., in \cite{Liberzon2006,Fridman2009,Persis2010,Liu2015}.
Based on operator semigroup theory,
exponential $L^p/\ell^p$-input-to-state stability, $1\leq p \leq \infty$, has been characterized for
abstract infinite-dimensional linear systems with sampled-data controllers in \cite{Logemann2013}.
This result in the case $p= \infty$ covers the closed-loop system
with static uniform quantization. 
Stabilization of multiple classes of PDE systems with 
static quantization has also been investigated; see
\cite{Tanwani2016, Espitia2017,Song2021} for first-order hyperbolic systems,
\cite{Selivanov2016PDE, Katz2022, Zheng2024} for second-order parabolic systems, and
\cite{Kang2023} for third-order parabolic systems.
Control methods for flexible structure systems with static input quantization 
have been proposed, e.g., in \cite{Gao2018, Wang2021, Zhao2021,Ji2023}.

Dynamic quantizers with zoom parameters have been
designed for first-order hyperbolic systems in \cite{Liberis2020}.
Several coding procedures have been numerically investigated for
a class of nonlinear distributed-parameter systems called sine-Gordon chains
in the input quantization case \cite{Andrievsky2019} and the output quantization case \cite{Andrievsky2023}.
The problem of state estimation has been studied both analytically and numerically
for semilinear wave equations with communication constraints 
in \cite{Dolgopolik2024}.
However,
to the best of our knowledge,
previous research has not systematically addressed
the design
of packet-loss resilient dynamic quantizers
for
PDE systems or abstract infinite-dimensional systems.

\subsubsection*{Problem description}
We consider abstract infinite-dimensional linear systems, especially
regular linear systems \cite{Weiss1989_regular,Weiss1994}.
This class of systems often appears in
the study of PDEs with boundary control and observation. For the details of
regular linear systems, we refer to
the survey papers \cite{Weiss2001, Tucsnak2014} and the book \cite{Staffans2005}.
We assume that the input and output spaces of the regular linear system
are finite dimensional.
In our setting,
the continuous-time plant is connected to 
the discrete-time controller through
a zero-order hold and 
a generalized sampler  as in \cite{Logemann2013}, and the closed-loop system
is exponentially stable in the case
without quantization or packet loss.

This paper addresses the problem of stabilizing
the infinite-dimensional  sampled-data system under input and output quantization and packet loss.
The control objective of this problem is that the closed-loop state
starting in a given region exponentially
converges to zero.
We use a deterministic model of packet loss
as in the existing studies \cite{Persis2014,Persis2015, 
	Wakaiki2020DoS,Feng2020,Liu2022} on DoS attacks.
Our primary focus is on 
packet loss in the sensor-to-controller channel,
but the scenario where
packet dropouts occur 
simultaneously in the sensor-to-controller and
controller-to-actuator channels is also discussed.

To solve the stabilization problem, 
we use the following 
discretization-based approach, as is commonly done for 
quantized control of finite-dimensional systems (see,  e.g.,  \cite{Bondarko2014,Liu2022}):
First, we construct the discretized plant  consisting of
the continuous-time plant, the zero-order hold, and the generalized sampler.
Second, we design dynamic quantizers for the discretized plant with packet loss. This second step
is the main part of our study.
Since  the plant state only at sampling times is considered in the second step,
we show in the last step that the plant state exponentially converges to zero 
between sampling times.

\subsubsection*{Comparisons and contributions}
The main contribution of this study is 
the design of packet-loss resilient 
dynamic quantizers for
infinite-dimensional systems.
There are several differences between this study and the 
existing studies \cite{Liberis2020,Andrievsky2019,Andrievsky2023,Dolgopolik2024}
on the design of dynamic quantizers for PDE systems.
First, the proposed quantizers are resilient to packet loss, which
is a feature not addressed in the existing studies.
Second, while the existing studies are limited to
either input or output quantization, we deal with both input and output quantization.
Finally, we consider
abstract infinite-dimensional systems,
rather than a specific class of PDEs, which
enables us to extend the trajectory-based approach  developed for
finite-dimensional systems in \cite{Wakaiki2020DoS}.
Therefore, the approach we take in this paper differs from the Lyapunov-based approaches used 
in the existing studies \cite{Liberis2020,Dolgopolik2024}.

In the finite-dimensional case \cite{Wakaiki2020DoS},
the norms of the matrices in the state and output equations are used
for the quantizer design.
For infinite-dimensional systems, the matrices are replaced by
operators. Hence, we give
an efficient formula for computing the norms of these operators.
Furthermore,  
the finite-dimensional case \cite{Wakaiki2020DoS}
focuses on
plants with output quantization and
controllers consisting of a Luenberger observer and a feedback gain,
whereas
here we consider plants with input and output quantization
and general strictly proper controllers.
For infinite-dimensional plants, 
the observer-based controller used in \cite{Wakaiki2020DoS}
inherits infinite-dimensionality unless model reduction techniques are applied, 
which
leads to implementation challenges.
To allow a broader range of controllers, including finite-dimensional ones, it is crucial to relax the structural restriction imposed on controllers.

To achieve the aforementioned contributions,
we first study the output feedback stabilization 
problem for infinite-dimensional discrete-time systems
with quantization and packet loss.
We assume that 
a bound on the initial state of the closed-loop system is known.
The proposed 
quantizers dynamically change the zoom parameters to ensure that 
the plant input and output are contained in the quantization regions and that 
the quantization errors converge to zero. 
When packet loss occurs,
the system becomes open-loop  temporarily. Hence,
the quantizers employ different update methods depending on whether packet loss is detected.
We show that the closed-loop system achieves 
exponential convergence
if the parameters of quantization coarseness and packet-loss duration satisfy
suitable conditions.

Next, we develop  methods for computing 
the operator norms
used in the design of the dynamic quantizers.
We prove that the norms of approximate operators on finite-dimensional spaces
converge to the norms of the operators on infinite-dimensional spaces under some assumptions.
To state the problem more precisely,
let $A$ be an bounded linear  operator on an infinite-dimensional space 
$X$, and
let $(A_N)_{N\in \mathbb{N}}$ be a sequence of approximate operators on finite-dimensional spaces
$X_N$.
We show that the norm 
sequence
$(\|A_N\|_{\mathcal{L}(X_N)})_{N \in \mathbb{N}}$ converges to $\|A\|_{\mathcal{L}(X)}$ as $N \to \infty$.
Note that the operator $A - A_N$ is not well-defined because
$X$ and $X_N$ are different.

Finally, 
we consider sampled-data regular linear systems with
input and output quantization and packet loss.
We show that exponential convergence at sampling times can be extended 
to that on $[0,\infty)$ for the sampled-data system.
The feedthrough term of the discretized plant is used
to design the dynamic quantizers. 
This term is a matrix, but the input-output operator of the regular linear system appears 
in its definition. 
We derive a series representation of the feedthrough 
matrix,
assuming that the semigroup generator
governing the state evolution of the uncontrolled system has
a spectral expansion with respect to a Riesz basis like
Riesz-spectral operators \cite[Section~3.2]{Curtain2020}
and diagonalizable operators \cite[Section~2.6]{Tucsnak2009}.

The proposed quantizer design builds on the approach described
in 
our position paper \cite{WakaikiASCC2022}, which
has presented a method for designing 
packet-loss resilient dynamic quantizers 
for infinite-dimensional discrete-time systems with
output quantization.
In this paper, we extend it to 
sampled-data systems with 
input and output quantization and also
establish
methods for the approximate computation of 
the operator norms used in the quantizer design.

\subsubsection*{Paper organization}
The remainder of this paper is organized as follows.
In Section~\ref{sec:Quantization}, we propose 
a design method of dynamic quantizers
for infinite-dimensional  discrete-time systems with packet dropouts.
Section~\ref{sec:norm_approximation} is devoted to the
approximate computation of the operator norms used in the quantizer design.
In Section~\ref{sec:regular_sys}, we examine the inter-sample behavior and the 
feedthrough term of sampled-data regular linear systems.
Section~\ref{sec:example} contains simulation results, and
concluding remarks are given in Section~\ref{sec:conclusion}.

\subsubsection*{Notation}
We denote by $\mathbb{N}_0$ the set of nonnegative integers and 
by $\mathbb{R}_+$ the set of nonnegative real numbers.
The complex conjugate of a complex number $\lambda$ is denoted by
$\overline{\lambda}$. 
For $\lambda_1,\dots,\lambda_n \in \mathbb{C}$, we use the notation
$\diag(\lambda_1,\dots,\lambda_n) \in \mathbb{C}^{n \times n}$ to denote
the diagonal matrix whose
$j$th diagonal element is equal to $\lambda_j$.
We denote
the $(j,\ell)$-entry of a matrix $P$
by $P_{j\ell}$.
The space of all functions from $\mathbb{N}_0$ to $\mathbb{C}^n$ is denoted by $F(\mathbb{N}_0,\mathbb{C}^n)$. 

Let $X$ and $Y$ be Banach spaces. 
We denote by $X \times Y$ the product space of $X$ and $Y$, equipped with
the norm 
\[
\left\|
\begin{bmatrix}
	x \\ y
\end{bmatrix}	
\right\|_{X\times Y} \coloneqq (\|x\|_X^p + \|y\|_Y^p)^{1/p}
\]
for some $p \in[1,\infty)$ or
\[
\left\|
\begin{bmatrix}
	x \\ y
\end{bmatrix}	
\right\|_{X\times Y} \coloneqq \max\{ \|x\|_X,\,
\|y\|_Y\}.
\]
The space of all bounded linear operators from $X$ to $Y$ is denoted by $\mathcal{L}(X,Y)$.
We write $\mathcal{L}(X) \coloneqq \mathcal{L}(X,X)$. For a linear operator $A$ from $X$ to $Y$,
the domain of $A$ is denoted by $\dom (A)$.
For a subset $S \subset X$ and a linear operator $A\colon \dom (A) \subset X \to Y$, 
we denote by $A|_S$
the restriction of $A$ to $S$. 
The spectrum and the 
resolvent set of a linear operator $A\colon \dom (A) \subset X \to X$ are denoted by
$\sigma(A)$ and $\varrho(A)$, respectively.
The spectral radius of $A \in \mathcal{L}(X)$ is denoted by $r(A)$. 
We denote 
by $X'$ the dual space of $X$ and
by $A'$ the dual operator of $A \in \mathcal{L}(X,Y)$.
Let $Z$ and $W$ be Hilbert spaces. For a densely-defined linear operator 
$A$ from $Z$
to $W$,
the Hilbert space adjoint of $A$
is denoted by $A^*$.

\section{Stabilization of 
	discrete-time systems under quantization and packet loss}
\label{sec:Quantization}
In this section, the design of dynamic quantizers resilient to
packet loss is investigated for 
infinite-dimensional discrete-time 
systems. First, the networked control
systems we consider is introduced. Next,
an update rule for the zoom parameters of 
the dynamic quantizers is proposed for controllers implementing a {\em zero compensation strategy},
by which
the input is set to zero when packet loss occurs.
We then prove that the state of the closed-loop system starting in a given region exponentially
converges to zero. We further present an analogous
result for controllers utilizing 
a {\em hold compensation strategy}, by which the last 
input is held in the event of packet loss.

\subsection{Networked control system}
\label{sec:cls}
\subsubsection{Plant and controller}
Let $X$, $X_{\rm c}$, $U$, and $Y$ be
nontrivial  real or complex Banach spaces.
We consider the following discrete-time plant:
\begin{equation}
	\label{eq:plant}
	\left\{
	\begin{aligned}
		x(k+1) &= Ax(k) + Bq_{\inc}(k),\\
		y(k) &= Cx(k) + Dq_{\inc}(k)  
	\end{aligned}
	\right.
\end{equation}
for $k \in \mathbb{N}_0$ with initial state $x(0) = x^0\in X$,
where 
$A \in \mathcal{L}(X)$, $B \in \mathcal{L}(U,X)$, $C \in \mathcal{L}(X,Y)$, 
and
$D \in \mathcal{L}(U,Y)$.
In \eqref{eq:plant},
$x(k) \in X$, $y(k) \in Y$, and 
$q_{\inc}(k) \in U$ are the state,
output, and input, respectively, of the plant at time $k$. 
The plant input $q_{\inc}(k)$ is
the quantized value of the controller output
$u(k)$ at time $k$.

Throughout Section~\ref{sec:Quantization}, we assume that 
$r(A) \geq 1$, which
simplifies the stability analysis 
in Section~\ref{sec:exp_conv_analysis}. If $r(A) < 1$, then the plant \eqref{eq:plant} with 
zero input $q_{\inc}(k) \equiv 0$ is exponentially stable, i.e.,
there exist $\Omega \geq 1$ and $\gamma \in (0,1)$ such that 
$\|x(k)\|_X \leq \Omega \gamma^k \|x^0\|_X$ for all $k \in \mathbb{N}_0$;
see, e.g., \cite[Lemma~2.2.11]{Tucsnak2009}. It is not our interest to consider
the scenario where the open-loop system is already stable.

In the absence of packet loss, we use the following controller:
\begin{equation}
	\label{eq:controller}
	\left\{
	\begin{aligned}
		x_{\rm c}(k+1)
		&= Px_{\rm c}(k) + Qq_{\out}(k),\\
		u(k) &= R x_{\rm c}(k) 
	\end{aligned}
	\right.
\end{equation}
for $k \in \mathbb{N}_0$ with initial state $x_{\rm c}(0) = x_{\rm c}^0 \in X_{\rm c}$,
where 
$P \in \mathcal{L}(X_{\rm c})$, 
$Q\in \mathcal{L}(Y, X_{\rm c})$, and 
$R \in \mathcal{L}(X_{\rm c},U)$.
In \eqref{eq:controller},
$x_{\rm c}(k) \in X_{\rm c}$, 
$u(k) \in U$, and 
$q_{\out}(k) \in Y$ are 
the state, output, and input, respectively, of the controller at time $k$.
The controller input $q_{\out}(k) $
is the quantized value of the plant 
output $y(k)$ at time $k$. 

In Sections~\ref{sec:cls}--\ref{sec:exp_conv_analysis_hold},
we  consider the scenario  where a packet for
the plant output may be dropped
but 
the plant input is always successfully transmitted.
In the closed-loop system,
the controller sends an acknowledgment packet to the sensor 
immediately when it receives the quantized plant output 
$q_{\out}$, and the acknowledgment is assumed to be successfully
transmitted to the sensor by the next sampling time.
The sensor can find that 
the packet containing $q_{\out}$ has been dropped
if it does not receive the acknowledgment.  This use of acknowledgment packets is common in quantized control with packet loss; see, e.g., \cite{Tatikonda2004noisy,You2010,Okano2014} 
for the case of non-malicious packet loss and 
\cite{Wakaiki2020DoS,Feng2020,Liu2022} for the case of malicious packet loss.
Furthermore, the controller sends a notification packet to the actuator when
a transmission failure occurs in the sensor-to-controller channel.
Figure~\ref{fig:closed_loop} illustrates the networked control 
system we consider in Sections~\ref{sec:cls}--\ref{sec:exp_conv_analysis_hold}.

\begin{figure}[tb]
	\centering
	\includegraphics[width = 8cm,clip]{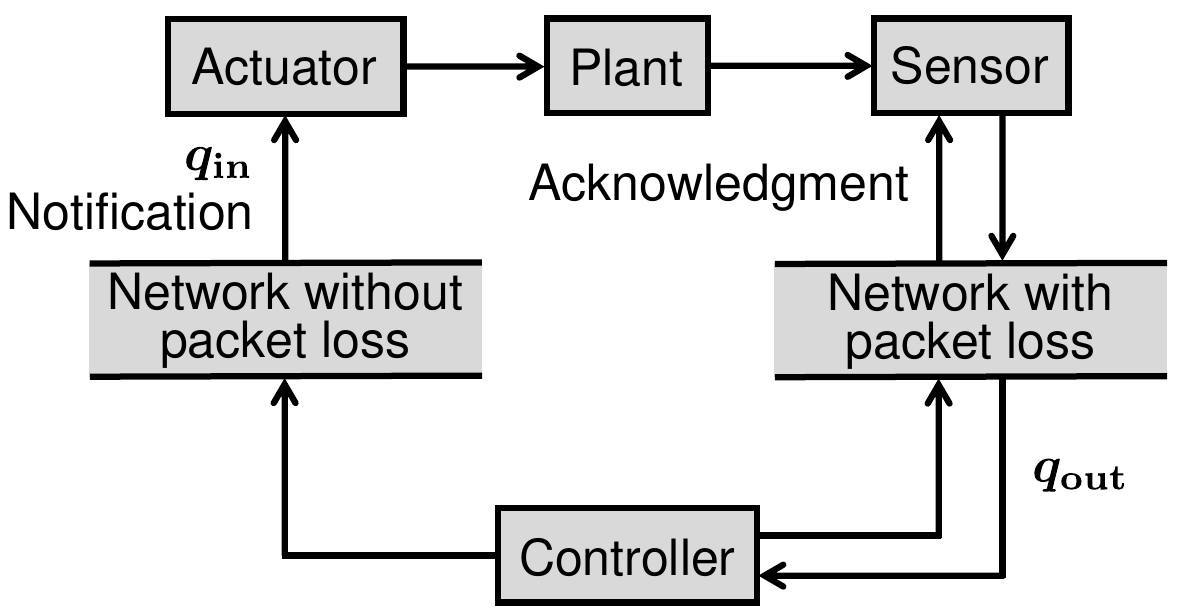}
	\caption{Networked control system with quantization and packet loss.}
	\label{fig:closed_loop}
\end{figure}

In the case without quantization or packet loss,
the plant state $x$ and 
the controller state $x_{\rm c}$ satisfy
\begin{equation}
	\label{eq:ideal_closed_loop}
	\begin{bmatrix}
		x(k+1) \\ x_{\rm c}(k+1)
	\end{bmatrix} = 
	\begin{bmatrix}
		A & BR  \\ QC & P+QDR
	\end{bmatrix}
	\begin{bmatrix}
		x(k) \\ x_{\rm c}(k)
	\end{bmatrix}
\end{equation}
for all $k \in \mathbb{N}_0$.
Define the state space $Z$ of the closed-loop system by $Z \coloneqq X \times X_{\rm c}$.
We assume that the ideal closed-loop system \eqref{eq:ideal_closed_loop}
is power stable. 
\begin{assumption}
	\label{assump:power_stability}
	The ideal closed-loop operator 
	\begin{equation}
		\label{eq:A_id_def}
		\mathcal{A}_{\rm id} \coloneqq
		\begin{bmatrix}
			A & BR  \\ QC & P+QDR
		\end{bmatrix}
	\end{equation}
	is power stable,
	i.e., there exist constants $M \geq 1$ and $\rho \in (0,1)$ such that 
	\begin{equation}
		\label{eq:Aid_power}
		\|\mathcal{A}_{\rm id}^k\|_{\mathcal{L}(Z)} \leq M \rho^k
	\end{equation}
	for all $k \in \mathbb{N}_0$.
\end{assumption}

\begin{remark}
	\label{rem:finite_dim_case}
	Let 
	the input space $U$ and the output space $Y$
	be finite dimensional.
	Assumption~\ref{assump:power_stability} is satisfied for
	a controller \eqref{eq:controller} 
	with finite-dimensional state space $X_{\rm c}$ if and only if
	the plant \eqref{eq:plant} is stabilizable
	and detectable;
	see \cite[Theorem~7]{Logemann1992}.
	The proof of this theorem is constructive, i.e., also provides a design method for 
	finite-dimensional stabilizing controllers. 
	\hspace*{\fill} $\triangle$ 
\end{remark}

\subsubsection{Packet loss}
To make an assumption on packet loss,
we define a binary variable $\theta(k) \in \{0,1\}$ by
\begin{equation*}
	\theta(k) \coloneqq 
	\begin{cases}
		1, & \text{if a packet dropout occurs at time $k$}, \\
		0, & \text{otherwise}
	\end{cases}
\end{equation*}	
for $k \in \mathbb{N}_0$.
Define $\Theta(k)$ by
\[
\Theta(k) \coloneqq \sum_{\ell=0}^{k-1} \theta(\ell)
\]
for $k \in \mathbb{N}$,
which denotes the total number of packet dropouts on the interval $[0,k)$.

Let us consider the situation where
packet loss is intentionally caused by malicious attackers.
High-power interference signals for jamming attacks 
demand energy expenditure, which may make continuous attacks impractical.
In addition, it is possible that attackers deliberately limit their offensive 
activities to evade detection. Such constraints
are formulated  in the following assumption introduced 
in \cite{Persis2014,Persis2015};
see also the survey \cite{Cetinkaya2019_Survey}.

\begin{assumption}
	\label{assump:duration}
	There exist constants $\Xi \geq 0$ and $\nu \in [0,1]$ such that 
	$\Theta(k)$ satisfies 
	\[
	\Theta(k) \leq \Xi + \nu k
	\]
	for all $k  \in \mathbb{N}$.
	We call $\nu$ the {\it duration bound} of packet loss.
\end{assumption}

We have $\limsup_{k \to \infty}
\Theta(k)/k \leq \nu$ under Assumption~\ref{assump:duration}. This implies 
that the time-averaged number of packet
dropouts is bounded.
The model in Assumption~\ref{assump:duration} is also closely related to
the deterministic model of non-malicious packet loss introduced in \cite{Hassibi1999CDC};
see also \cite{Hespanha2007,Persis2018}.

\subsubsection{Dynamic quantizer with zoom parameter}
\label{sec:encoding_scheme}
We use the following dynamic quantizers introduced in  \cite{Brockett2000,Liberzon2003Automatica}:
Let  $\Delta_{\inc},\Delta_{\out}\geq 0$.
Assume that the functions $\mathsf{Q}_{\inc} \colon U \to U$ 
and $\mathsf{Q}_{\out} \colon Y \to Y$
satisfy
\begin{subequations}
	\label{eq:quantization}
	\begin{align}
		\label{eq:quantization_input}
		\|u\|_U \leq 1\quad 
		&\Rightarrow \quad
		\| \mathsf{Q}_{\inc} (u) - u \|_U \leq \Delta_{\inc}, \\
		\label{eq:quantization_output}
		\|y\|_Y \leq 1\quad 
		&\Rightarrow \quad
		\| \mathsf{Q}_{\out} (y) - y \|_Y \leq \Delta_{\out}.
	\end{align}
\end{subequations}
The parameters $\Delta_{\inc}$ and $\Delta_{\out}$
are the error bounds for quantization. 
For $k \in \mathbb{N}_0$,
let $\mu_{\inc}(k),\mu_{\out}(k) >0$, and 
define the quantized values $q_{\inc}(k)$ and $q_{\out}(k)$ by 
\begin{align*}
	q_{\inc} (k) &\coloneqq \mu_{\inc}(k) \mathsf{Q}_{\inc}\left(  \frac{u(k)}{\mu_{\inc}(k)} \right), \\ 
	q_{\out} (k) &\coloneqq \mu_{\out}(k) \mathsf{Q}_{\out}\left(  \frac{y(k)}{\mu_{\out}(k)} \right).
\end{align*}
We call 
$\mu_{\inc}$ and $\mu_{\out}$
the {\em zoom parameters} for quantization.
By \eqref{eq:quantization}, for all $k \in \mathbb{N}_0$,
\begin{subequations}
	\label{eq:qe}
	\begin{align}
		\|u(k)\|_U \leq \mu_{\inc}(k)
		&\quad \Rightarrow \quad 
		\|q_{\inc}(k) - u(k)\|_U \leq  \Delta_{\inc}\mu_{\inc}(k) , \label{eq:qe_u} \\
		\|y(k)\|_Y \leq \mu_{\out}(k)
		&\quad \Rightarrow \quad 
		\|q_{\out}(k) - y(k)\|_Y \leq  \Delta_{\out}\mu_{\out}(k) . \label{eq:qe_y}
	\end{align}
\end{subequations}
Acknowledgments and packet-loss notifications allow us to use the binary variable
$\theta(k)$ to determine  $\mu_{\inc}(k+1)$ and $\mu_{\out}(k+1)$.

\subsection{Controller with zero compensation}
\label{sec:zero_compensation}
In this and the next subsection, we consider the following
controller equipped with the zero compensation strategy:
\begin{equation}
	\label{eq:controller_zero}
	\left\{
	\begin{aligned}
		x_{\rm c}(k+1) &= P_{\theta(k)}x_{\rm c}(k) + (1-\theta(k))
		Qq_{\out}(k),\\
		u(k) &= R x_{\rm c}(k) 
	\end{aligned}
	\right.
\end{equation}
for $k \in \mathbb{N}_0$ with initial state $x_{\rm c}(0) = x_{\rm c}^0 \in X_{\rm c}$,
where $P_0 = P$ and $P_1 \in \mathcal{L}(X_{\rm c})$ is a certain 
operator designed for the case when the packet containing
$q_{\out}$ is dropped.
A switching operator $P_{\theta(k)}$ has to be considered, for example,
in scenarios where 
an observer-based controller is implemented.

The closed-loop system can be written as
\begin{equation}
	\label{eq:closed_dynamics}
	\left\{
	\begin{aligned}
		z(k+1) &= 
		\mathcal{A}_{\theta(k)}z(k) + \mathcal{B}_{\inc,\theta(k)}(q_{\inc}(k) - u(k) )  + \mathcal{B}_{\out,\theta(k)}(q_{\out}(k) - y(k) ),\\
		u(k) &= \mathcal{C}_{\inc} z(k),   \\
		y(k) &= \mathcal{C}_{\out} z(k) + D(q_{\inc}(k) - u(k) ) 
	\end{aligned}
	\right.
\end{equation}
for all $k \in \mathbb{N}_0$,
where for $\theta \in \{0,1\}$,
\begin{subequations}
	\label{eq:zero_parameter}
	\begin{alignat}{2}
		z(k) &\coloneqq 
		\begin{bmatrix}
			x(k) \\ x_{\rm c}(k)
		\end{bmatrix},  & & \hspace{-70pt}
		z(0) = z^0 \coloneqq
		\begin{bmatrix}
			x^0 \\ x_{\rm c}^0
		\end{bmatrix}, \label{eq:z_def_zero} \\
		\mathcal{A}_{\theta} &\coloneqq
		\begin{bmatrix}
			A & BR \\ (1-\theta)QC & P_\theta+(1-\theta)QDR
		\end{bmatrix}, \label{eq:A_eta_def} &&\\
		\mathcal{B}_{\inc,\theta} &\coloneqq
		\begin{bmatrix}
			B \\ (1-\theta)QD
		\end{bmatrix},&&  \hspace{-70pt}
		\mathcal{B}_{\out,\theta} \coloneqq
		\begin{bmatrix}
			0 \\ (1-\theta)Q
		\end{bmatrix},\label{eq:B_eta_def}\\
		\mathcal{C}_{\inc} &\coloneqq
		\begin{bmatrix}
			0 & R
		\end{bmatrix},&& \hspace{-62.3pt}
		\mathcal{C}_{\out} \coloneqq
		\begin{bmatrix}
			C & DR
		\end{bmatrix}.\label{eq:C_eta_def}
	\end{alignat}
\end{subequations}
Note that 
$\mathcal{A}_0 = \mathcal{A}_{\rm id}$, where $\mathcal{A}_{\rm id}$ is 
the ideal closed-loop operator given in Assumption~\ref{assump:power_stability}.
We study the following notion of convergence
for the closed-loop system \eqref{eq:closed_dynamics}.
\begin{definition}
	\label{def:exp_conv}
	The closed-loop system \eqref{eq:closed_dynamics} 
	achieves {\em exponential convergence with initial state bound $E_0>0$} if
	there exist  constants 
	$\Omega \geq 1$ and $\gamma \in (0,1)$, independent of $E_0$,
	such that 
	\[
	\|z(k)\|_Z \leq \Omega  \gamma^k E_0
	\]
	for all $k \in \mathbb{N}_0$ and $z^0\in Z$ satisfying $\|z^0\|_Z \leq E_0$.
\end{definition}

\subsection{Exponential convergence under quantization and 
	packet loss}
\label{sec:exp_conv_analysis}
Let an initial state bound $E_0 >0$ be given, and 
let 
constants $M \geq 1$ and $\rho \in (0,1)$ be chosen such that 
\eqref{eq:Aid_power} holds for all $k \in \mathbb{N}_0$.
We update 
the zoom parameters $\mu_{\inc}$ and $\mu_{\out}$ in the following way:
\begin{equation}
	\label{eq:E_difference_eq}
	\left\{
	\begin{aligned}
		\mu(k+1) &\hspace{-0.92pt}=\hspace{-0.92pt} 
		\eta_{\theta(k)}\mu(k);\quad \mu(0) = ME_0,\\
		\mu_{\inc}(k) &\hspace{-0.92pt}=\hspace{-0.92pt}  \|R\|_{\mathcal{L}(X_{\rm c},U)} \mu(k),  \\
		\mu_{\out}(k) &\hspace{-0.92pt}=\hspace{-0.92pt}  (\|\mathcal{C}_{\out}\|_{\mathcal{L}(Z,Y)}  \!+\! 
		\Delta_{\inc} \hspace{-0.65pt}\|D\|_{\mathcal{L}(U,Y)} \hspace{-0.7pt} \|R\|_{\mathcal{L}(X_{\rm c},U)}
		)
		\mu(k)
	\end{aligned}
	\right.
\end{equation}
for $k \in \mathbb{N}_0$,
where  $\eta_0= \eta_0(\Delta_{\inc},\Delta_{\out}) $
and $\eta_1 = \eta_1(\Delta_{\inc}) $ are defined by
\begin{subequations}
	\label{eq:eta_def_subeq}
	\begin{align}
		\eta_0 
		&\coloneqq \rho+ M ( 
		\Delta_{\inc} \|\mathcal{B}_{\inc,0}\|_{\mathcal{L}(U,Z)} \hspace{1pt} 
		\|R\|_{\mathcal{L}(X_{\rm c},U)}+  
		\Delta_{\out}
		\|Q\|_{\mathcal{L}(Y,X_{\rm c})} \hspace{1pt}\|\mathcal{C}_{\out}\|_{\mathcal{L}(Z,Y)} 
		\label{eq:eta0_def} \notag\\
		&\hspace{58pt}+  \Delta_{\inc} \Delta_{\out}
		\|Q\|_{\mathcal{L}(Y,X_{\rm c})} \hspace{1pt}  
		\|D\|_{\mathcal{L}(U,Y)} \hspace{1pt}\|R\|_{\mathcal{L}(X_{\rm c},U)}
		), \\
		\eta_1 
		&\coloneqq M ( \|\mathcal{A}_1\|_{\mathcal{L}(Z)} 
		+ \Delta_{\inc} \|B\|_{\mathcal{L}(U,X)} \hspace{1pt} \|R\|_{\mathcal{L}(X_{\rm c},U)}
		). \label{eq:eta1_def}
	\end{align}
\end{subequations}
In the above update rule,
$\eta_0$ and $\eta_1$ represent the rates of 
change of $\mu$ in the absence and presence of packet loss, respectively.
Since 
$\eta_1 \geq r(A) \geq 1$, it follows that 
$\mu$ increases when packet loss occurs.

The following theorem shows that 
the closed-loop system 
\eqref{eq:closed_dynamics} 
achieves exponential convergence if
the quantization error bounds and the duration
bound of packet loss are small enough 
that the following two properties
are satisfied: (i)
$\mu$ decreases in the absence of packet loss; and
(ii) the duration of packet loss is such that $\mu$ decreases on average.
\begin{theorem}
	\label{thm:exp_conv}
	Suppose that 
	Assumptions~\ref{assump:power_stability} and \ref{assump:duration} hold.
	Let the state and the operators of the closed-loop system 
	\eqref{eq:closed_dynamics} 
	be given by \eqref{eq:zero_parameter}.
	Given an initial state bound $E_0 >0$, let the zoom parameters
	$\mu_{\inc}$ and $\mu_{\out}$ be defined as in \eqref{eq:E_difference_eq}.
	Assume that the quantization error bounds $\Delta_{\inc}, \Delta_{\out}\geq0$ and the duration
	bound $\nu \in [0,1]$
	of packet loss  satisfy
	\begin{align}
		\label{eq:ex_conv_cond}
		\eta_0  < 1 \quad \text{and} \quad 
		\nu < \frac{\log (1/\eta_0  )}{ \log(\eta_1 / \eta_0)},
	\end{align}
	where $\eta_0= \eta_0(\Delta_{\inc},\Delta_{\out}) $
	and $\eta_1 = \eta_1(\Delta_{\inc}) $ are defined by
	\eqref{eq:eta_def_subeq}.
	Then the following statements hold:
	\begin{enumerate}
		\renewcommand{\labelenumi}{\alph{enumi})}
		\item
		The closed-loop state $z$ satisfies 
		\[
		\|z(k)\|_Z \leq \mu(k)
		\] 
		for all $k \in \mathbb{N}_0$.
		\item
		There exist constants 
		$\Omega_{\mu} \geq 1$ and $\gamma \in (0,1)$, independent of $E_0$, such that 
		\[
		\mu(k) \leq \Omega_{\mu} \gamma^k \mu(0)
		\] 
		for all $k \in \mathbb{N}_0$. 
	\end{enumerate}
	In particular, the closed-loop system \eqref{eq:closed_dynamics} 
	achieves exponential convergence with initial state bound $E_0$.
\end{theorem}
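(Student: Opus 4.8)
The plan is to establish the two itemized statements separately and then read off exponential convergence from Definition~\ref{def:exp_conv}: statement (a) carries all the analytic content, while statement (b) is an elementary consequence of Assumption~\ref{assump:duration} together with the standing hypothesis \eqref{eq:ex_conv_cond}. For (a) I would argue by strong induction on $k$ that $\|z(k)\|_Z \le \mu(k)$. The first task at each step is to preclude quantizer saturation. Since $u(k) = \mathcal{C}_{\inc} z(k)$ with $\|\mathcal{C}_{\inc}\|_{\mathcal{L}(Z,U)} = \|R\|_{\mathcal{L}(X_{\rm c},U)}$, the induction hypothesis $\|z(k)\|_Z \le \mu(k)$ gives $\|u(k)\|_U \le \mu_{\inc}(k)$, so \eqref{eq:qe_u} yields $\|q_{\inc}(k)-u(k)\|_U \le \Delta_{\inc}\mu_{\inc}(k)$. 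Substituting this into $y(k) = \mathcal{C}_{\out} z(k) + D(q_{\inc}(k)-u(k))$ and using the definition of $\mu_{\out}(k)$ in \eqref{eq:E_difference_eq} gives $\|y(k)\|_Y \le \mu_{\out}(k)$, so \eqref{eq:qe_y} applies as well. Hence, as long as the invariant holds up to time $k$, both error bounds in \eqref{eq:qe} are in force, and for $\theta(k)=0$ the perturbation term in \eqref{eq:closed_dynamics} obeys $\|\mathcal{B}_{\inc,0}(q_{\inc}(k)-u(k)) + \mathcal{B}_{\out,0}(q_{\out}(k)-y(k))\|_Z \le \kappa_0\mu(k)$, where $\kappa_0$ denotes the bracketed quantization coefficient in \eqref{eq:eta0_def}, so that $\eta_0 = \rho + M\kappa_0$.

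The delicate point, which I expect to be the main obstacle, is that a naive single-step estimate of the packet-loss-free dynamics produces the factor $\|\mathcal{A}_0\|_{\mathcal{L}(Z)}$, whereas $\eta_0$ carries the strictly sharper rate $\rho$ from Assumption~\ref{assump:power_stability}. To recover $\rho$ one must apply the power-stability bound \eqref{eq:Aid_power} over a whole packet-loss-free interval rather than step by step. I would therefore partition time at the packet-loss instants and, on each maximal interval $[a,b)$ with $\theta(\cdot)=0$, use the variation-of-constants representation $z(k) = \mathcal{A}_{\rm id}^{\,k-a}z(a) + \sum_{\ell=a}^{k-1}\mathcal{A}_{\rm id}^{\,k-1-\ell}\bigl(\mathcal{B}_{\inc,0}(q_{\inc}(\ell)-u(\ell)) + \mathcal{B}_{\out,0}(q_{\out}(\ell)-y(\ell))\bigr)$, recalling $\mathcal{A}_0 = \mathcal{A}_{\rm id}$. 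Inserting \eqref{eq:Aid_power}, the bound $\kappa_0\mu(\ell)$ above, and $\mu(\ell)=\eta_0^{\,\ell-a}\mu(a)$, the geometric sum $\sum_{\ell=a}^{k-1}\rho^{\,k-1-\ell}\eta_0^{\,\ell-a}$ equals $(\eta_0^{\,k-a}-\rho^{\,k-a})/(\eta_0-\rho)$; since $M\kappa_0=\eta_0-\rho$, the homogeneous and forced contributions collapse to $\eta_0^{\,k-a}\mu(a)=\mu(k)$ \emph{provided} the interval is entered with the sharp bound $\|z(a)\|_Z \le \mu(a)/M$.

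This sharp entry bound is exactly what a single-step estimate at a packet-loss instant supplies. When $\theta(k)=1$ one has $\mathcal{B}_{\out,1}=0$, so the dynamics reduce to $z(k+1)=\mathcal{A}_1 z(k) + B(q_{\inc}(k)-u(k))$ and hence $\|z(k+1)\|_Z \le (\|\mathcal{A}_1\|_{\mathcal{L}(Z)}+\Delta_{\inc}\|B\|_{\mathcal{L}(U,X)}\|R\|_{\mathcal{L}(X_{\rm c},U)})\mu(k) = (\eta_1/M)\mu(k)=\mu(k+1)/M$; the same sharp bound holds at $k=0$ because $\mu(0)=ME_0 \ge M\|z^0\|_Z$. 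Thus every packet-loss-free interval is entered with a factor $1/M$ of slack, the interval estimate closes to $\|z(k)\|_Z \le \mu(k)$, and each packet-loss step both preserves the invariant and re-establishes the slack for the next interval. I would make this rigorous by strengthening the induction hypothesis to include the auxiliary claim that $\|z(k)\|_Z \le \mu(k)/M$ whenever $k=0$ or $\theta(k-1)=1$, which closes the induction for (a).

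For (b) I would iterate \eqref{eq:E_difference_eq} to obtain $\mu(k)=\eta_0^{\,k-\Theta(k)}\eta_1^{\,\Theta(k)}\mu(0)=\eta_0^{\,k}(\eta_1/\eta_0)^{\Theta(k)}\mu(0)$. Because $\eta_1 \ge r(A) \ge 1 > \eta_0$, the base $\eta_1/\eta_0$ exceeds $1$, so Assumption~\ref{assump:duration} gives $(\eta_1/\eta_0)^{\Theta(k)} \le (\eta_1/\eta_0)^{\Xi}\bigl((\eta_1/\eta_0)^{\nu}\bigr)^{k}$. Setting $\gamma \coloneqq \eta_0^{\,1-\nu}\eta_1^{\,\nu}$ and $\Omega_\mu \coloneqq (\eta_1/\eta_0)^{\Xi}\ge 1$ yields $\mu(k) \le \Omega_\mu\gamma^{k}\mu(0)$, and taking logarithms shows $\gamma<1$ to be equivalent to the second inequality in \eqref{eq:ex_conv_cond}; both constants are manifestly independent of $E_0$. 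Combining (a) and (b) gives $\|z(k)\|_Z \le \mu(k) \le \Omega_\mu\gamma^{k}ME_0$ for every $z^0$ with $\|z^0\|_Z \le E_0$, so Definition~\ref{def:exp_conv} holds with $\Omega = M\Omega_\mu$ and the same $\gamma$.
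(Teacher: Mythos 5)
Your proposal is correct, but it proves statement~a) by a genuinely different route than the paper. The paper's proof handles the difficulty you identify (recovering the rate $\rho$ rather than $\|\mathcal{A}_0\|_{\mathcal{L}(Z)}$ in a one-step estimate) by renorming: it introduces the adapted norm $|z|_Z \coloneqq \sup_{k \in \mathbb{N}_0}\|\rho^{-k}\mathcal{A}_0^k z\|_Z$, which satisfies $\|z\|_Z \leq |z|_Z \leq M\|z\|_Z$ and $|\mathcal{A}_0 z|_Z \leq \rho\,|z|_Z$, and then runs a plain one-step induction on $|z(k)|_Z \leq \mu(k)$; the factor $M$ in \eqref{eq:eta_def_subeq} enters through the norm-equivalence constants rather than through \eqref{eq:Aid_power} directly. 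You instead keep the original norm, partition time at packet-loss instants, and use variation of constants over each maximal loss-free interval together with the strengthened induction hypothesis $\|z(a)\|_Z \leq \mu(a)/M$ at entry instants; the exact identity $M\kappa_0 = \eta_0 - \rho$ then collapses the geometric sum to give $\|z(k)\|_Z \leq \eta_0^{\,k-a}\mu(a) = \mu(k)$, and the packet-loss step regenerates the $1/M$ slack since $\eta_1$ carries a factor $M$ in \eqref{eq:eta1_def}. Both arguments yield identical constants, and your part~b) coincides with the paper's. The renorming proof is shorter, needs no interval bookkeeping, and the same device is reused later in the paper (Lemma~\ref{lem:Phi_k_norm_bound}); your argument avoids introducing an equivalent norm, makes transparent that $\eta_0 = \rho + M\kappa_0$ is exactly the rate at which the forced response telescopes, and delivers the slightly sharper intermediate bound $\|z\|_Z \leq \mu/M$ at time $0$ and immediately after each dropout. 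Two small points to tidy up: treat the degenerate case $\eta_0 = \rho$ (i.e., $\kappa_0 = 0$) separately, since your closed form for the geometric sum divides by $\eta_0 - \rho$ (the forcing term vanishes there, so the bound is immediate); and note that consecutive dropouts are covered because $\mu(k+1)/M \leq \mu(k+1)$, so the invariant needed to rule out input-quantizer saturation at the next loss step still holds.
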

\begin{proof}
	The last statement on the exponential convergence 
	property of the closed-loop system immediately follows from combining statements~a) and b). In the following,
	we will prove statements~a) and b).
	Assumption~\ref{assump:power_stability} is necessary
	for the first condition in \eqref{eq:ex_conv_cond}, i.e., $\eta_0<1$,
	to hold.
	The conditions in \eqref{eq:ex_conv_cond} 
	and Assumption~\ref{assump:duration}
	are used only for statement~b).
	
	Proof of statement~a).
	We introduce the following function $|\cdot|_Z\colon Z \to \mathbb{R}_+$, as in the proofs of
	\cite[Lemma~II.1.5]{Eisner2010} and \cite[Theorem~5.8]{Wakaiki2018_EVC}:
	\[
	|z|_Z \coloneqq \sup_{k \in \mathbb{N}_0} \|\rho^{-k}\mathcal{A}_{0}^k z\|_Z,\quad z \in Z.
	\]
	It is a norm on $Z$ and satisfies 
	\begin{subequations}
		\label{eq:new_norm_prop}
		\begin{gather}
			\|z\|_Z \leq |z|_Z \leq M \|z\|_Z, \label{eq:new_norm_prop1}\\
			|\mathcal{A}_0 z|_Z \leq \rho |z|_Z.\label{eq:new_norm_prop2}
		\end{gather}
	\end{subequations}
	for all $z \in Z$.
	These properties
	can be proved in the same way as in the finite-dimensional 
	case~\cite[Lemma~3.3]{Wakaiki2023}.
	
	Let the initial value $z^0 \in Z$
	of the closed-loop state
	satisfy $\|z^0\|_Z \leq E_0$.
	By the property \eqref{eq:new_norm_prop1} of the norm $|\cdot|_Z$,
	it is enough to prove that 
	\begin{equation}
		\label{eq:z_bound}
		|z(k)|_Z \leq \mu(k)
	\end{equation} 
	for all $k \in \mathbb{N}_0$.
	Since the property \eqref{eq:new_norm_prop1} gives 
	\[
	|z^0|_Z \leq M\|z^0\|_Z \leq ME_0 = \mu(0),
	\] 
	it follows that 
	\eqref{eq:z_bound} is true for $k=0$.
	
	We now proceed by induction and assume  that
	\eqref{eq:z_bound} is true  for some $k \in \mathbb{N}_0$.  
	In order to use \eqref{eq:qe},
	we will show that $\|u(k)\|_{U}\leq \mu_{\inc}(k)$ and
	$\|y(k)\|_{Y}\leq \mu_{\out}(k)$.
	By the second equation of \eqref{eq:closed_dynamics}, 
	\begin{equation*}
		\|u(k)\|_{U} \leq \|R\|_{\mathcal{L}(X_{\rm c},U)}
		\hspace{1pt} \|z(k)\|_Z.
	\end{equation*}
	Since 
	$\|z(k)\|_Z \leq \mu(k)$ by the inductive hypothesis and 
	the property \eqref{eq:new_norm_prop1},
	it follows that 
	$
	\|u(k)\|_{U}\leq  \mu_{\inc}(k).
	$
	Combining this and \eqref{eq:qe_u} with
	the third equation of  \eqref{eq:closed_dynamics}, we derive
	\begin{equation*}
		\|y(k)\|_{Y}\leq 
		\|\mathcal{C}_{\out}\|_{\mathcal{L}(Z,Y)} \hspace{1pt} \|z(k)\|_Z +  \Delta_{\inc} \|D\|_{\mathcal{L}(U,Y)} \mu_{\inc}(k).
	\end{equation*}
	Therefore,
	$
	\|y(k)\|_{Y} \leq  \mu_{\out}(k)
	$ holds.
	
	Let us 
	consider the case $\theta(k) = 0$.
	Applying the 
	properties \eqref{eq:new_norm_prop1} and  \eqref{eq:new_norm_prop2} to the first 
	equation of  \eqref{eq:closed_dynamics}, we obtain
	\begin{align*}
		|z(k+1)|_Z &\leq \rho |z(k)|_Z + 
		M\|\mathcal{B}_{\inc,0}\|_{\mathcal{L}(U,Z)} \hspace{1pt} 
		\|q_{\inc}(k) - u(k)\|_{U}  + 
		M\|\mathcal{B}_{\out,0}\|_{\mathcal{L}(Y,Z)} \hspace{1pt} 
		\|q_{\out}(k) - y(k)\|_{Y}.
	\end{align*}
	Hence, \eqref{eq:qe} yields
	\[
	|z(k+1)|_Z \leq 
	\eta_{0}
	\mu(k) = \mu(k+1).
	\]

	Now we consider the case $\theta(k) = 1$. By the first equation of  \eqref{eq:closed_dynamics} and the property~\eqref{eq:new_norm_prop1},
	we have
	\begin{align*}
		|z(k+1)|_Z &\leq  M\|\mathcal{A}_1\|_{\mathcal{L}(Z)}\hspace{1pt} |z(k)|_Z   + M\|\mathcal{B}_{\inc,1}\|_{\mathcal{L}(U,Z)}  \hspace{1pt} 
		\|q_{\inc}(k) - u(k)\|_{U}.
	\end{align*}
	This together with \eqref{eq:qe_u} 
	gives
	\[
	|z(k+1)|_Z  \leq \eta_{1} \mu(k) = \mu(k+1).
	\]

	Proof of statement~b).
	Using
	$\eta_0 < 1 \leq  \eta_1$,
	we obtain $\eta_1/\eta_0 > 1$.
	By Assumption~\ref{assump:duration}, 
	\begin{align*}
		\mu(k) = \eta_1^{\Theta(k)} \eta_0^{k - \Theta(k)} \mu(0) 
		\leq \left( \frac{\eta_1}{\eta_0} \right)^{\Xi}  \left( 
		\left( \frac{\eta_1}{\eta_0} \right)^{\nu} \eta_0
		\right)^k \mu(0)
	\end{align*}
	for all $k \in \mathbb{N}$.
	Since
	\[
	\left( \frac{\eta_1}{\eta_0} \right)^{\nu} \eta_0 < 1
	\quad 
	\Leftrightarrow
	\quad 
	\nu < \frac{\log (1/\eta_0  )}{ \log(\eta_1 / \eta_0)},
	\]
	it follows that statement~b) holds. 
\end{proof}

\begin{remark}
	The convergence of the zoom parameters to zero may not be practical 
	(e.g., due to computational limitations).
	In such a case, the zoom parameters need to be lower-bounded by a strictly 
	positive constant;
	see \cite[Remark~3]{Fradkov2009} and \cite{Fradkov2010}.
	The proposed update rule can be easily  modified as follows:
	Let $\delta >0$, and define 
	\begin{equation*}
		\mu(k+1) \coloneqq \eta_{\theta(k)} \mu(k) + \delta
	\end{equation*}
	for $k \in \mathbb{N}$. 
	Statement~a) follows from the same argument.
	To obtain an upper bound on  $\mu$ as in statement~b), 
	we define $\Theta(k_1,k_2)$ by
	\[
	\Theta(k_1,k_2) \coloneqq \sum_{\ell=k_1}^{k_2-1} \theta(\ell)
	\]
	for $k_1,k_2 \in \mathbb{N}_0$ with $k_1 < k_2$, and make 
	the following slightly
	stronger assumption on packet loss 
	than Assumption~\ref{assump:duration}:
	There exist $\Xi \geq 0$ and $\nu \in [0,1]$ such that 
	$\Theta(k_1,k_2)$ satisfies
	\[
	\Theta(k_1,k_2) \leq \Xi + \nu (k_2-k_1)
	\]
	for all $k_1,k_2 \in \mathbb{N}_0$ with $k_1 < k_2$.
	Then a routine calculation shows that  	for all $k \in \mathbb{N}$,
	\begin{align*}
		\mu(k) 
		&\leq 
		\Omega_{\mu} \gamma^k \mu(0) + \left(
		1+\frac{\Omega_{\mu }\gamma}{1-\gamma}
		\right)\delta,
	\end{align*}
	where $\Omega_{\mu} \coloneqq (\eta_1/\eta_0)^{\Xi}$ and
	$\gamma \coloneqq (\eta_1/\eta_0)^{\nu} \eta_0$. 
	\hspace*{\fill} $\triangle$ 
\end{remark}

\subsection{Convergence result for hold compensation strategy}
\label{sec:exp_conv_analysis_hold}
We present an analogous convergence result 
for controllers implementing
the hold compensation strategy.
Consider the controller given by
\begin{equation}
	\label{eq:controller_hold_comp}
	\left\{
	\begin{aligned}
		\begin{bmatrix}
			x_{\rm c}(k+1) \\ \hat q_{\rm \out}(k+1)
		\end{bmatrix}
		&= 
		\begin{bmatrix}
			P & \theta(k)Q \\
			0 & \theta(k) I 
		\end{bmatrix} 
		\begin{bmatrix}
			x_{\rm c}(k) \\ \hat q_{\rm \out}(k) 
		\end{bmatrix}  + 
		(1-\theta(k))
		\begin{bmatrix}
			Q \\ I
		\end{bmatrix}
		q_{\out}(k) ,\\
		u(k) &= 
		\begin{bmatrix}
			R & 0
		\end{bmatrix}
		\begin{bmatrix}
			x_{\rm c}(k) \\ \hat q_{\rm \out}(k) 
		\end{bmatrix}
	\end{aligned}
	\right.
\end{equation}
for $k \in \mathbb{N}_0$,
where 
$x_{\rm c}(0) = x^0_{\rm c} \in X_{\rm c}$ and 
$\hat q_{\rm \out}(0) = \hat q_{\rm \out}^{\,0} \in Y$.
The auxiliary  state $\hat q_{\rm \out} (k)\in Y$ represents the last received $q_{\rm \out}$
that the controller keeps at time $k$.
Note that 
we do not have to use a switching operator $P_{\theta(k)}$ 
for the hold compensation strategy even
when the controller contains an observer.

Define $Z \coloneqq X \times X_{\rm c} \times Y$.
The closed-loop system can be written in the form \eqref{eq:closed_dynamics} for $k \in \mathbb{N}_0$, where
for $\theta \in \{0,1\}$,
\begin{subequations}
	\label{eq:hold_parameter}
	\begin{alignat}{2}
		z(k) &\coloneqq 
		\begin{bmatrix}
			x(k) \\ x_{\rm c}(k) \\ \hat q_{\rm \out}(k) 
		\end{bmatrix}, & & \hspace{-90pt}z(0) = z^0 \coloneqq 
		\begin{bmatrix}
			x^0 \\ x_{\rm c}^0 \\ \hat q_{\rm \out}^{\,0}
		\end{bmatrix},\\
		\mathcal{A}_{\theta} &\coloneqq
		\begin{bmatrix}
			A & BR & 0 \\ (1-\theta)QC & P+(1-\theta)QDR & \theta Q \\
			(1-\theta)C &  (1-\theta)DR &  \theta I
		\end{bmatrix}, \label{eq:A_eta_def2} &&\\
		\mathcal{B}_{\inc,\theta} &\coloneqq
		\begin{bmatrix}
			B \\ (1-\theta)QD \\ (1-\theta)D 
		\end{bmatrix},& & \hspace{-90pt}
		\mathcal{B}_{\out,\theta} \coloneqq
		\begin{bmatrix}
			0 \\ (1-\theta)Q \\ (1-\theta)I
		\end{bmatrix},\label{eq:B_eta_def2}\\
		\mathcal{C}_{\inc} &\coloneqq
		\begin{bmatrix}
			0 & R & 0
		\end{bmatrix}, && \hspace{-82.2pt}
		\mathcal{C}_{\out} \coloneqq
		\begin{bmatrix}
			C & DR & 0
		\end{bmatrix}.\label{eq:C_eta_def2}
	\end{alignat}
\end{subequations}	
If Assumption~\ref{assump:power_stability} holds, then
the operator 
$\mathcal{A}_0$ defined by \eqref{eq:A_eta_def2} 
is power stable with the same decay rate $\rho \in (0,1)$ as 
the ideal closed-loop operator $\mathcal{A}_{\rm id}$.
Let $M_{\rm h} \geq 1$ satisfy
\[
\|\mathcal{A}_0^k\| \leq M_{\rm h} \rho^k
\] 
for all $k \in \mathbb{N}_0$.
The following result can be proved in the same way
as Theorem~\ref{thm:exp_conv}, and therefore we omit the proof.
\begin{theorem}
	\label{thm:exp_conv2}
	Suppose that 
	Assumptions~\ref{assump:power_stability} and \ref{assump:duration} hold. 
	Let the state and the operators of the closed-loop system 
	\eqref{eq:closed_dynamics} 
	be given by \eqref{eq:hold_parameter}. Given an initial state bound $E_0 >0$,
	let the zoom parameters
	$\mu_{\inc}$ and $\mu_{\out}$ be defined as  in \eqref{eq:E_difference_eq} with 
	$\|Q\|_{\mathcal{L}(Y,X_{\rm c})}$ replaced by 
	\[
	\left\|
	\begin{bmatrix}
		Q \\ I
	\end{bmatrix}
	\right\|_{\mathcal{L}(Y,X_{\rm c}\times Y)}
	\]
	and $M$ replaced by $M_{\rm h}$.
	Assume that the quantization error bounds $\Delta_{\inc}, \Delta_{\out}\geq 0$ and 
	the duration bound $\nu \in [0,1]$  of packet loss 
	satisfy the conditions given in  \eqref{eq:ex_conv_cond}.
	Then the conclusions of Theorem~\ref{thm:exp_conv} hold.
\end{theorem}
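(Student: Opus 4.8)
The plan is to mirror the two-part argument used for Theorem~\ref{thm:exp_conv}, exploiting the fact that the hold-compensation closed loop is again of the form \eqref{eq:closed_dynamics}, now with the operators \eqref{eq:hold_parameter} acting on the enlarged state space $Z = X \times X_{\rm c} \times Y$. The only substantive preliminary is to confirm that the operator-norm quantities entering $\eta_0$ and $\eta_1$ in \eqref{eq:eta_def_subeq} are the ones built from the hold-case operators. The key observation is that, for the hold compensation, the operator $\mathcal{B}_{\out,0}$ in \eqref{eq:B_eta_def2} has norm
\[
\|\mathcal{B}_{\out,0}\|_{\mathcal{L}(Y,Z)} = \left\|\begin{bmatrix} Q \\ I \end{bmatrix}\right\|_{\mathcal{L}(Y,X_{\rm c} \times Y)},
\]
because the vanishing $X$-component contributes nothing to the product norm. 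This is precisely the replacement $\|Q\| \to \|[Q;I]\|$ prescribed in the statement, so the $\Delta_{\out}$ and $\Delta_{\inc}\Delta_{\out}$ terms of $\eta_0$ take exactly the form used in the induction below; the remaining norms $\|\mathcal{B}_{\inc,0}\|$, $\|\mathcal{C}_{\out}\|$, $\|B\|$, $\|R\|$, and $\|\mathcal{A}_1\|$ are read off directly from \eqref{eq:hold_parameter}.

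For statement a) I would first record that $\mathcal{A}_0$ from \eqref{eq:A_eta_def2} is power stable with decay rate $\rho$ and constant $M_{\rm h}$, as noted before the theorem, and then introduce the equivalent norm $|z|_Z \coloneqq \sup_{k \in \mathbb{N}_0} \|\rho^{-k}\mathcal{A}_0^k z\|_Z$. Exactly as in the proof of Theorem~\ref{thm:exp_conv}, this norm satisfies $\|z\|_Z \leq |z|_Z \leq M_{\rm h}\|z\|_Z$ and $|\mathcal{A}_0 z|_Z \leq \rho |z|_Z$, the analogues of \eqref{eq:new_norm_prop} with $M$ replaced by $M_{\rm h}$. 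The induction then proceeds verbatim: the base case $k=0$ follows from $|z^0|_Z \leq M_{\rm h}\|z^0\|_Z \leq M_{\rm h} E_0 = \mu(0)$; assuming $|z(k)|_Z \leq \mu(k)$, one verifies $\|u(k)\|_U \leq \mu_{\inc}(k)$ and $\|y(k)\|_Y \leq \mu_{\out}(k)$ from the output equations of \eqref{eq:closed_dynamics}, invokes \eqref{eq:qe}, and splits on $\theta(k)$. In the case $\theta(k)=0$ the contraction estimate yields $|z(k+1)|_Z \leq \eta_0 \mu(k) = \mu(k+1)$, and in the case $\theta(k)=1$ the bound $|z(k+1)|_Z \leq M_{\rm h}\|\mathcal{A}_1\|_{\mathcal{L}(Z)}\, |z(k)|_Z + M_{\rm h}\|\mathcal{B}_{\inc,1}\|_{\mathcal{L}(U,Z)}\|q_{\inc}(k)-u(k)\|_U$ gives $|z(k+1)|_Z \leq \eta_1 \mu(k) = \mu(k+1)$.

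Statement b) requires no new idea: once $\eta_0 < 1 \leq \eta_1$ is in force under \eqref{eq:ex_conv_cond}, the identity $\mu(k) = \eta_1^{\Theta(k)}\eta_0^{k-\Theta(k)}\mu(0)$ together with Assumption~\ref{assump:duration} gives the bound $\mu(k) \leq (\eta_1/\eta_0)^{\Xi}\big((\eta_1/\eta_0)^{\nu}\eta_0\big)^k\mu(0)$, and the second condition in \eqref{eq:ex_conv_cond} makes the geometric factor strictly less than one. Combining a) and b) yields exponential convergence with initial state bound $E_0$. I expect the only real point of care to be the bookkeeping of the block operator norms on $Z = X \times X_{\rm c} \times Y$ — in particular confirming that each coefficient produced by the induction matches the substituted $\eta_0$ and $\eta_1$ — rather than any new analytic difficulty, which is why omitting the proof is justified.
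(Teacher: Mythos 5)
Your proposal is correct and takes essentially the same route the paper intends: the paper explicitly omits this proof, stating that it proceeds in the same way as Theorem~\ref{thm:exp_conv}. Your bookkeeping of the hold-case operator norms---in particular that $\|\mathcal{B}_{\out,0}\|_{\mathcal{L}(Y,Z)}$ equals the norm of the column operator with entries $Q$ and $I$ and that $\|\mathcal{B}_{\inc,1}\|_{\mathcal{L}(U,Z)}=\|B\|_{\mathcal{L}(U,X)}$---combined with the renorming via $M_{\rm h}$ and the unchanged induction and averaging arguments, is precisely that adaptation.
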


\subsection{
	Simultaneous packet loss in
	sensor-to-controller and controller-to-actuator channels}
\label{sec:sim_packet}
In Sections~\ref{sec:cls}--\ref{sec:exp_conv_analysis_hold}, we assumed that 
only the sensor-to-controller channel suffers from packet loss.
Here we briefly discuss the case where 
packets  simultaneously drop
in the sensor-to-controller and controller-to-actuator 
channels, which has been studied, e.g., in \cite{Liu2022}.
Note that in this case, 
the controller does not have to 
transmit notification packets to the actuator, because
the actuator can detect packet loss on its own.
As shown below, the same stability results as in
Theorems~\ref{thm:exp_conv} and \ref{thm:exp_conv2} 
can be obtained by slightly changing
the update rule of the zoom parameters.

\subsubsection{Zero compensation}
Let the controller and the actuator employ the zero compensation strategy.
Then the operators 
$B$ and $D$ in the plant \eqref{eq:plant} 
are replaced with $(1-\theta(k)) B$ and $(1-\theta(k))D$, respectively.
In the case without quantization, 
the closed-loop system with state space 
$Z \coloneqq X \times X_{\rm c}$ 
is given by $z(k+1) = \mathcal{A}_{\theta(k)}z(k)$
for $k \in \mathbb{N}_0$, where
\[
z(k) \coloneqq 
\begin{bmatrix}
	x(k) \\ x_{\rm c}(k)
\end{bmatrix},\hspace{7.8pt}
\mathcal{A}_{\theta} \coloneqq
\begin{bmatrix}
	A & (1-\theta)BR \\ (1-\theta)QC & P_\theta+(1-\theta)QDR
\end{bmatrix}
\]
for $\theta \in \{0,1 \}$.
The major difference from 
the case  in 
Sections~\ref{sec:cls}--\ref{sec:exp_conv_analysis_hold} is that 
the plant output
$y(k)$ depends on $\theta(k)$ as follows:
\begin{equation}
	\label{eq:y_simul_packet_loss}
	y(k) = \mathcal{C}_{\out,\theta(k)} z(k) + \mathcal{D}_{\theta(k)} (
	q_{\inc}(k) - u(k)
	),
\end{equation}
where  $\mathcal{C}_{\out,\theta}$
and $\mathcal{D}_{\theta}$ are defined by
\[
\mathcal{C}_{\out,\theta} \coloneqq 
\begin{bmatrix}
	C & (1-\theta) DR
\end{bmatrix},\quad 
\mathcal{D}_{\theta} \coloneqq 
(1-\theta) D
\]
for $\theta \in \{0,1 \}$.
However, since 
\begin{align}
	&\|\mathcal{C}_{\out,0}\|_{\mathcal{L}(Z,Y)}
	+ \Delta_{\inc} \|\mathcal{D}_{0}\|_{\mathcal{L}(U,Y)} \hspace{1pt}
	\|R\|_{\mathcal{L}(X_{\rm c},U)} \geq
	\|\mathcal{C}_{\out,1}\|_{\mathcal{L}(Z,Y)}
	+ \Delta_{\inc} \|\mathcal{D}_{1}\|_{\mathcal{L}(U,Y)} \hspace{1pt}
	\|R\|_{\mathcal{L}(X_{\rm c},U)},
	\label{eq:C_01_zero_comp}
\end{align}
it follows that $\mu_{\out}$ given in \eqref{eq:E_difference_eq}
can be used in this situation as well. Therefore, we obtain
the same results as in Theorem~\ref{thm:exp_conv},
by simply replacing the definition \eqref{eq:eta1_def} of $\eta_1$ with
$
\eta_1 \coloneqq M  \|\mathcal{A}_1\|_{\mathcal{L}(Z)},
$
where the constant $M\geq 1$ is as in Assumption~\ref{assump:power_stability}.

\subsubsection{Hold compensation}
Let the hold compensation strategy be implemented in 
the controller and the actuator. Let $\hat q_{\inc}(k)\in U$
be 
the last received $q_{\inc} $ that the actuator
keeps at time $k$, which is used as an auxiliary plant state 
as in
\eqref{eq:controller_hold_comp}.
The state space $Z$ of the closed-loop system is given by
$Z  \coloneqq X \times U \times X_{\rm c} \times Y$.
When quantization errors are zero, the closed-loop system can be written as
$z(k+1) = \mathcal{A}_{\theta(k)}z(k)$ for $k \in \mathbb{N}_0$,
where
\begin{align*}
	z(k) &\coloneqq
	\begin{bmatrix}
		x(k) \\ \hat{q}_{\inc}(k) \\ x_{\rm c}(k) \\ \hat{q}_{\out}(k)
	\end{bmatrix},\\
	\mathcal{A}_{\theta} &\coloneqq
	\begin{bmatrix}
		A & \theta B & (1-\theta)BR & 0 \\
		0 & \theta I & (1-\theta)R & 0 \\
		(1-\theta)QC & 0 & P+(1-\theta)QDR & \theta Q \\
		(1-\theta )C & 0 & (1-\theta)DR & \theta I
	\end{bmatrix}
\end{align*}
for $\theta \in \{0,1 \}$. 
As in the case of the zero compensation strategy, $y(k)$ is given by
\eqref{eq:y_simul_packet_loss}, where $\mathcal{C}_{\out,\theta}$
and $\mathcal{D}_{\theta}$ are replaced with 
\[
\mathcal{C}_{\out,\theta} \coloneqq
\begin{bmatrix}
	C & \theta D & (1-\theta)DR & 0
\end{bmatrix},\quad 
\mathcal{D}_{\theta} \coloneqq ( 1 - \theta ) D
\]
for $\theta \in \{0,1 \}$.
Since the inequality \eqref{eq:C_01_zero_comp} is not satisfied
in general
for the hold compensation strategy,
we need to modify
the definition of $\mu_{\out}$ as follows:
$
\mu_{\out}(k) \coloneqq \kappa \mu(k),
$
where the constant $\kappa >0$ is defined by
\[
\kappa \coloneqq \max_{\theta \in \{0,1 \}} (
\|\mathcal{C}_{\out,\theta}\|_{\mathcal{L}(Z,Y)}
+ \Delta_{\inc} \|\mathcal{D}_{\theta}\|_{\mathcal{L}(U,Y)} \hspace{1pt}
\|R\|_{\mathcal{L}(X_{\rm c},U)})  .
\]
There exists $M_{\rm h} \geq 1$ such that $\|\mathcal{A}_0\|_{\mathcal{L}(Z)} \leq M_{\rm h}  \rho^k$
for all $k \in \mathbb{N}$, where $\rho \in (0,1)$
is as in Assumption~\ref{assump:power_stability}.
The same results as in
Theorem~\ref{thm:exp_conv2} follow from 
replacing the definition \eqref{eq:eta1_def} of $\eta_1$
with 
$\eta_1 \coloneqq M_{\rm h} \|
\mathcal{A}_1
\|_{\mathcal{L}(Z)} $
and redefining $\eta_{\out}$ as above.

\section{Approximation of operator norms}
\label{sec:norm_approximation}
The quantizer design proposed in 
Section~\ref{sec:Quantization}
requires the norm computation of the operators
representing the system dynamics.
Here we present numerical methods to compute
the norms of those operators, which are based on 
finite-dimensional approximation.
In Section~\ref{sec:NE_by_FDA}, we  state two main results.
Sections~\ref{sec:Proof_of_app1} and 
\ref{sec:Proof_of_app2} are devoted to the proofs of these results,
respectively.

\subsection{Approximation by operators on finite-dimensional spaces}
\label{sec:NE_by_FDA}
Let $X,\,X_{\rm c},\,U$, and $Y$ be nontrivial
real or complex Banach spaces.
We consider operators $A \in \mathcal{L}(X)$, $B \in \mathcal{L}(U,X)$, and 
$C \in \mathcal{L}(X,Y)$. 
For $N \in \mathbb{N}$, 
let $\Pi_N \in \mathcal{L}(X)$ be a projection operator, 
i.e., satisfy $\Pi_N^2 = \Pi_N$, and let
$X_N \coloneqq \Pi_NX$ be finite dimensional.
We make the following assumptions throughout Section~\ref{sec:norm_approximation}.
\begin{assumption}
	\label{assump:A_Pi}
	The operator $A \in \mathcal{L}(X)$ and 
	the sequence $(\Pi_N)_{N \in \mathbb{N}}$
	of the projection operators in  $\mathcal{L}(X)$ satisfy
	\begin{enumerate}
		\renewcommand{\labelenumi}{\alph{enumi})}
		\item $\displaystyle \lim_{N \to \infty}\|(I-\Pi_N)x\|_X = 0$
		for all $x \in X$; \vspace{3pt}
		\item $\displaystyle \lim_{N \to \infty}\|(I-\Pi_N')x'\|_{X'} = 0$
		for all $x' \in X'$; and \vspace{3pt}
		\item
		$\displaystyle \lim_{N \to \infty}\!\| (I- \Pi_N)A\|_{\mathcal{L}(X)} =0 =\!\displaystyle \lim_{N \to \infty}\!\| A(I  - \Pi_N)\|_{\mathcal{L}(X)} $. \vspace{3pt}
	\end{enumerate}
\end{assumption}
\begin{assumption}
	\label{assump:finite_dimensional}
	The Banach spaces $U$ and $Y$  are finite-dimensional.
\end{assumption}

\begin{example}
	\label{ex:diagonalizable}
	A typical example of the pair 
	$A$ and $(\Pi_N)_{N \in \mathbb{N}}$
	satisfying
	Assumption~\ref{assump:A_Pi} can be constructed as follows
	using Riesz-spectral operators \cite[Section~3.2]{Curtain2020}
	or diagonalizable operators \cite[Section~2.6]{Tucsnak2009}:
	Let $X$ be a complex 
	Hilbert space  with inner product $\langle \cdot, \cdot \rangle$, and consider
	$A \in \mathcal{L}(X)$ of the form
	\[
	Ax = \sum_{n=1}^{\infty} \alpha_n \langle x, \psi_n \rangle \phi_n,\quad x \in X,
	\] 
	where $(\alpha_n)_{n \in \mathbb{N}}$ is a sequence of complex numbers,
	$(\phi_n)_{n \in \mathbb{N}}$ is a Riesz basis in $X$, and 
	$(\psi_n)_{n \in \mathbb{N}}$ is a biorthogonal sequence  to $(\phi_n)_{n \in \mathbb{N}}$.
	For $N \in \mathbb{N}$, let $\Pi_N \in \mathcal{L}(X)$ be
	a natural projection associated to the Riesz basis $(\phi_n)_{n \in \mathbb{N}}$, i.e.,
	\begin{equation}
		\label{eq:Pi_ex}
		\Pi_N x \coloneqq \sum_{n=1}^{N} \langle x, \psi_n \rangle \phi_n,\quad  x \in X.
	\end{equation}
	One easily verifies that
	Assumption~\ref{assump:A_Pi} is satisfied
	if
	$\lim_{n \to \infty}\alpha_n = 0$.
	\hspace*{\fill} $\triangle$ 
\end{example}

In the control problem studied in Section~\ref{sec:Quantization},
$X$, $U$, and $Y$  are
the state, input, and output spaces of the plant, respectively, 
and $X_{\rm c}$ is the state space of the controller.
As noted in Remark~\ref{rem:finite_dim_case},
if Assumption~\ref{assump:finite_dimensional} holds, then
we may consider the case where $X_{\rm c}$ is
finite dimensional,
under a reasonable condition on the plant. This allows us
to focus on finite-dimensional
approximation of $X$.

Let
$Z \coloneqq X \times X_{\rm c}$ and 
$Z_N \coloneqq   X_N \times X_{\rm c}$. Motivated by the ideal closed-loop operator 
$\mathcal{A}_{\rm id}$ given in \eqref{eq:A_id_def}, we
define the operator matrix
$\mathcal{A}\in \mathcal{L} (Z)$  by
\begin{equation}
	\label{eq:Acl_def}
	\mathcal{A}\coloneqq 
	\begin{bmatrix}
		A & BR \\
		QC & P
	\end{bmatrix}
\end{equation}
for
$P \in \mathcal{L}(X_{\rm c})$, 
$Q \in \mathcal{L}(U, X_{\rm c})$, and
$R \in  \mathcal{L}(X_{\rm c},Y)$.
As an approximation of $\mathcal{A}$, we consider the operator 
$\mathcal{A}_N \in \mathcal{L}(Z_N)$, $N \in \mathbb{N}$, defined by
\begin{equation}
	\label{eq:ANcl_def}
	\mathcal{A}_N \coloneqq
	\begin{bmatrix}
		A_N & B_NR \\
		QC_N & P
	\end{bmatrix},
\end{equation}
where 
\begin{equation}
	\label{eq:ABCN_def}
	A_N \coloneqq \Pi_N A|_{X_N},\quad 
	B_N \coloneqq \Pi_NB,\quad 
	C_N \coloneqq C|_{X_N}.
\end{equation}
For $N \in \mathbb{N}$,
we define the projection operator $\widetilde \Pi_N \in \mathcal{L}(Z)$ by
\begin{equation}
	\label{eq:tilde_Pi_def}
	\widetilde \Pi_N \coloneqq 
	\begin{bmatrix}
		\Pi_N & 0 \\
		0 & I
	\end{bmatrix}.
\end{equation}
Then $Z_N = \widetilde \Pi_N Z$.
Since we have from 
Assumption~\ref{assump:A_Pi}.a) that 
$\sup_{N \in \mathbb{N}} \|\Pi_Nx\| < \infty$ for all $x \in X$,
the uniform boundedness principle yields
$M_{\Pi}
\coloneqq \sup_{N \in \mathbb{N}} \|\Pi_N\|_{\mathcal{L}(X)} < \infty$. We also have that 
\begin{equation}
	\label{eq:tilde_Pi_bound}
	\|\widetilde \Pi_N\|_{\mathcal{L}(Z)} \leq M_{\Pi}
\end{equation} 
for all $N \in \mathbb{N}$.

Now we study the problem of approximating
operator norms.
This type of approximation has also been studied in the context of 
the Galerkin
finite element approximation for second-order elliptic boundary value problems; see, e.g., \cite[Theorem~2.9]{Kinoshita2020} and \cite[Theorem~3]{Kinoshita2023}.
For the approximation of the norm of $B$, we have  
$B - B_N = (I - \Pi_N)B$.
Since $B$ is of finite rank, it follows from Assumption~\ref{assump:A_Pi}.a) that
\begin{equation}
	\label{eq:B_PiN_conv}
	\lim_{N \to \infty}\|(I-\Pi_N)B \|_{\mathcal{L}(U,X)} = 0.
\end{equation}
However,
the operators
$\mathcal{A} - \mathcal{A}_N$ and $C - C_N$ are not well-defined. Indeed,
$\mathcal{A}$ is  an operator on $Z$, while
$\mathcal{A}_N$ is on $Z_N$.
Similarly, the domain $X$ of $C$ differs from the domain $X_N$ of $C_N$.
Despite the difference of the domains, we can prove that
the norms of $\mathcal{A}_N$ and $C_N$ converge to
those of $\mathcal{A}$ and $C$, respectively, by
using the technique developed for \cite[Theorem~2.9]{Kinoshita2020}.
The proof of the following  theorem is given in Section~\ref{sec:Proof_of_app1}.
\begin{theorem}
	\label{thm:norm_conv}
	Suppose that Assumptions~\ref{assump:A_Pi}  and \ref{assump:finite_dimensional} hold.
	For $A \in \mathcal{L}(X)$, $B \in \mathcal{L}(U,X)$, and
	$ C \in \mathcal{L}(X, Y)$, define their approximations $A_N$, $B_N$, and 
	$C_N$ by \eqref{eq:ABCN_def}, and
	define
	the operator $\mathcal{A}$ and its approximation
	$\mathcal{A}_N$ by \eqref{eq:Acl_def} and \eqref{eq:ANcl_def},
	respectively.
	Then the following statements hold:
	\begin{enumerate}
		\renewcommand{\labelenumi}{\alph{enumi})}
		\item
		$\displaystyle 	\|
		\mathcal{A}^k
		\|_{\mathcal{L}(Z)} = 
		\lim_{N \to \infty}\|
		\mathcal{A}_N^k
		\|_{\mathcal{L}(Z_N)}$ for all $k \in \mathbb{N}$.
		\vspace{3pt}
		\item
		$\displaystyle 	\|B\|_{\mathcal{L}(U, X)}  =  \lim_{N\to\infty}
		\|B_N\|_{\mathcal{L}(U, X_N)}$.
		\vspace{3pt}
		\item
		$\displaystyle \|C\|_{\mathcal{L}(X,Y)}  =  \lim_{N\to\infty}
		\|C_N\|_{\mathcal{L}(X_N,Y)} $.
		\vspace{3pt}
	\end{enumerate}
\end{theorem}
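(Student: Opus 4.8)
The plan is to prove each identity by squeezing the $\liminf$ and $\limsup$ of the approximate norm between copies of the true norm. Statements~b) and c) are quick. For b), since $B_N=\Pi_N B$ and $X_N$ carries the norm of $X$, we have $\|B_N\|_{\mathcal{L}(U,X_N)}=\|\Pi_N B\|_{\mathcal{L}(U,X)}$, and the reverse triangle inequality gives $\bigl|\,\|\Pi_N B\|-\|B\|\,\bigr|\le\|(I-\Pi_N)B\|_{\mathcal{L}(U,X)}\to0$ by \eqref{eq:B_PiN_conv}. For c), $C_N=C|_{X_N}$ is a restriction, so $\|C_N\|_{\mathcal{L}(X_N,Y)}\le\|C\|_{\mathcal{L}(X,Y)}$ for every $N$, whence $\limsup_N\|C_N\|\le\|C\|$; for the reverse, given $\varepsilon>0$ I would take $x\in X$ with $\|x\|=1$ and $\|Cx\|\ge\|C\|-\varepsilon$, feed $\Pi_N x/\|\Pi_N x\|\in X_N$ into $C_N$, and use $\Pi_N x\to x$ (Assumption~\ref{assump:A_Pi}.a)) together with continuity of $C$ and of the norm to obtain $\liminf_N\|C_N\|\ge\|Cx\|\ge\|C\|-\varepsilon$.

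The heart of the matter is a). The obstruction flagged in the text—that $\mathcal{A}-\mathcal{A}_N$ is meaningless because the two operators live on $Z$ and $Z_N$—I would circumvent by transporting everything to the common space $Z$ through the projection $\widetilde\Pi_N$ of \eqref{eq:tilde_Pi_def}. A direct block computation shows $\mathcal{A}_N w=\widetilde\Pi_N\mathcal{A}\,w$ for every $w\in Z_N$, and since $\widetilde\Pi_N\mathcal{A}\,w$ again lies in $Z_N$, iteration yields the key identity
\begin{equation*}
\mathcal{A}_N^k z_N=(\widetilde\Pi_N\mathcal{A})^k z_N,\qquad z_N\in Z_N .
\end{equation*}
Hence $\|\mathcal{A}_N^k\|_{\mathcal{L}(Z_N)}=\sup\{\|(\widetilde\Pi_N\mathcal{A})^k z_N\|_Z:z_N\in Z_N,\ \|z_N\|_Z=1\}$, and it suffices to compare the operator $(\widetilde\Pi_N\mathcal{A})^k$ on $Z$ with $\mathcal{A}^k$.

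The linchpin is the norm convergence $\bigl\|(\widetilde\Pi_N\mathcal{A})^k-\mathcal{A}^k\bigr\|_{\mathcal{L}(Z)}\to0$, which I would derive from the telescoping identity
\begin{equation*}
(\widetilde\Pi_N\mathcal{A})^k-\mathcal{A}^k=-\sum_{j=0}^{k-1}(\widetilde\Pi_N\mathcal{A})^j(I-\widetilde\Pi_N)\mathcal{A}^{k-j},
\end{equation*}
in which each factor $(\widetilde\Pi_N\mathcal{A})^j$ is bounded by $(M_\Pi\|\mathcal{A}\|)^j$ thanks to \eqref{eq:tilde_Pi_bound}. The decisive step is the auxiliary claim $\|(I-\widetilde\Pi_N)\mathcal{A}^m\|_{\mathcal{L}(Z)}\to0$ for each fixed $m\ge1$. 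Here the block-triangular structure of $\mathcal{A}$ is essential: because the first block row of $\mathcal{A}^m$ equals $[\,A\ \ BR\,]\mathcal{A}^{m-1}$, every entry of that row begins, on the left, with $A$ or with $B$, so that $(I-\widetilde\Pi_N)\mathcal{A}^m$ factors through $(I-\Pi_N)A$ and $(I-\Pi_N)B$; both tend to $0$ in operator norm by Assumption~\ref{assump:A_Pi}.c) and \eqref{eq:B_PiN_conv}. I expect this to be the main obstacle, precisely because $I-\Pi_N$ converges to $0$ only \emph{strongly} and not in norm, so the convergence must be extracted from the structural factorization rather than from $\Pi_N$ alone.

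With the norm convergence in hand, both inequalities for a) follow. For the upper bound, the estimate $\|(\widetilde\Pi_N\mathcal{A})^k z_N\|_Z\le\|\mathcal{A}^k\|\,\|z_N\|_Z+\|(\widetilde\Pi_N\mathcal{A})^k-\mathcal{A}^k\|\,\|z_N\|_Z$ on $Z_N$ gives $\limsup_N\|\mathcal{A}_N^k\|\le\|\mathcal{A}^k\|$. For the lower bound, given $\varepsilon>0$ I would pick $z\in Z$ with $\|z\|_Z=1$ and $\|\mathcal{A}^k z\|_Z\ge\|\mathcal{A}^k\|-\varepsilon$, and test $\mathcal{A}_N^k$ against $\widetilde\Pi_N z\in Z_N$. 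Writing $(\widetilde\Pi_N\mathcal{A})^k\widetilde\Pi_N z-\mathcal{A}^k z=[(\widetilde\Pi_N\mathcal{A})^k-\mathcal{A}^k]\widetilde\Pi_N z+\mathcal{A}^k(\widetilde\Pi_N z-z)$, the first term vanishes by the norm convergence and the bound \eqref{eq:tilde_Pi_bound}, the second by $\widetilde\Pi_N z\to z$ (Assumption~\ref{assump:A_Pi}.a)); since also $\|\widetilde\Pi_N z\|_Z\to1$, dividing yields $\liminf_N\|\mathcal{A}_N^k\|\ge\|\mathcal{A}^k z\|_Z\ge\|\mathcal{A}^k\|-\varepsilon$. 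Letting $\varepsilon\to0$ and combining with the upper bound completes a).
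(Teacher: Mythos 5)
Your proof is correct, and although the overall strategy coincides with the paper's (a two-sided squeeze, transporting everything to $Z$ via $\widetilde \Pi_N$ from \eqref{eq:tilde_Pi_def}, the identity $\mathcal{A}_N = \widetilde\Pi_N \mathcal{A}|_{Z_N}$, and testing a near-maximizer against $\widetilde\Pi_N z$ for the lower bound), your key lemma is genuinely different and slightly stronger in what it avoids. The paper's Lemma~\ref{lem:A_ANtilPi_power} establishes $\|\mathcal{A}^k - \mathcal{A}_N^k\widetilde\Pi_N\|_{\mathcal{L}(Z)} \to 0$, and the trailing projection matters: already at $k=1$ the difference contains the lower-left block $QC(I-\Pi_N)$, so the paper must invoke Assumption~\ref{assump:A_Pi}.b) and the finite rank of $C'$ (cf.\ \eqref{eq:C_PiN_conv}). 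Your surrogate $(\widetilde\Pi_N\mathcal{A})^k$ has no trailing projection, and your telescoping identity only ever produces the factors $(I-\Pi_N)A$ and $(I-\Pi_N)BR$ through $(I-\widetilde\Pi_N)\mathcal{A}^{k-j} = \bigl[(I-\widetilde\Pi_N)\mathcal{A}\bigr]\mathcal{A}^{k-j-1}$; hence only Assumption~\ref{assump:A_Pi}.c) and \eqref{eq:B_PiN_conv} enter, and your argument shows that statement~a) holds without Assumption~\ref{assump:A_Pi}.b) at all. (The paper's stronger lemma is not redundant in context: it is reused with $k=1$ in the proof of Theorem~\ref{thm:Acl_powered_bound}, where the form $\mathcal{A}_N\widetilde\Pi_N$, acting on all of $Z$, is exactly what is fed into Lemma~\ref{lem:Phi_k_norm_bound}.) Two further local differences: for the upper bound the paper argues by contradiction along a subsequence $(\mathcal{A}_{N_j}^k)$ with inductively defined constants $\delta_\ell$ valid only on $Z_{N_j}$, whereas your bound is direct and cleaner because your norm convergence holds uniformly on $Z$ — the underlying estimates are materially the same; and you prove c) elementarily (the restriction property gives $\limsup_N\|C_N\| \le \|C\|$, the test vectors $\Pi_N x/\|\Pi_N x\|$ give the reverse, using only Assumption~\ref{assump:A_Pi}.a)), whereas the paper extracts c) from a) via the special case $X_{\rm c}=Y$, $A=0$, $P=0$, $Q=I$, $R=0$, $k=1$. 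Your treatment of b) coincides with the paper's.
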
	

\begin{remark}
	\label{rem:norm_computation_extension}
	Theorem~\ref{thm:norm_conv} is not directly applicable to
	the norm computation of the operators
	$\mathcal{B}_{\inc,0}$ and $\mathcal{C}_{\out}$ used in Sections~\ref{sec:exp_conv_analysis} and \ref{sec:exp_conv_analysis_hold}.
	However, we can approximate
	these norms in a similar way  by
	replacing $X$ with $X \times \mathbb{C}^n$ for some suitable $n \in \mathbb{N}$
	when $X_{\rm c}$ is finite dimensional.
	\hspace*{\fill} $\triangle$ 
\end{remark}	

\begin{remark}
	If Assumption~\ref{assump:A_Pi}.c) does not hold, then
	\begin{equation}
		\label{eq:PiN_conv}
		\lim_{N \to \infty}\|\Pi_N A|_{X_N}\|_{\mathcal{L}(X_N)} = 
		\|A\|_{\mathcal{L}(X)}
	\end{equation} 
	is not true in general.
	To see this, let $X \coloneqq \ell^2(\mathbb{N})$ and let $e_n \in \ell^2(\mathbb{N})$ be 
	the $n$-th unit vector in $\ell^2(\mathbb{N})$.
	Let  $A$ be 
	the right shift operator  on $\ell^2(\mathbb{N})$, i.e.,
	\[
	A (x_1,x_2,x_3,\dots ) \coloneqq
	(
	0 , x_1 ,  x_2,  \dots
	).
	\]
	Consider the 
	projections $\Pi_N$, $N \in \mathbb{N}$, in the form \eqref{eq:Pi_ex}, where
	the Riesz basis $(\phi_n)_{n \in \mathbb{N}}$ is defined by
	$\phi_{2\ell-1} \coloneqq e_{2\ell-1} + e_{2\ell}$ and
	$\phi_{2\ell} \coloneqq e_{2\ell}$ for $\ell \in \mathbb{N}$.
	Since
	\begin{align*}
		(I- \Pi_{2\ell+1} )Ae_{2\ell} &= -e_{2\ell+2}, \\
		A(I- \Pi_{2\ell-1} )e_{2\ell-1} &= -e_{2\ell+1}
	\end{align*}
	for all $\ell \in \mathbb{N}$, Assumption~\ref{assump:A_Pi}.c) is not satisfied.
	For all $\ell\in \mathbb{N}$, we obtain
	$\Pi_{2\ell+1} A  \phi_{2\ell}  = \phi_{2\ell+1}$,
	and hence
	\[
	\|\Pi_{2\ell+1} A |_{X_{2\ell+1}}\|_{\mathcal{L}(X_{2\ell+1})} \geq 2 > 1 = \|A\|_{\mathcal{L}(X)}.
	\]
	This implies that \eqref{eq:PiN_conv} is not true either.
	\hspace*{\fill} $\triangle$ 
\end{remark}

The convergence property 
given in statement~a) of Theorem~\ref{thm:norm_conv} is not uniform with respect to
$k \in \mathbb{N}$. 
Therefore, it may not be efficient for computing
the constants $M$ and $M_{\rm h}$ used in Theorems~\ref{thm:exp_conv} and \ref{thm:exp_conv2}.
When an upper bound on 
$\limsup_{N \to \infty} \|\Pi_N\|_{\mathcal{L}(X)}$ is known, we obtain an estimate for $\sup_{k \in \mathbb{N}_0}\|(\mathcal{A}/\rho_0)^k\|_{\mathcal{L}(Z)}$, where
$r(\mathcal{A}) < \rho_0$.
The proof can be found in Section~\ref{sec:Proof_of_app2}.
\begin{theorem}
	\label{thm:Acl_powered_bound}
	Suppose that Assumptions~\ref{assump:A_Pi} and
	\ref{assump:finite_dimensional} hold. Define
	the operator $\mathcal{A}$ and its approximation
	$\mathcal{A}_N$ by \eqref{eq:Acl_def} and \eqref{eq:ANcl_def},
	respectively. Let $M \geq 1$ and $\rho>0$ 
	satisfy 
	\[
	\|\mathcal{A}^k\| \leq M \rho^k
	\]
	for all $k \in \mathbb{N}_0$, and set
	\begin{align*}
		\widehat M(\rho_0) &\coloneqq
		\limsup_{N \to \infty} \sup_{k \in \mathbb{N}_0} \left\| \left(
		\frac{\mathcal{A}_N}{\rho_0}\right)^k
		\right\|_{\mathcal{L}(Z_N)} 
	\end{align*}
	for $\rho_0 > \rho$. Then
	\begin{equation}
		\label{eq:widehat_M}
		\widehat M(\rho_0) \leq M
	\end{equation}
	and 
	\begin{equation}
		\label{eq:Acl_powered_bound}
		\sup_{k \in \mathbb{N}_0}\left\| \left( \frac{
			\mathcal{A}}{\rho_0}\right)^k
		\right\|_{\mathcal{L}(Z)}  \leq 
		\widehat M(\rho_0) \limsup_{N \to \infty} \|\Pi_N\|_{\mathcal{L}(X)}
	\end{equation}
	for all $\rho_0 > \rho$.
\end{theorem}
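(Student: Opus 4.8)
The plan is to transfer the given bound $\|\mathcal{A}^k\|\le M\rho^k$ between the spaces $Z$ and $Z_N$ by means of the projections $\widetilde\Pi_N$. The algebraic backbone is the identity $\mathcal{A}_N^k\widetilde\Pi_N=(\widetilde\Pi_N\mathcal{A}\widetilde\Pi_N)^k$ on $Z$, valid for every $k\in\mathbb{N}_0$; it follows by induction from $\widetilde\Pi_N^2=\widetilde\Pi_N$ together with $\mathcal{A}_N=\widetilde\Pi_N\mathcal{A}|_{Z_N}$, which is immediate from the definitions \eqref{eq:Acl_def}, \eqref{eq:ANcl_def}, \eqref{eq:ABCN_def}, and \eqref{eq:tilde_Pi_def}. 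By Assumption~\ref{assump:A_Pi}.a) the operators $\widetilde\Pi_N$ converge strongly to the identity on $Z$, and since $\sup_N\|\widetilde\Pi_N\|_{\mathcal{L}(Z)}\le M_{\Pi}<\infty$ by \eqref{eq:tilde_Pi_bound}, the operators $\widetilde\Pi_N\mathcal{A}\widetilde\Pi_N$ converge strongly to $\mathcal{A}$; iterating the standard fact that products of uniformly bounded, strongly convergent operator sequences converge strongly, I obtain $(\widetilde\Pi_N\mathcal{A}\widetilde\Pi_N)^k\to\mathcal{A}^k$ strongly for each fixed $k$. I would also record that $\|\widetilde\Pi_N\|_{\mathcal{L}(Z)}=\|\Pi_N\|_{\mathcal{L}(X)}$ whenever $\Pi_N\neq0$, since $\widetilde\Pi_N=\diag(\Pi_N,I)$ on a product space and $\|\Pi_N\|_{\mathcal{L}(X)}\ge1$ for a nonzero idempotent.

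For the bound \eqref{eq:Acl_powered_bound} I would fix $z\in Z$ and $k\in\mathbb{N}_0$ and use the above identity together with the strong convergence to write $\|\mathcal{A}^kz\|_Z=\lim_{N\to\infty}\|\mathcal{A}_N^k\widetilde\Pi_Nz\|_Z$. Estimating $\|\mathcal{A}_N^k\widetilde\Pi_Nz\|_Z\le\|\mathcal{A}_N^k\|_{\mathcal{L}(Z_N)}\,\|\widetilde\Pi_N\|_{\mathcal{L}(Z)}\,\|z\|_Z$, dividing by $\rho_0^k$, and bounding $\|\mathcal{A}_N^k\|_{\mathcal{L}(Z_N)}/\rho_0^k$ by $\sup_{j\in\mathbb{N}_0}\|(\mathcal{A}_N/\rho_0)^j\|_{\mathcal{L}(Z_N)}$ give, after taking $\limsup_{N\to\infty}$ and invoking the elementary inequality $\limsup_N(a_Nb_N)\le(\limsup_Na_N)(\limsup_Nb_N)$ for nonnegative sequences, the estimate $\|(\mathcal{A}/\rho_0)^kz\|_Z\le\widehat M(\rho_0)\,\big(\limsup_N\|\Pi_N\|_{\mathcal{L}(X)}\big)\,\|z\|_Z$. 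Taking the supremum over $k$ and over $z$ in the unit ball of $Z$ then yields \eqref{eq:Acl_powered_bound}.

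The inequality \eqref{eq:widehat_M} is the delicate one, because the supremum over $k$ sits inside the $\limsup_N$, so I cannot pass to the limit term by term; I expect this interchange of $\sup_k$ and $\limsup_N$ to be the main obstacle, and the plan is to split the supremum into a finite head and a geometric tail. For the head I use statement~a) of Theorem~\ref{thm:norm_conv}: for each fixed $k$ one has $\|\mathcal{A}_N^k\|_{\mathcal{L}(Z_N)}\to\|\mathcal{A}^k\|_{\mathcal{L}(Z)}\le M\rho^k$, so $\limsup_N\max_{0\le k<K}\|(\mathcal{A}_N/\rho_0)^k\|_{\mathcal{L}(Z_N)}\le\max_{0\le k<K}\|(\mathcal{A}/\rho_0)^k\|_{\mathcal{L}(Z)}\le M$ for every $K$. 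For the tail I exploit $r(\mathcal{A})\le\rho<\rho_0$: choosing $m$ with $\|\mathcal{A}^m\|_{\mathcal{L}(Z)}/\rho_0^m<1$ and again invoking Theorem~\ref{thm:norm_conv}.a), I obtain, for all sufficiently large $N$, both a contraction factor $\|(\mathcal{A}_N/\rho_0)^m\|_{\mathcal{L}(Z_N)}\le\beta<1$ and a uniform bound on the finitely many remainder norms $\|(\mathcal{A}_N/\rho_0)^r\|_{\mathcal{L}(Z_N)}$, $0\le r<m$. Writing $k=qm+r$ and applying submultiplicativity then forces $\sup_{k\ge K}\|(\mathcal{A}_N/\rho_0)^k\|_{\mathcal{L}(Z_N)}$ below any prescribed $\varepsilon$ once $K$ is large, uniformly over large $N$. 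Combining the head and tail estimates shows $\sup_k\|(\mathcal{A}_N/\rho_0)^k\|_{\mathcal{L}(Z_N)}\le M+\varepsilon$ for all large $N$, and letting $\varepsilon\to0$ gives $\widehat M(\rho_0)\le M$, which in particular guarantees that the constant appearing in \eqref{eq:Acl_powered_bound} is finite.
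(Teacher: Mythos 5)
Your proposal is correct, but it takes a genuinely different route from the paper's. The paper factors everything through an auxiliary renorming lemma (Lemma~\ref{lem:Phi_k_norm_bound}): it equips $Z$ with the adapted norm $|w| \coloneqq \sup_{k}\|\rho^{-k}\mathcal{A}^k w\|_Z$, under which $\mathcal{A}$ becomes a $\rho$-contraction, and applies the lemma to $\Phi_N \coloneqq \mathcal{A}_N\widetilde\Pi_N$ viewed as operators on the \emph{fixed} space $Z$, using the norm convergence $\|\mathcal{A}-\mathcal{A}_N\widetilde\Pi_N\|_{\mathcal{L}(Z)}\to 0$ from Lemma~\ref{lem:A_ANtilPi_power} with $k=1$; a telescoping estimate inside that lemma gives the \emph{uniform-in-$k$} bound $\sup_k \rho_0^{-k}\|\mathcal{A}^k-(\mathcal{A}_N\widetilde\Pi_N)^k\|_{\mathcal{L}(Z)} \le M\delta_N/(\rho_0-\rho)\to 0$, and the sandwich $\|\mathcal{A}_N^k\|_{\mathcal{L}(Z_N)} \le \|(\mathcal{A}_N\widetilde\Pi_N)^k\|_{\mathcal{L}(Z)} \le \|\mathcal{A}_N^k\|_{\mathcal{L}(Z_N)}\|\widetilde\Pi_N\|_{\mathcal{L}(Z)}$ transfers the result to $Z_N$, yielding \eqref{eq:widehat_M} and \eqref{eq:Acl_powered_bound} simultaneously. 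You instead prove \eqref{eq:Acl_powered_bound} using only \emph{strong} convergence of $(\widetilde\Pi_N\mathcal{A}\widetilde\Pi_N)^k$ to $\mathcal{A}^k$ plus uniform boundedness of the projections (so this half needs neither Assumption~\ref{assump:A_Pi}.c) nor the finite-rank structure of $B$ and $C$), and you obtain \eqref{eq:widehat_M} by combining the fixed-$k$ norm convergence of Theorem~\ref{thm:norm_conv}.a) with a head/tail split: submultiplicativity on $Z_N$ together with a contraction power $m$ (available since $\|(\mathcal{A}/\rho_0)^m\|_{\mathcal{L}(Z)} \le M(\rho/\rho_0)^m < 1$ for large $m$) makes the tail $\sup_{k\ge K}$ small uniformly over large $N$, which is exactly the interchange of $\sup_k$ and $\limsup_N$ that you correctly identify as the crux; your remark that $\|\widetilde\Pi_N\|_{\mathcal{L}(Z)}=\|\Pi_N\|_{\mathcal{L}(X)}$ for nonzero idempotents is what legitimately produces the factor $\limsup_N\|\Pi_N\|_{\mathcal{L}(X)}$ in \eqref{eq:Acl_powered_bound}. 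Comparing the two: the paper's renorming argument is quantitative (explicit rate $M\delta_N/(\rho_0-\rho)$), reusable as a standalone lemma, and independent of Theorem~\ref{thm:norm_conv}; your route avoids introducing the adapted norm, makes the first inequality hold under weaker hypotheses, but leans on Theorem~\ref{thm:norm_conv}.a) for the second. One cosmetic slip: the identity $\mathcal{A}_N^k\widetilde\Pi_N=(\widetilde\Pi_N\mathcal{A}\widetilde\Pi_N)^k$ holds for $k\in\mathbb{N}$ but fails at $k=0$ (the left side is $\widetilde\Pi_N$, the right side $I$); since the $k=0$ cases of both conclusions are trivial, nothing breaks, but the claim should be stated for $k\ge 1$.
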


\subsection{Proof of Theorem~\ref{thm:norm_conv}}
\label{sec:Proof_of_app1}
We start with an auxiliary
approximation result on
the powers of $\mathcal{A}$.
\begin{lemma}
	\label{lem:A_ANtilPi_power}
	Suppose that Assumptions~\ref{assump:A_Pi} and
	\ref{assump:finite_dimensional} hold. Then,
	the operator $\mathcal{A}$ and its approximation
	$\mathcal{A}_N$ defined by \eqref{eq:Acl_def} and \eqref{eq:ANcl_def},
	respectively, satisfy
	\begin{align}
		\lim_{N \to \infty}
		\|
		\mathcal{A}^k  -  \mathcal{A}_N^k\widetilde \Pi_N
		\|_{\mathcal{L}(Z)}&=
		0 \label{eq:APi_conv}
	\end{align}
	for all $k \in \mathbb{N}$, where the operator $\widetilde \Pi_N$ is as in \eqref{eq:tilde_Pi_def}.
\end{lemma}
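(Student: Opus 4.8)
The plan is to reduce the claim for general $k$ to the case $k=1$ by means of an exact algebraic identity, and then to establish the case $k=1$ through block-wise operator-norm estimates.

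First I would record the elementary observation that $\widetilde\Pi_N$ acts as the identity on $Z_N=\widetilde\Pi_N Z$, so that $\widetilde\Pi_N\mathcal{A}_N=\mathcal{A}_N$ when $\mathcal{A}_N$ is regarded as mapping into $Z$. A direct block computation using $A_N\Pi_N=\Pi_N A\Pi_N$, $B_N=\Pi_N B$, and $C_N\Pi_N=C\Pi_N$ from \eqref{eq:ABCN_def} then gives the identity $\mathcal{A}_N\widetilde\Pi_N=\widetilde\Pi_N\mathcal{A}\widetilde\Pi_N$ as operators on $Z$. Writing $\mathcal{B}_N\coloneqq\widetilde\Pi_N\mathcal{A}\widetilde\Pi_N$ and combining these two facts with a short induction yields $\mathcal{A}_N^k\widetilde\Pi_N=\mathcal{B}_N^k$ for every $k\in\mathbb{N}$. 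Hence \eqref{eq:APi_conv} is equivalent to $\lim_{N\to\infty}\|\mathcal{A}^k-\mathcal{B}_N^k\|_{\mathcal{L}(Z)}=0$.

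For the case $k=1$, I would use the splitting $\mathcal{A}-\mathcal{B}_N=(I-\widetilde\Pi_N)\mathcal{A}+\widetilde\Pi_N\mathcal{A}(I-\widetilde\Pi_N)$ and bound the operator blocks appearing in $(I-\widetilde\Pi_N)\mathcal{A}$ and $\mathcal{A}(I-\widetilde\Pi_N)$ separately. The two blocks involving $A$, namely $(I-\Pi_N)A$ and $A(I-\Pi_N)$, tend to zero in $\mathcal{L}(X)$ directly by Assumption~\ref{assump:A_Pi}.c). The block $(I-\Pi_N)B$ tends to zero by \eqref{eq:B_PiN_conv}. For the remaining block $C(I-\Pi_N)$ I would invoke Assumption~\ref{assump:finite_dimensional}: writing $Cx=\sum_{i}f_i(x)y_i$ with $f_i\in X'$ and $y_i\in Y$, one has $\|C(I-\Pi_N)\|_{\mathcal{L}(X,Y)}\le\sum_i\|(I-\Pi_N')f_i\|_{X'}\|y_i\|_Y$, which tends to zero by Assumption~\ref{assump:A_Pi}.b). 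Together with the uniform bound \eqref{eq:tilde_Pi_bound} this gives $\|\mathcal{A}-\mathcal{B}_N\|_{\mathcal{L}(Z)}\to0$.

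Finally, I would pass from $k=1$ to general $k$ via the telescoping identity $\mathcal{A}^k-\mathcal{B}_N^k=\sum_{j=0}^{k-1}\mathcal{A}^{k-1-j}(\mathcal{A}-\mathcal{B}_N)\mathcal{B}_N^{j}$, together with the uniform bound $\sup_N\|\mathcal{B}_N\|_{\mathcal{L}(Z)}\le M_{\Pi}^2\|\mathcal{A}\|_{\mathcal{L}(Z)}$ obtained from \eqref{eq:tilde_Pi_bound}. Since $k$ is fixed and $\|\mathcal{A}-\mathcal{B}_N\|_{\mathcal{L}(Z)}\to0$, the right-hand side tends to zero. The point that must be handled with care — and the only place where mere strong convergence of $\Pi_N$ would be insufficient — is the diagonal block $A$: because $A$ is not assumed to be of finite rank, its norm approximation genuinely requires the uniform convergence in Assumption~\ref{assump:A_Pi}.c), whereas the off-diagonal blocks $B$ and $C$ are controlled through the finite-dimensionality of $U$ and $Y$ (Assumption~\ref{assump:finite_dimensional}) and duality.
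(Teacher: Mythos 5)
Your proposal is correct and takes essentially the same route as the paper: the case $k=1$ rests on exactly the same block-wise estimates (Assumption~\ref{assump:A_Pi}.c) for the blocks $(I-\Pi_N)A$ and $A(I-\Pi_N)$, finite rank of $B$ together with Assumption~\ref{assump:A_Pi}.a) for $(I-\Pi_N)B$, and finite rank of $C'$ together with Assumption~\ref{assump:A_Pi}.b) for $C(I-\Pi_N)$), and your telescoping identity for $\mathcal{A}^k-(\widetilde\Pi_N\mathcal{A}\widetilde\Pi_N)^k$, using $\mathcal{A}_N^k\widetilde\Pi_N=(\widetilde\Pi_N\mathcal{A}\widetilde\Pi_N)^k$ and the uniform bound \eqref{eq:tilde_Pi_bound}, is just an unrolled form of the paper's induction on $k$, which relies on $\mathcal{A}_N=\widetilde\Pi_N\mathcal{A}|_{Z_N}$ and the same two-term splitting. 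There are no gaps.
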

\begin{proof}
	In the case $k = 1$, we have 
	\[
	\mathcal{A}  - \mathcal{A}_N \widetilde \Pi_N =
	\begin{bmatrix}
		A - \Pi_NA \Pi_N  & (I - \Pi_N) BR\\
		QC(I - \Pi_N) & 0
	\end{bmatrix}.
	\]
	By  the triangle inequality,
	\begin{align*}
		&\|A - \Pi_NA \Pi_N \|_{\mathcal{L}(X)}  \leq 
		\|(I - \Pi_N)A  \|_{\mathcal{L}(X)} + M_{\Pi} \|A(I -   \Pi_N) \|_{\mathcal{L}(X)},
	\end{align*}
	where $M_{\Pi} \coloneqq \sup_{N\in \mathbb{N}} \|\Pi_N \|_{\mathcal{L}(X)}$. 
	Recall that $M_{\Pi} < \infty $ by 
	Assumption~\ref{assump:A_Pi}.a) and the uniform boundedness
	principle.
	Using Assumption~\ref{assump:A_Pi}.c), we obtain
	\begin{equation}
		\label{eq:A_PiN_conv}
		\lim_{N \to \infty}\|A - \Pi_NA \Pi_N \|_{\mathcal{L}(X)}  =0.
	\end{equation}
	Since the dual operator $C' \in \mathcal{L}(Y',X')$ has finite rank, the following property similar to that of $B$ given in
	\eqref{eq:B_PiN_conv} is satisfied under Assumption~\ref{assump:A_Pi}.b):
	\begin{equation}
		\label{eq:C_PiN_conv}
		\lim_{N \to \infty} \hspace{-0.1pt}\|C(I-\Pi_N) \|_{\mathcal{L}(X, Y)}
		\hspace{-1pt}=\hspace{-1pt} \lim_{N \to \infty}\hspace{-0.1pt}
		\|(I-\Pi_N')C' \|_{\mathcal{L}(Y', X')}\hspace{-0.6pt}=\hspace{-0.6pt} 0.
	\end{equation}
	Hence, \eqref{eq:APi_conv} holds for $k = 1$.

	Assume that \eqref{eq:APi_conv} is true
	for some $k \in \mathbb{N}$. Since  $\mathcal{A}_N = \widetilde \Pi_N \mathcal{A}|_{Z_N}$,
	the triangle inequality together with \eqref{eq:tilde_Pi_bound}
	yields
	\begin{align*}
		\| \mathcal{A}^{k+1}  - 
		\mathcal{A}_N^{k+1} \widetilde \Pi_N \|_{\mathcal{L}(Z)} 
		&\leq
		\|(I - \widetilde \Pi_N) \mathcal{A} \|_{\mathcal{L}(Z)}
		\hspace{1pt} \|\mathcal{A}^k\|_{\mathcal{L}(Z)} + 
		M_{\Pi} \|\mathcal{A}\|_{\mathcal{L}(Z)} \hspace{1pt} 
		\| 
		\mathcal{A}^k - \mathcal{A}_N^k \widetilde \Pi_N \|_{\mathcal{L}(Z)}.
	\end{align*}
	By Assumption~\ref{assump:A_Pi}.c) and \eqref{eq:B_PiN_conv},
	$\|(I - \widetilde \Pi_N) \mathcal{A} \|_{\mathcal{L}(Z)} \to 0$ as 
	$N \to \infty$.
	This and the induction hypothesis show that 
	$\| \mathcal{A}^{k+1}  - 
	\mathcal{A}_N^{k+1} \widetilde \Pi_N \|_{\mathcal{L}(Z)} \to 0$
	as $N \to \infty$. Thus,
	\eqref{eq:APi_conv} holds 
	for all $k \in \mathbb{N}$ by induction.
\end{proof}

Now we are in a position to prove Theorem~\ref{thm:norm_conv}.
Although statement~c) on $C_N$ can be proved 
by slightly modifying the proof of \cite[Theorem~2.9]{Kinoshita2020},
we prove it here as a special case of statement~a) on $\mathcal{A}_N$.

\noindent\hspace{1em}{\itshape {\bf Proof of Theorem~\ref{thm:norm_conv}:} }
The proof is divided into three steps.

{\em Step 1: }
Statement~b) on $B_N$
immediately follows from \eqref{eq:B_PiN_conv}.
Moreover, we can prove statement~c) on $C_N$ from statement~a) on $\mathcal{A}_N$, considering 
the special case when $X_{\rm c} = Y$, $A=0$, $P=0$, $Q=I$, $R = 0$, and $k=1$.
It remains to prove statement~a)  on $\mathcal{A}_N$.

{\em Step 2: }	 
In this step,
we show that 
\begin{equation}
	\label{eq:Ak_upperbound_liminf}
	\liminf_{N \to \infty}\|\mathcal{A}_N^k\|_{\mathcal{L}(Z_N)}
	\geq 
	\|\mathcal{A}^k\|_{\mathcal{L}(Z)}
\end{equation}
for a fixed $k \in \mathbb{N}$.
To this end, it suffices to prove that 
the following inequality holds for all $z_0 \in Z$ with $\|z_0\|_Z = 1$:
\begin{equation}
	\label{eq:Ak_lowerbound}
	\liminf_{N \to \infty}\|\mathcal{A}_N^k\|_{\mathcal{L}(Z_N)}
	\geq \|\mathcal{A}^kz_0\|_{Z}.
\end{equation}

Let $z_0 \in Z$ satisfy $\|z_0\|_Z = 1$.
Since we have from Assumption~\ref{assump:A_Pi}.a) that 
\begin{equation}
	\label{eq:z_star_conv}
	\lim_{N \to \infty} \|\widetilde \Pi_Nz_0\|_Z = \|z_0 \|_Z = 1,
\end{equation}
there exists $N_1 \in \mathbb{N}$ such that $\|\widetilde \Pi_{N} z_0\|_Z >0$
for all $N \geq N_1$. 
Moreover,
\begin{equation}
	\label{eq:ANk_lowerbound}
	\|\mathcal{A}_N^k\|_{\mathcal{L}(Z_N)} = 
	\sup_{\substack{z \in Z_N \\ z \not=0}} \frac{ \|\mathcal{A}_N^k z\|_Z }{\|z\|_{Z}} \geq 
	\frac{ \| \mathcal{A}_N^k \widetilde \Pi_Nz_0\|_Z }{\| \widetilde \Pi_N z_0\|_{Z}} 
\end{equation}
for all $N \geq N_1$.
By Lemma~\ref{lem:A_ANtilPi_power}, we obtain
\begin{equation}
	\label{eq:AN_z_star_conv}
	\lim_{N \to \infty}
	\|\mathcal{A}_N^k \widetilde \Pi_Nz_0\|_Z = \| \mathcal{A}^k z_0 \|_Z.
\end{equation}
Combining \eqref{eq:z_star_conv}--\eqref{eq:AN_z_star_conv}, we obtain
the inequality
\eqref{eq:Ak_lowerbound}.

{\em Step 3: }
In the last step, we prove that
\begin{equation}
	\label{eq:limsupA_N}
	\|\mathcal{A}^k\|_{\mathcal{L}(Z)} \geq 
	\limsup_{N \to \infty}\|\mathcal{A}_N^k\|_{\mathcal{L}(Z_N)}
\end{equation}
for a fixed $k \in \mathbb{N}$. 
Together with \eqref{eq:Ak_upperbound_liminf}, this implies that statement~a) holds.

To get a contradiction, 
assume that the inequality \eqref{eq:limsupA_N} is not true.
Then there exist a constant $\varepsilon >0$ and 
a subsequence $(\mathcal{A}^k_{N_j})_{j \in \mathbb{N}}$ such that 
\begin{equation}
	\label{eq:A_lowerbound}
	\|\mathcal{A}^k\|_{\mathcal{L}(Z)} + 2\varepsilon \leq \|\mathcal{A}_{N_j}^k\|_{\mathcal{L}(Z_{N_j})} 
\end{equation}
for all $j \in \mathbb{N}$.

Take $\delta_1>0$ arbitrarily. By Assumption~\ref{assump:A_Pi}.c)  and \eqref{eq:B_PiN_conv},
there exists $j \in \mathbb{N}$ such that 
\[
\| (I - \Pi_{N_j})  A\|_{\mathcal{L}(X)} +
\| (I - \Pi_{N_j})  BR\|_{\mathcal{L}(X_{\rm c},X)} \leq \delta_1.
\]
This yields $\|
\mathcal{A}z - \mathcal{A}_{N_j} z
\|_Z \leq  \delta_1 \|z\|_Z$
for all $z \in Z_{N_j}$.
If there exist $\ell  \in \mathbb{N}$ and $\delta_\ell >0$ such that
\begin{equation}
	\label{eq:A_power_diff}
	\|
	\mathcal{A}^\ell z - \mathcal{A}_{N_j}^\ell  z
	\|_Z \leq \delta_\ell  \|z\|_Z
\end{equation}
for all $z \in Z_{N_j}$,
then
\begin{align*}
	\|
	\mathcal{A}^{\ell +1}z - \mathcal{A}_{N_j}^{\ell +1} z
	\|_Z & \leq 
	\| \mathcal{A}^\ell 
	(\mathcal{A} z - \mathcal{A}_{N_j} z) \|_Z
	+\|
	(\mathcal{A}^\ell - \mathcal{A}_{N_j}^\ell)  (\mathcal{A}_{N_j} z) 
	\|_Z  \\
	& \leq 
	( 
	\| \mathcal{A}^\ell \|_{\mathcal{L}(Z)} \delta_1 + 
	\|\mathcal{A}_{N_j}\|_{\mathcal{L}(Z_{N_j})} \delta_\ell 
	) \|z\|_Z
\end{align*}
for all $z \in Z_{N_j}$.
By \eqref{eq:tilde_Pi_bound}, we have
\begin{align*}
	\|\mathcal{A}_N \|_{\mathcal{L}(Z_N)} 
	\leq
	\| \widetilde \Pi_N \mathcal{A} \|_{\mathcal{L}(Z)}  
	\leq M_{\Pi} \|\mathcal{A}\|_{\mathcal{L}(Z)}
\end{align*}
for all $N \in \mathbb{N}$. From these observations, we define
\[
\delta_{\ell +1} \coloneqq 	\| \mathcal{A}^\ell \|_{\mathcal{L}(Z)} \delta_1 + 
M_{\Pi}\|\mathcal{A}\|_{\mathcal{L}(Z)}  \delta_\ell
\]
for $\ell  \in \mathbb{N}$.
Then the inequality \eqref{eq:A_power_diff} holds
for all $z \in Z_{N_j}$ and $\ell  \in \mathbb{N}$.
Moreover, 
for all $\ell  \in \mathbb{N}$,
there exists a constant $M_\ell  >0$, independent on $j$, such that 
$\delta_\ell  \leq M_\ell  \delta_1$.

Taking $\delta_1 = \varepsilon/M_k$ in the above argument, we see that
there exists $j \in \mathbb{N}$ such that 
\[
\|
\mathcal{A}^kz - \mathcal{A}_{N_j}^k z
\|_Z \leq \varepsilon \|z\|_Z
\]
for all $z \in Z_{N_j}$.
Therefore,
\begin{align*}
	\|\mathcal{A}_{N_j}^k\|_{\mathcal{L}(Z_{N_j})}
	\leq 
	\sup_{\substack{z \in Z_{N_j} \\ \|z\|_Z = 1}}
	\|\mathcal{A}^k z \|_Z + \varepsilon
	\leq \| \mathcal{A}^k\|_{\mathcal{L}(Z)} + \varepsilon.
\end{align*}
This contradicts \eqref{eq:A_lowerbound}.
Thus, the inequality \eqref{eq:limsupA_N} holds.
\hspace*{\fill} $\blacksquare$

\subsection{Proof of
	Theorem~\ref{thm:Acl_powered_bound}}
\label{sec:Proof_of_app2}
The following lemma is useful to estimate the norms of the powers of a bounded linear operator.
\begin{lemma}
	\label{lem:Phi_k_norm_bound}
	Let $W$ be a nontrivial real or complex Banach space, and
	let $\Phi \in \mathcal{L}(W)$.
	Let $M \geq 1$ and $\rho > 0$ satisfy
	\[
	\|\Phi^k\|_{\mathcal{L}(W)} \leq M \rho^k
	\]
	for all 
	$k \in \mathbb{N}_0$.
	Assume that
	a sequence $(\Phi_N)_{N \in \mathbb{N}}$ in $
	\mathcal{L}(W)$ satisfies
	\[
	\lim_{N \to \infty} \|\Phi - \Phi_N \|_{\mathcal{L}(W)} = 0.
	\]
	Then
	for all $\rho_0 > \rho$, the constant 
	$\widehat M_1(\rho_0)$ defined by
	\[
	\widehat M_1(\rho_0) \coloneqq
	\limsup_{N \to \infty} \sup_{k \in \mathbb{N}_0}\left\|
	\left( \frac{\Phi_N}{\rho_0} \right)^k
	\right\|_{\mathcal{L}(W)} 
	\]
	satisfies 
	\begin{equation}
		\label{eq:Phi_k_bound}
		\sup_{k \in \mathbb{N}_0} \left\| \left( \frac{\Phi}{\rho_0}\right)^k
		\right\|_{\mathcal{L}(W)}  \leq \widehat M_1(\rho_0) \leq M.
	\end{equation}
	\vspace{-5pt}
\end{lemma}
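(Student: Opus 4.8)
The plan is to prove the two inequalities in \eqref{eq:Phi_k_bound} separately, both built on an equivalent renorming of $W$ adapted to $\Phi$, in the spirit of the norm $|\cdot|_Z$ used in the proof of Theorem~\ref{thm:exp_conv}. Fix $\rho_0 > \rho$ and choose an intermediate value $\rho_1 \in (\rho,\rho_0)$. Since $\|\Phi^k\|_{\mathcal{L}(W)} \leq M\rho^k$ implies $\|(\Phi/\rho_1)^k\|_{\mathcal{L}(W)} \leq M(\rho/\rho_1)^k$ with $\rho/\rho_1 < 1$, the quantity
\[
|w|_1 \coloneqq \sup_{k \in \mathbb{N}_0} \left\| \left( \frac{\Phi}{\rho_1} \right)^k w \right\|_W
\]
defines a norm on $W$ satisfying $\|w\|_W \leq |w|_1 \leq M\|w\|_W$ together with the \emph{strict} contraction estimate $|\Phi w|_1 \leq \rho_1 |w|_1$ (which holds since $\Phi$ commutes with its own powers). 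Making this contraction strict is precisely why the intermediate $\rho_1$ is introduced.

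For the right-hand inequality $\widehat M_1(\rho_0) \leq M$, I would transfer the perturbation $\Phi - \Phi_N$ to the new norm. Writing $\delta_N \coloneqq M\|\Phi - \Phi_N\|_{\mathcal{L}(W)}$, which tends to $0$, and using the equivalence $\|\cdot\|_W \leq |\cdot|_1 \leq M\|\cdot\|_W$, one obtains $|\Phi_N w|_1 \leq (\rho_1 + \delta_N)|w|_1$, hence by iteration $\|\Phi_N^k w\|_W \leq |\Phi_N^k w|_1 \leq (\rho_1 + \delta_N)^k M\|w\|_W$. This gives $\|(\Phi_N/\rho_0)^k\|_{\mathcal{L}(W)} \leq M((\rho_1 + \delta_N)/\rho_0)^k$ for all $k \in \mathbb{N}_0$. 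For $N$ large enough that $\rho_1 + \delta_N < \rho_0$, the supremum of the right-hand side over $k$ is attained at $k=0$ and equals $M$, so $\sup_{k}\|(\Phi_N/\rho_0)^k\|_{\mathcal{L}(W)} \leq M$; taking $\limsup_{N \to \infty}$ then yields $\widehat M_1(\rho_0) \leq M$.

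For the left-hand inequality, I would exploit that norm convergence $\Phi_N \to \Phi$ passes to each fixed power. For fixed $k$, the telescoping identity $\Phi_N^k - \Phi^k = \sum_{i=0}^{k-1} \Phi_N^i (\Phi_N - \Phi)\Phi^{k-1-i}$, together with the uniform bound $\sup_N \|\Phi_N\|_{\mathcal{L}(W)} < \infty$, gives $\|\Phi_N^k - \Phi^k\|_{\mathcal{L}(W)} \to 0$ and therefore $\|(\Phi_N/\rho_0)^k\|_{\mathcal{L}(W)} \to \|(\Phi/\rho_0)^k\|_{\mathcal{L}(W)}$. Since $\|(\Phi_N/\rho_0)^k\|_{\mathcal{L}(W)} \leq \sup_{j}\|(\Phi_N/\rho_0)^j\|_{\mathcal{L}(W)}$ for every $N$, letting $N \to \infty$ yields $\|(\Phi/\rho_0)^k\|_{\mathcal{L}(W)} \leq \widehat M_1(\rho_0)$ for each $k$; taking the supremum over $k$ gives the left inequality.

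The main obstacle is the renorming step. The naive choice $|w| = \sup_k \|(\Phi/\rho_0)^k w\|$ produces only the non-strict contraction $|(\Phi/\rho_0)w| \leq |w|$, under which the perturbation generates a contraction factor exceeding $1$, whose powers diverge as $k \to \infty$, so $\sup_k$ cannot be controlled. Interposing $\rho_1 \in (\rho,\rho_0)$ to obtain a strict contraction factor $\rho_1/\rho_0 < 1$ that still absorbs the vanishing perturbation $\delta_N$ is the decisive device; the remaining estimates are routine.
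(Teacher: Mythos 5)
Your proof is correct, and it splits into one half that matches the paper and one half that is genuinely simpler. For the inequality $\widehat M_1(\rho_0)\leq M$ you follow essentially the paper's route: the paper also introduces the adapted norm $|w|\coloneqq\sup_{k}\|\rho^{-k}\Phi^k w\|_W$ and the induced operator norm, obtains $|\Phi|\leq\rho$ and the equivalence $\|S\|/M\leq|S|\leq M\|S\|$, and absorbs the perturbation via $|\Phi_N|\leq\rho+\delta_N\leq\rho_0$ for large $N$. Note that renorming at rate $\rho$ itself already gives a contraction factor strictly below $\rho_0$, so your intermediate $\rho_1\in(\rho,\rho_0)$ is unnecessary (the slack $\rho_0-\rho$ is what absorbs $\delta_N$), though entirely harmless. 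Where you genuinely diverge is the left-hand inequality: the paper establishes the \emph{uniform-in-$k$} convergence $\sup_{k\in\mathbb{N}_0}\|(\Phi^k-\Phi_N^k)/\rho_0^k\|_{\mathcal{L}(W)}\leq M\delta_N/(\rho_0-\rho)\to 0$ by an inductive geometric-series estimate ($\eta_N(k+1)=\varsigma^k\eta_N(1)+\eta_N(k)$ with $\varsigma=\rho/\rho_0$) carried out in the adapted norm, and then concludes by the triangle inequality; you instead prove only fixed-$k$ convergence $\|\Phi_N^k-\Phi^k\|_{\mathcal{L}(W)}\to 0$ by telescoping, which is legitimate and sufficient here because the bound $\widehat M_1(\rho_0)$ is independent of $k$: you may pass $N\to\infty$ at each fixed $k$ to get $\|(\Phi/\rho_0)^k\|_{\mathcal{L}(W)}\leq\widehat M_1(\rho_0)$ and only then take the supremum over $k$. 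Your version is more elementary and needs no renorming for this half; the paper's version buys a quantitative, $k$-uniform error bound with explicit rate $M\delta_N/(\rho_0-\rho)$, which is stronger than the lemma requires but could be useful if one wanted convergence rates for the approximation of $\sup_k\|(\mathcal{A}/\rho_0)^k\|$ in Theorem~\ref{thm:Acl_powered_bound}.
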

\begin{proof}
	We define  a new norm on the Banach space $W$
	as in the proof of Theorem~\ref{thm:exp_conv}:
	\begin{align*}
		|w|_W &\coloneqq \sup_{k \in \mathbb{N}_0} \|\rho^{-k}\Phi^k w\|_W,\quad 
		w \in W.
	\end{align*}
	Define 
	the  operator norm $|\cdot|_{\mathcal{L}(W)}$ by
	\[
	|S|_{\mathcal{L}(W)} \coloneqq \sup_{\substack{w \in W \\ w \not=0}}
	\frac{|Sw|_W}{|w|_W},\quad 
	S \in \mathcal{L}(W).
	\]
	Since the norm $|\cdot|_W$ has the properties given in \eqref{eq:new_norm_prop},
	we obtain
	$|\Phi|_{\mathcal{L}(W)} \leq \rho$ and
	\begin{equation}
		\label{eq:new_operator_norm_prop}
		\frac{\|S\|_{\mathcal{L}(W)}}{M} 
		\leq |S|_{\mathcal{L}(W)} \leq  M\|S\|_{\mathcal{L}(W)}
	\end{equation}
	for all $S \in \mathcal{L}(W)$.
	
	Let $\rho_0 > \rho$ be given.
	First, we show that $\widehat M_1(\rho_0) \leq M$.
	Define
	$\delta_N\coloneqq |\Phi - \Phi_N|_{\mathcal{L}(W)}$ for $N \in \mathbb{N}$.
	By assumption, 
	\begin{equation}
		\label{eq:alpha_0_conv}
		\lim_{N \to \infty }\delta_N = 0,
	\end{equation}
	and hence
	there exists $N_1 \in \mathbb{N}$ such that 
	$\rho +\delta_N \leq \rho_0$ for all $N\geq N_1$.
	This gives
	\begin{equation}
		\label{eq:Phi_N_bound}
		|\Phi_N|_{\mathcal{L}(W)} \leq 
		|\Phi|_{\mathcal{L}(W)} + |\Phi- \Phi_N|_{\mathcal{L}(W)} 
		\leq  \rho_0
	\end{equation}	
	for all  $N \geq N_1$.
	By \eqref{eq:Phi_N_bound},
	$|\Phi_N^k|_{\mathcal{L}(W)} \leq \rho_0^k$ for all $k \in \mathbb{N}_0$ and $N \geq N_1$. This and 
	\eqref{eq:new_operator_norm_prop} yield
	$
	\|\Phi_N^k\|_{\mathcal{L}(W)}  \leq M \rho_0^k
	$
	for all $k \in \mathbb{N}_0$ and $N \geq N_1$.
	Hence, the desired inequality $\widehat M_1(\rho_0) \leq M$ is obtained.
	
	Next, we prove that 
	\begin{equation}
		\label{eq:Phi_rho_bound}
		\sup_{k \in \mathbb{N}_0} \left\| \left( \frac{\Phi}{\rho_0}\right)^k
		\right\|_{\mathcal{L}(W)} \leq \widehat M_1(\rho_0).
	\end{equation}
	Let $N \geq N_1$, and define $\eta_N(1) \coloneqq \delta_N/\rho_0$.
	Then the inequality 
	\begin{equation}
		\label{eq:Phi_rho_power_bound}
		\left| 
		\frac{\Phi^k - \Phi^k_N}{\rho_0^k} \right|_{\mathcal{L}(W)} \leq \eta_N(k)
	\end{equation}
	is satisfied for $k = 1$.
	Define $\varsigma \coloneqq \rho/\rho_0 \in (0,1)$.
	We have
	$|(\Phi/\rho_0)^k|_{\mathcal{L}(W)} \leq \varsigma^k$.
	If
	$\eta_N(k) > 0$ satisfies \eqref{eq:Phi_rho_power_bound}
	for some $k \in \mathbb{N}$, then 
	\begin{align*}
		\left|\frac{\Phi^{k+1} - \Phi^{k+1} _N}{\rho_0^{k+1} } \right|_{\mathcal{L}(W)} &\leq 
		\left| \left( \frac{\Phi}{\rho_0} \right)^k 
		\frac{\Phi - \Phi_N}{\rho_0}\right|_{\mathcal{L}(W)}  +
		\left| \frac{\Phi^k - \Phi^k_N}{\rho_0^k}  \cdot
		\frac{\Phi_N}{\rho_0}  \right|_{\mathcal{L}(W)} \\
		&\leq \varsigma^k \eta_N(1)  + \eta_N(k).
	\end{align*}
	From this observation, we define
	\[
	\eta_N(k+1) \coloneqq \varsigma^k \eta_N(1)  + \eta_N(k) =
	\left(
	\frac{\rho}{\rho_0}
	\right)^k \frac{\delta_N}{\rho_0} + \eta_N(k)
	\]
	for $k \in \mathbb{N}$.
	Then
	the inequality \eqref{eq:Phi_rho_power_bound} holds for all $k \in \mathbb{N}$.
	Combining this with
	\eqref{eq:new_operator_norm_prop} and
	\eqref{eq:alpha_0_conv}, we obtain
	\begin{equation}
		\label{eq:Phi_diff_bound}
		\lim_{N \to \infty}
		\sup_{k\in \mathbb{N}_0}\left\| 
		\frac{\Phi^k - \Phi^k_N}{\rho_0^k} \right\|_{\mathcal{L}(W)} 
		\leq \lim_{N \to \infty} \frac{M\delta_N}{\rho_0 - \rho }= 0.
	\end{equation}
	From  \eqref{eq:Phi_diff_bound} and the triangle inequality
	\[
	\left\|\left( \frac{\Phi}{\rho_0}\right)^k\right\|_{\mathcal{L}(W)} \leq 
	\left\| 
	\frac{\Phi^k - \Phi^k_N}{\rho_0^k} \right\|_{\mathcal{L}(W)} + 
	\left\|\left( \frac{\Phi_N}{\rho_0}\right)^k\right\|_{\mathcal{L}(W)},
	\]
	we derive the desired inequality \eqref{eq:Phi_rho_bound}.
\end{proof}

We now turn to the proof of Theorem~\ref{thm:Acl_powered_bound}.

\noindent\hspace{1em}{\itshape {\bf Proof of Theorem~\ref{thm:Acl_powered_bound}:} }
Since  $\mathcal{A}_N = \widetilde \Pi_N \mathcal{A}|_{Z_N}$,
we obtain
\[
\mathcal{A}_N^k  =
(\mathcal{A}_N \widetilde \Pi_N)^k|_{Z_N}
\quad \text{and} \quad 
(\mathcal{A}_N \widetilde \Pi_N)^k  = 
\mathcal{A}_N^k \widetilde \Pi_N
\]
for all $k \in \mathbb{N}_0$. It follows that 
for all $k \in \mathbb{N}_0$,
\begin{equation}
	\label{eq:AN_lower_upper_bound}
	\| \mathcal{A}_N^k \|_{\mathcal{L}(Z_N)} \leq 
	\|(\mathcal{A}_N \widetilde \Pi_N)^k\|_{\mathcal{L}(Z)}
	\leq 
	\| \mathcal{A}_N^k \|_{\mathcal{L}(Z_N)}\hspace{1pt}\| \widetilde \Pi_N\|_{\mathcal{L}(Z)} .
\end{equation}

Let $\rho_0 >\rho$, and define
\[
\widehat{M_1}(\rho_0) \coloneqq \limsup_{N\to \infty} 
\sup_{k \in \mathbb{N}_0}
\left\|
\left(
\frac{
	\mathcal{A}_N
	\widetilde \Pi_N}{\rho_0}
\right)^k
\right\|_{\mathcal{L}(Z)}.
\]
Then,
it follows from \eqref{eq:AN_lower_upper_bound} that
\begin{equation}
	\label{eq:M_inf_inequalities}
	\widehat{M}(\rho_0) \leq \widehat{M_1}(\rho_0) \leq \widehat{M}(\rho_0) \limsup_{N \to \infty} \|\Pi_N\|_{\mathcal{L}(X)}.
\end{equation}
Lemma~\ref{lem:A_ANtilPi_power} with $k = 1$ and 
Lemma~\ref{lem:Phi_k_norm_bound}
give
\begin{align}
	\sup_{k \in \mathbb{N}_0}\left\| \left( \frac{
		\mathcal{A}}{\rho_0}\right)^k
	\right\|_{\mathcal{L}(Z)} 
	\leq \widehat{M_1}(\rho_0)\leq M. \label{eq:widehatM1_2}
\end{align}
Combining  \eqref{eq:M_inf_inequalities} and \eqref{eq:widehatM1_2},
we obtain the desired inequalities \eqref{eq:widehat_M} and 
\eqref{eq:Acl_powered_bound}.
\hspace*{\fill} $\blacksquare$

\section{Sampled-data Regular Linear Systems}
\label{sec:regular_sys}
In this section, we study the problem of stabilizing
sampled-data regular linear systems with
quantization and packet loss.
By discretizing the continuous-time plant  as in \cite[Section~3]{Logemann2013}, we can
apply the discrete-time results developed in Sections~\ref{sec:Quantization} and \ref{sec:norm_approximation}, which cover most aspects of the stabilization problem.
However,
two issues remain to be addressed: the inter-sample behavior of the  sampled-data
system and 
the numerical computation of the feedthrough matrix of the discretized plant.
First, we provide some preliminaries on regular linear systems and time discretization in Section~\ref{sec:prel}. Then
we analyze the inter-sample behavior in Section~\ref{sec:inter_sample}
and present a numerical method to compute 
the feedthrough matrix in 
Section~\ref{sec:comp_regular_case}.

\subsection{Preliminaries}
\label{sec:prel}
\subsubsection{Regular linear systems}
\label{sec:RLS}
We give preliminaries on regular linear systems; see the surveys \cite{Weiss2001, Tucsnak2014} and the book \cite{Staffans2005} for more details.
In Section~\ref{sec:regular_sys},
we consider
a regular linear system $\Sigma$ with state space $X$, input space $\mathbb{C}^m$, and 
output space $\mathbb{C}^p$.
Here $X$ is a nontrivial separable complex Hilbert space with 
inner product $\langle \cdot ,\cdot \rangle$.
Let $A$ be the semigroup generator of $\Sigma$, and 
let $(T(t))_{t \geq 0}$ be the $C_0$-semigroup on $X$ 
generated by $A$.
The control operator and the observation operator of $\Sigma$
are denote by $B$ and $C$, respectively. Let $D$ 
be the feedthrough matrix of $\Sigma$.

We denote by $X_1$ and $X_{-1}$ the interpolation and extrapolation spaces
associated to $(T(t))_{t \geq 0}$, respectively. Let $\lambda \in \varrho(A)$ be arbitrary.
The interpolation space $X_1$ is defined as $\dom (A)$ equipped with
the norm $\| \xi\|_{1} \coloneqq \|(\lambda I - A)\xi\|_X$.
The extrapolation space $X_{-1}$ is the completion of $X$ with
respect to the norm $\|\xi \|_{-1} \coloneqq \|(\lambda I - A)^{-1}\xi \|_X$.
The restrictions of $T(t)$
to $X_1$ form 
a $C_0$-semigroup on $X_1$, and
the generator of the restricted semigroup is the part of $A$ in $X_1$. Moreover,
$(T(t))_{t \geq 0}$ can be uniquely extended to a $C_0$-semigroup on $X_{-1}$, and
the generator of the extended semigroup is an extension of $A$ with 
domain $X$. For simplicity,
the same symbols are used for the restrictions and extensions of $(T(t))_{t \geq 0}$ and $A$.
Further  information on the interpolation and extrapolation spaces
can be found in
\cite[Section~3.6]{Staffans2005}
and  \cite[Section~2.10]{Tucsnak2009}.

The control operator $B$ and 
the observation operator $C$
belong  to  $\mathcal{L}(\mathbb{C}^m,X_{-1})$ and $ \mathcal{L}(X_1,\mathbb{C}^p)$, respectively. Furthermore, $B$ and $C$ are admissible
for $(T(t))_{t \geq 0}$; see, e.g.,
\cite[Chapter~10]{Staffans2005} and 
\cite[Chapter~4]{Tucsnak2009}
for the details of admissibility.
The $\Lambda$-extension $C_{\Lambda}$ of $C$ is defined by
\[
C_{\Lambda} \xi \coloneqq 
\lim_{\substack{\zeta \to \infty \\ \zeta \in \mathbb{R}}} C\zeta( \zeta I-A)^{-1}\xi
\]
with domain $\dom (C_{\Lambda})$ consisting of those $\xi \in X$ for which the limit exists.

Let $x$ and $y$
be the state and output, respectively, of the regular linear 
system $\Sigma$ with 
initial condition $x(0)=x^0 \in X$ and input $u \in L^2_{\rm loc}(\mathbb{R}_+,\mathbb{C}^m)$. 
Then
\begin{equation}
	\label{eq:solution_diff}
	x(t) = T(t)x^0 + \int^t_0 T(t-s) Bu(s) ds
\end{equation}
for all $t \geq 0$. In addition,
$x(t)  \in \dom (C_{\Lambda})$ for a.e. $t \geq 0$, and
\begin{equation}
	\label{eq:RLS}
	\left\{
	\begin{aligned}
		\dot x(t) &= Ax(t) + Bu(t);\quad x(0) = x^0,\\
		y(t) &= C_{\Lambda} x(t) + Du(t)
	\end{aligned}
	\right.
\end{equation}
for a.e. $t \geq 0$,
where the differential equation above is interpreted on $X_{-1}$.
The  input-output operator $G\colon 
L^2_{\rm loc}(\mathbb{R}_+,\mathbb{C}^m) \to 
L^2_{\rm loc}(\mathbb{R}_+,\mathbb{C}^p)$ of the regular linear system $\Sigma$ is given by
\begin{equation}
	\label{eq:IOmap_formula}
	(Gu)(t) = C_{\Lambda}
	\int^t_0 T(t-s) Bu(s) ds + Du(t)
\end{equation}
for all $u \in L^2_{\rm loc}(\mathbb{R}_+,\mathbb{C}^m)$ and a.e. $t\geq0$.
\subsubsection{Time discretization}
\label{sec:time_disc}
Let $\tau >0$ denote the sampling period.
The zero-order hold operator $\mathcal{H}_{\tau}\colon F(\mathbb{N}_0,\mathbb{C}^m)\to 
L^2_{\rm loc}(\mathbb{R}_+,\mathbb{C}^m)$ is defined by
\begin{equation}
	\label{eq:hold_def}
	(\mathcal{H}_{\tau}f)(k\tau + t) \coloneqq f(k),\quad t \in [0,\tau),\,k \in \mathbb{N}_0.
\end{equation}
The generalized sampling operator $\mathcal{S}_{\tau}\colon  L^2_{\rm loc}(\mathbb{R}_+,\mathbb{C}^p)
\to F(\mathbb{N}_0,\mathbb{C}^p)$ is defined by
\begin{equation}
	\label{eq:sampler_def}
	(\mathcal{S}_{\tau}g) (k) \coloneqq \int^\tau_0 w(t) g(k\tau+t) dt,\quad k \in \mathbb{N}_0,
\end{equation}
where  $w \in L^2(0,\tau)$ is a weighting function.

Now we construct the discretized plant consisting of  the regular linear system \eqref{eq:RLS},
the zero-order hold $\mathcal{H}_{\tau}$, and
the generalized sampler $\mathcal{S}_{\tau}$.
Define the operators $A_{\tau} \in \mathcal{L}(X)$, $B_\tau \in \mathcal{L} (\mathbb{C}^m,X)$, $C_{\tau} \in \mathcal{L}(X,\mathbb{C}^p)$, and
$D_{\tau} \in \mathcal{L}(\mathbb{C}^m,\mathbb{C}^p)$ by
\begin{align*}
	A_{\tau} &\coloneqq T(\tau), \\
	B_{\tau} v &\coloneqq \int^\tau_0 T(t) Bvdt,\quad v \in \mathbb{C}^m, \\
	C_{\tau}\xi &\coloneqq \int^\tau_0 w(t) C_{\Lambda} T(t) \xi dt,\quad  \xi\in X, \\
	D_{\tau} v &\coloneqq \int^\tau_0 w(t)(G(v \chi_{[0,\tau]}) )(t)dt,\quad v \in \mathbb{C}^m,
\end{align*}
where $\chi_{[0,\tau]}$ is the indicator function of the interval $[0,\tau]$.
Let $x^0\in X$ and 
$u = \mathcal{H}_{\tau}u_{\rm c}$, where $u_{\rm c} \in F(\mathbb{N}_0,\mathbb{C}^p)$.
The state $x$  of the regular linear system $\Sigma$, given in 
\eqref{eq:solution_diff}, satisfies
\begin{align}
	\label{eq:discretization_plant}
	\left\{
	\begin{aligned}
		x((k+1)\tau) &= A_\tau x(k\tau) + B_\tau u_{\rm c}(k), \\
		y_{\rm c}(k) &= C_\tau x(k\tau) + D_\tau u_{\rm c}(k)
	\end{aligned}
	\right.
\end{align}
for all $k \in \mathbb{N}_0$, where 
$y_{\rm c} \coloneqq \mathcal{S}_\tau y$; see \cite[Lemma~2]{Logemann2013}
for details. The system \eqref{eq:discretization_plant} is called 
the {\em discretized plant}.

\subsection{Inter-sample behavior of sampled-data systems with
	quantization and 
	packet loss}
\label{sec:inter_sample}
When there is no quantization or packet loss,
we consider the following discrete-time controller:
\begin{equation}
	\label{eq:digital_controller}
	\left\{
	\begin{aligned}
		x_{\rm c}(k+1)
		&= Px_{\rm c}(k) + Qy_{\rm c}(k),\\
		u_{\rm c}(k) &= R x_{\rm c}(k)
	\end{aligned}
	\right.
\end{equation}
for $k \in \mathbb{N}_0$
with initial state $x_{\rm c}(0) = x^0_{\rm c} \in X_{\rm c}$,
where $X_{\rm c}$ is a nontrivial complex Hilbert space, 
$P \in \mathcal{L}(X_{\rm c})$, 
$Q\in \mathcal{L}(\mathbb{C}^p, X_{\rm c})$, and 
$R \in \mathcal{L}(X_{\rm c},\mathbb{C}^m)$. 
The continuous-time plant \eqref{eq:RLS} is connected to
the discrete-time controller \eqref{eq:digital_controller} via
the following sampled-data feedback law:
\[
u = \mathcal{H}_{\tau}u_{\rm c}\quad \text{and} \quad  y_{\rm c} = \mathcal{S}_\tau y.
\]
A controller that uses the sampled output generated by $\mathcal{S}_\tau$  must be
strictly causal, i.e., must have no feedthrough term. This is because
the $k$-th sampled output is
generated from the plant output on the interval $[k\tau, (k+1)\tau)$.

Consider the setting where 
the closed-loop sampled-data system consisting of the discretized plant  \eqref{eq:discretization_plant}
and the controller  \eqref{eq:digital_controller} is subject to 
quantization and packet loss
as in Section~\ref{sec:zero_compensation}.
The next proposition shows that
the state $x$ of the plant 
exponentially converges to zero between
sampling instants if the convergence properties at sampling instants
obtained in Theorem~\ref{thm:exp_conv} are satisfied.
The same result holds for the other settings
studied in Sections~\ref{sec:exp_conv_analysis_hold}
and \ref{sec:sim_packet}.
\begin{proposition}
	\label{prop:dist_cont_stability}
	For the discretized plant \eqref{eq:discretization_plant} and the controller 
	\eqref{eq:digital_controller},
	let the closed-loop system with quantization 
	and packet loss be constructed as in Section~\ref{sec:zero_compensation},
	where the zoom parameters
	$\mu_{\inc}$ and $\mu_{\out}$ are defined by \eqref{eq:E_difference_eq}.
	If statements~a) and b) of Theorem~\ref{thm:exp_conv} hold, then
	there exist constants $\Omega_x \geq 1$ and $\omega>0$,
	independent of $E_0$, such that 
	\[
	\|x(t)\|_X \leq \Omega_x e^{-\omega t} E_0 
	\]
	for all $t \geq 0$ and $z^0 \in Z$ with $\|z^0\|_Z \leq E_0$.
\end{proposition}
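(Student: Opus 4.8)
The plan is to extend the sampling-time estimate supplied by Theorem~\ref{thm:exp_conv} to all $t \geq 0$ by controlling the inter-sample evolution through the variation-of-parameters formula \eqref{eq:solution_diff}. Writing $t = k\tau + s$ with $k \in \mathbb{N}_0$ and $s \in [0,\tau)$, and recalling that the plant input is the held quantized signal $q_{\inc}(k)$ on $[k\tau,(k+1)\tau)$, formula \eqref{eq:solution_diff} gives
\[
x(k\tau + s) = T(s)x(k\tau) + \int_0^s T(s-r)\,B q_{\inc}(k)\,dr .
\]
First I would bound the two terms separately. The strong continuity of $(T(t))_{t\geq 0}$ makes $M_T \coloneqq \sup_{s\in[0,\tau]}\|T(s)\|_{\mathcal{L}(X)}$ finite. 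For the convolution term, the admissibility of the control operator $B$ supplies a constant $c_\tau>0$ such that $\|\int_0^s T(s-r)B u(r)\,dr\|_X \leq c_\tau\,\|u\|_{L^2(0,s;\mathbb{C}^m)}$ for every $s\in[0,\tau]$; specializing to the constant input $u\equiv q_{\inc}(k)$ yields $\|\int_0^s T(s-r)B q_{\inc}(k)\,dr\|_X \leq c_\tau\sqrt{\tau}\,\|q_{\inc}(k)\|_{\mathbb{C}^m}$ uniformly in $s\in[0,\tau]$.

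Next I would express $\|x(k\tau)\|_X$ and $\|q_{\inc}(k)\|_{\mathbb{C}^m}$ in terms of $\mu(k)$. Statement~a) of Theorem~\ref{thm:exp_conv} gives $\|z(k)\|_Z \leq \mu(k)$, whence $\|x(k\tau)\|_X \leq \mu(k)$ for either choice of product norm on $Z$; its proof also establishes $\|u(k)\|_{\mathbb{C}^m} \leq \mu_{\inc}(k)$. Combining this with the quantization bound \eqref{eq:qe_u} gives $\|q_{\inc}(k)\|_{\mathbb{C}^m} \leq (1+\Delta_{\inc})\mu_{\inc}(k) = (1+\Delta_{\inc})\|R\|_{\mathcal{L}(X_{\rm c},\mathbb{C}^m)}\,\mu(k)$. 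Substituting these two estimates into the displayed identity produces $\|x(k\tau+s)\|_X \leq C_1\,\mu(k)$ for all $s\in[0,\tau]$, where $C_1 \coloneqq M_T + c_\tau\sqrt{\tau}\,(1+\Delta_{\inc})\|R\|_{\mathcal{L}(X_{\rm c},\mathbb{C}^m)}$ is independent of $E_0$.

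Finally I would invoke statement~b), namely $\mu(k) \leq \Omega_\mu \gamma^k \mu(0) = \Omega_\mu M E_0\,\gamma^k$, and convert the geometric decay in $k$ into exponential decay in $t$. Since $k = \lfloor t/\tau\rfloor \geq t/\tau - 1$ and $\gamma\in(0,1)$, we have $\gamma^k \leq \gamma^{-1} e^{-\omega t}$ with $\omega \coloneqq -(\log\gamma)/\tau > 0$. This yields $\|x(t)\|_X \leq \Omega_x e^{-\omega t} E_0$ with $\Omega_x \coloneqq \max\{1,\,C_1\Omega_\mu M/\gamma\}$, as claimed. The main obstacle is the inter-sample bound: one must show that the convolution operator $v\mapsto \int_0^s T(s-r)Bv\,dr$ is bounded into $X$ uniformly for $s\in[0,\tau]$, which is not automatic because $B$ only maps into the extrapolation space $X_{-1}$. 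The admissibility of $B$ is exactly what rescues the argument, via a time-shift reduction to the finite-time admissibility estimate on $[0,\tau]$; the remaining steps are routine bookkeeping of constants. The same reasoning applies verbatim to the settings of Sections~\ref{sec:exp_conv_analysis_hold} and \ref{sec:sim_packet}, since there too the held plant input obeys a bound of the form $\|q_{\inc}(k)\|_{\mathbb{C}^m}\leq \mathrm{const}\cdot\mu(k)$.
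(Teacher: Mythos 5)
Your proposal is correct and follows essentially the same route as the paper's proof: the variation-of-parameters formula \eqref{eq:solution_diff} on each sampling interval, the admissibility of $B$ to bound the convolution term by $\sqrt{\tau}\,M_B\,\|q_{\inc}(k)\|$, the bounds $\|x(k\tau)\|_X \leq \mu(k)$ and $\|q_{\inc}(k)\| \leq (1+\Delta_{\inc})\mu_{\inc}(k)$ from statement~a) and \eqref{eq:qe_u}, and statement~b) to convert the geometric decay of $\mu$ into exponential decay in $t$. Your version merely makes explicit the final bookkeeping ($\omega = -(\log\gamma)/\tau$, $\Omega_x = \max\{1,\,C_1\Omega_\mu M/\gamma\}$) that the paper compresses into its closing sentence.
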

\begin{proof}
	Let $k \in \mathbb{N}_0$.
	By \eqref{eq:solution_diff}, 
	\[
	x(k\tau+t) = T(t)x(k\tau) + \int^t_0 T(t-s)Bq_{\inc}(k)ds
	\]
	for all $t \in [0,\tau)$.
	By the admissibility of $B$,
	there exists $M_B>0$ such that 
	\[
	\left\| \int^t_0 T(t-s)Bq_{\inc}(k)ds
	\right\|_X \leq \sqrt{\tau} M_B \|q_{\inc}(k)\|_U
	\]
	for all $t \in [0,\tau)$.
	Since $\|u(k)\|_U \leq \mu_{\inc}(k)$, it follows from \eqref{eq:qe_u} that 	
	$\|q_{\inc}(k)\|_U \leq 
	(1+\Delta_{\inc})
	\mu_{\inc}(k)$. 	Define $
	M_T \coloneqq \sup_{0\leq t < \tau}  \|T(t)\|_{\mathcal{L}(X)}
	$. Then
	\begin{align*}
		\|x(k\tau+t)\|_X \leq M_T \|x(k\tau)\|_X + \sqrt{\tau}(1+\Delta_{\inc}) M_B
		\mu_{\inc}(k)
	\end{align*}
	for all $t \in [0,\tau)$.
	Thus,
	the convergence properties of $x(k\tau)$ and $\mu_{\inc}(k)$ yield
	the desired conclusion.
\end{proof}

\begin{remark}
	\label{rem:finite_dim_case_SD}
	Combining
	\cite[Proposition~3 and Theorem~9]{Logemann2013} 
	yields
	a necessary and sufficient condition
	for the existence of  a controller \eqref{eq:digital_controller}
	that satisfies Assumption~\ref{assump:power_stability} 
	for discretized plant \eqref{eq:discretization_plant}.
	Essentially, this condition is comprised of
	the controllability and observability of the unstable part of the 
	plant and the choice of a non-pathological sampling period;
	see \cite[statement (1) of Theorem~9]{Logemann2013} for
	further details.
	Moreover, it has been shown in
	\cite[Theorem~9]{Logemann2013} that 
	there exists a stabilizing controller if and only
	if there exists a {\em finite-dimensional} stabilizing controller,
	which implies that 
	the use of finite-dimensional controllers does not require
	additional conditions on the plant in our
	sampled-data stabilization problem.
	In the proof of this theorem,
	a design method for
	finite-dimensional stabilizing controllers
	has also been provided.
	\hspace*{\fill} $\triangle$ 
\end{remark}

\subsection{Series representation of feedthrough matrices}
\label{sec:comp_regular_case}
For the quantizer design,
it remains to compute the feedthrough matrix $D_{\tau}$.
Note that even when the feedthrough matrix $D$ of the regular linear  
system $\Sigma$
is zero, the
feedthrough matrix $D_{\tau}$ of the discretized plant \eqref{eq:discretization_plant}
is nonzero in general 
due to generalized sampling.
Although $D_{\tau}$ is a matrix, 
the input-output operator $G$
is used in the definition of $D_{\tau}$, which introduces
computational challenges.

Throughout Section~\ref{sec:comp_regular_case}, we assume that 
the semigroup generator $A$ of 
the regular linear system $\Sigma$ 
has the following spectral expansion as in Example~\ref{ex:diagonalizable}:
\begin{equation}
	\label{eq:A_rep}
	A\xi = \sum_{n=1}^{\infty} \lambda_n \langle \xi, \psi_n \rangle \phi_n,\quad \xi \in \dom(A),
\end{equation}
and
\begin{equation}
	\label{eq:A_domain_rep}
	\dom(A) = \left\{
	\xi \in X : \sum_{n=1}^{\infty} |\lambda_n|^2\hspace{1pt} |\langle \xi, \psi_n \rangle|^2 < \infty 
	\right\},
\end{equation}
where $(\lambda_n)_{n \in \mathbb{N}}$ is a sequence of complex numbers,
$(\phi_n)_{n \in \mathbb{N}}$ is a Riesz basis in $X$, 
$(\psi_n)_{n \in \mathbb{N}}$ is a biorthogonal sequence to $(\phi_n)_{n \in \mathbb{N}}$. 
Let 
the control operator $B \in \mathcal{L}(\mathbb{C}^m,X_{-1})$ and 
the observation operator $C \in \mathcal{L}(X_1,\mathbb{C}^p)$
of the regular linear 
system $\Sigma$ be given by
\begin{equation}
	\label{eq:BC_rep}
	B u = \sum_{\ell=1}^m b_\ell v_{\ell},~ v =
	\begin{bmatrix}
		v_1 \\ \vdots \\ v_m
	\end{bmatrix}
	\in \mathbb{C}^m;~~  Cx = 
	\begin{bmatrix}
		c_1 x \\ \vdots \\ c_p x
	\end{bmatrix}\hspace{-3pt},~ x \in X_1,
\end{equation}
where $b_1,\dots,b_n \in X_{-1}$ and $c_1,\dots,c_p \in \mathcal{L}(X_1,\mathbb{C})$.

Let $X_1^{\rm d}$ denote $\dom (A^*)$ endowed with the norm 
$\|\xi\|_1^{\rm d}\coloneqq \|(\overline{\lambda} I - A^*)\xi \|_X$ 
for $\overline{\lambda} \in \varrho(A^*)$.
Then $X_{-1}$ can be identified with 
the dual of $X_{1}^{\rm d}$ with respect to the pivot space $X$.
We denote by $\langle \xi_1,\xi_{-1}\rangle_{X_1^{\rm d},X_{-1}}$
the functional $\xi_{-1} \in X_{-1}$ applied to $\xi_1 \in X_1^{\rm d}$, so that
$\langle \cdot, \cdot \rangle_{X_1^{\rm d},X_{-1}}$
is linear in the first variable and antilinear in the second variable.
We also define 
\[
\langle \xi_{-1}, \xi_1\rangle_{X_{-1},X_1^{\rm d}} \coloneqq 
\overline{\langle \xi_1,\xi_{-1}\rangle_{X_1^{\rm d},X_{-1}}}
\]
for $\xi_{-1} \in X_{-1}$ and  $\xi_1 \in X_1^{\rm d}$.
If $\xi \in X$ and $\xi_1 \in X_{1}^{\rm d}$, then
$\langle \xi, \xi_1 \rangle_{X_{-1},X_1^{\rm d}} =
\langle \xi, \xi_1 \rangle$.
We refer to \cite[Sections~2.9 and 2.10]{Tucsnak2009} for details.

Define
\begin{equation}
	\label{eq:Bnl_Cjn_def}
	B_{n\ell} \coloneqq \langle
	b_{\ell},\psi_n 
	\rangle_{X_{-1},X_1^{\rm d}}\quad \text{and}\quad 
	C_{jn} \coloneqq c_j \phi_n
\end{equation}
for $n \in \mathbb{N}$, $\ell \in \{1,\dots, m \}$, and $j \in \{1,\dots,p  \}$.
Under certain assumptions on unboundedness of $B$ and $C$,
we obtain
a series 
representation of $D_{\tau}$, which is useful to numerically compute the matrix 
$D_{\tau}$.
\begin{theorem}
	\label{thm:discrete_feedthrough}
	Assume that the generating operators $A$, $B$, and $C$ of the regular linear 
	system $\Sigma$ satisfy
	the following conditions:
	\begin{enumerate}
		\renewcommand{\labelenumi}{\alph{enumi})}
		\item $A$ is given by \eqref{eq:A_rep} and \eqref{eq:A_domain_rep}, and  $(\lambda_n)_{n \in \mathbb{N}}$ satisfies
		$\liminf_{n \to \infty} |\lambda_n| >0$.
		\item
		$B$ and 
		$C$ are as in
		\eqref{eq:BC_rep}, and 
		there exist constants 
		$\beta,\gamma \geq 0$ with $\beta + \gamma \leq 1$ such that 
		\begin{equation}
			\sum_{n=1}^\infty 
			\frac{|B_{n\ell}|^2}{1+|\lambda_n|^{2\beta}} 
			< \infty \quad \text{and} \quad 
			\sum_{n=1}^\infty 
			\frac{|C_{jn}|^2}{1+|\lambda_n|^{2\gamma}} < \infty
			\label{eq:c_cond2}
		\end{equation}
		for all
		$\ell \in \{1,\dots, m \}$ and $j \in \{1,\dots,p  \}$, where
		$B_{n\ell}$ and $C_{jn}$ are defined by \eqref{eq:Bnl_Cjn_def}.
	\end{enumerate}
	Then the $(j,\ell)$-entry $(D_{\tau})_{j\ell}$ of 
	the feedthrough matrix $D_{\tau}$ defined as in Section~\ref{sec:inter_sample}
	has the following series representation for all
	$\ell \in \{1,\dots, m \}$ and $j \in \{1,\dots,p  \}$:
	\begin{equation}
		\label{eq:Dtau_jl_element}
		(D_{\tau})_{j\ell} = \sum_{n=1}^{\infty}
		C_{jn} B_{n\ell}
		\int^\tau_0 w(t)\varpi_n(t) dt + 
		D_{j\ell}\int^\tau_0 w(t) dt,
	\end{equation}
	where 
	\[
	\varpi_n(t) \coloneqq 
	\int^t_0 e^{s \lambda_n}ds,\quad 0\leq t \leq \tau,~n\in \mathbb{N}.
	\]
	\vspace{-5pt}
\end{theorem}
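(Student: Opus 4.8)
The plan is to evaluate $(D_{\tau})_{j\ell}$ directly from its definition by feeding the constant input $e_{\ell}\chi_{[0,\tau]}$ into the input--output operator $G$ and exploiting the regularity formula \eqref{eq:IOmap_formula}. Writing $v(t)\coloneqq\int_0^t T(t-s)b_{\ell}\,ds$ and recalling $Be_\ell=b_\ell$, the regularity of $\Sigma$ gives $v(t)\in\dom(C_{\Lambda})$ for a.e.\ $t$ and
\[
(D_{\tau})_{j\ell}=\int_0^{\tau}w(t)\big(c_j v(t)+D_{j\ell}\big)\,dt,
\]
where $c_j$ acts through the $\Lambda$-extension $C_{\Lambda}$. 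Thus the theorem reduces to two tasks: (i) establishing the spectral series $c_j v(t)=\sum_{n}C_{jn}B_{n\ell}\varpi_n(t)$, and (ii) interchanging this sum with the integral against $w$.

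For the spectral series I would first expand $v(t)$ itself in $X$. Since $(\phi_n)_{n\in\mathbb{N}}$ are eigenvectors of $A$ and the biorthogonal sequence $(\psi_n)_{n\in\mathbb{N}}\subset\dom(A^*)=X_1^{\rm d}$ consists of eigenvectors of $A^*$ with eigenvalue $\overline{\lambda_n}$, the extended semigroup satisfies $T(t)^*\psi_n=e^{t\overline{\lambda_n}}\psi_n$. Using the $X_{-1}$--$X_1^{\rm d}$ duality and the antilinearity of the pairing in its second argument, a direct computation gives
\[
\langle v(t),\psi_n\rangle=\int_0^t \langle b_{\ell},T(t-s)^*\psi_n\rangle_{X_{-1},X_1^{\rm d}}\,ds=\int_0^t e^{(t-s)\lambda_n}\,ds\;B_{n\ell}=\varpi_n(t)\,B_{n\ell},
\]
so that $v(t)=\sum_{n}\varpi_n(t)B_{n\ell}\phi_n$ with convergence in $X$.

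The main obstacle is step~(i): because the partial sums converge only in $X$ while $C_{\Lambda}$ (equivalently each $c_j$) is unbounded on $X$, one cannot pass $c_j$ through the sum by continuity alone. I would resolve this through the defining limit $C_{\Lambda}\xi=\lim_{\zeta\to\infty}C\zeta(\zeta I-A)^{-1}\xi$ together with the summability hypotheses, which is exactly where conditions~a) and~b) enter. From $\varpi_n(t)=(e^{t\lambda_n}-1)/\lambda_n$, the $C_0$-semigroup growth bound $\sup_n\re\lambda_n<\infty$ and the hypothesis $\liminf_{n\to\infty}|\lambda_n|>0$ yield a uniform estimate $|\varpi_n(t)|\le C/|\lambda_n|$ on $[0,\tau]$ (the finitely many small or vanishing $\lambda_n$ being harmless). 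Splitting $1/|\lambda_n|\le C'/|\lambda_n|^{\beta+\gamma}$ via $\beta+\gamma\le1$ and $\liminf|\lambda_n|>0$, the Cauchy--Schwarz inequality gives
\[
\sum_n |C_{jn}|\,|B_{n\ell}|\,|\varpi_n(t)|\le CC'\Big(\sum_n\tfrac{|C_{jn}|^2}{|\lambda_n|^{2\gamma}}\Big)^{1/2}\Big(\sum_n\tfrac{|B_{n\ell}|^2}{|\lambda_n|^{2\beta}}\Big)^{1/2}<\infty
\]
by \eqref{eq:c_cond2}, and the bound is uniform in $t\in[0,\tau]$. This absolute, uniform convergence both identifies the termwise sum with $c_j v(t)$ (by applying the resolvent/$\Lambda$-extension limit to the convergent series) and, through dominated convergence, legitimizes exchanging $\sum_n$ with $\int_0^{\tau}w(t)(\cdot)\,dt$; carrying out this exchange produces precisely \eqref{eq:Dtau_jl_element}. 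I expect the delicate point to be making the identification in step~(i) fully rigorous, i.e.\ commuting $C_{\Lambda}$ with the infinite series rather than merely with finite truncations; the rest is the Cauchy--Schwarz estimate above.
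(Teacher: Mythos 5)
Your proposal is correct and follows essentially the same route as the paper's proof: both expand $\int_0^t T(s)b_\ell\,ds$ in the Riesz basis via the $X_{-1}$--$X_1^{\rm d}$ duality to get the coefficients $B_{n\ell}\varpi_n(t)$, handle the unboundedness of $c_j$ through the defining limit $C_\Lambda\xi=\lim_{\zeta\to\infty}C\zeta(\zeta I-A)^{-1}\xi$ with the uniform bound $|\zeta/(\zeta-\lambda_n)|$ and the Cauchy--Schwarz estimate from $\beta+\gamma\le 1$ and \eqref{eq:c_cond2}, and then interchange sum and integral by dominated convergence. The ``delicate point'' you flag is resolved in the paper exactly as you sketch: apply the bounded operator $c_j(\zeta I-A)^{-1}$ termwise to the $X$-convergent series and pass $\zeta\to\infty$ under the sum using the uniform absolute summability of $C_{jn}B_{n\ell}\varpi_n(t)$ as the dominating bound.
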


\begin{proof}
	Let	$\ell \in \{1,\dots, m \}$ and $j \in \{1,\dots,p  \}$.
	By
	\eqref{eq:IOmap_formula},
	the 
	$(j,\ell)$-entry $(D_{\tau})_{j\ell}$ of the feedthrough matrix $D_{\tau}$ can be written as
	\begin{equation}
		\label{eq:D_tau_rep}
		(D_{\tau})_{j\ell}  = 
		\int^\tau_0 w(t) H_{j\ell}(t) dt + 
		D_{j\ell}\int^\tau_0 w(t) dt,
	\end{equation}
	where
	\begin{equation}
		\label{eq:Hjl_def}
		H_{j\ell}(t) \coloneqq
		\lim_{\substack{\zeta  \to \infty \\ \zeta  \in \mathbb{R}}}
		c_j\zeta(\zeta I - A)^{-1}\int^t_0 T(s) b_{\ell} ds
	\end{equation}
	for a.e.~$t \geq 0$.
	
	First, we obtain a series representation
	\begin{equation}
		\label{eq:Tb}
		\int^t_0 T(s)b_{\ell} ds = \sum_{n=1}^{\infty} 
		B_{n\ell} \varpi_n(t)\phi_n
	\end{equation}
	for all $t \in [0,\tau]$.
	The basic fact on a Riesz basis given, e.g., in
	\cite[Lemma~3.2.4]{Curtain2020} and \cite[Proposition~2.5.2]{Tucsnak2009}
	leads to
	\begin{equation}
		\label{eq:g_ell_expansion}
		\int^t_0 T(s)b_{\ell} ds = \sum_{n=1}^{\infty} \left\langle 
		\int^t_0 T(s)b_{\ell} ds, \psi_n
		\right\rangle \phi_n.
	\end{equation}
	One can show that 
	\begin{align*}
		\left\langle
		\int^t_0 T(s) \xi_{-1} ds, \xi
		\right\rangle = 
		\left\langle
		\xi_{-1}, \int^t_0 T(s)^* \xi ds
		\right\rangle_{X_{-1},X_{1}^{\rm d}}
	\end{align*}
	for all $\xi_{-1} \in X_{-1}$ and $\xi \in X$. Hence, 
	\begin{align*}
		\left\langle
		\int^t_0 T(s) b_{\ell} ds, \psi_n
		\right\rangle =
		B_{n\ell} \varpi_n(t).
	\end{align*}
	Substituting this to \eqref{eq:g_ell_expansion}, we derive 
	the series representation \eqref{eq:Tb}.
	
	Next, we show that  
	\begin{equation}
		\label{eq:Hjell_series_rep}
		H_{j\ell}(t) = 
		\sum_{n=1}^\infty C_{jn}  B_{n\ell}
		\varpi_n(t) 
	\end{equation}
	for a.e.~$t \in [0,\tau]$,
	where $H_{j\ell}$ is defined by \eqref{eq:Hjl_def}.
	Let $\zeta \in \varrho(A) \cap \mathbb{R}_+$.
	Since $c_j(\zeta I-A)^{-1} \in \mathcal{L}(X,\mathbb{C})$, 
	it follows from \eqref{eq:Tb} that
	\[
	c_j (\zeta I - A)^{-1}  \int^t_0 T(s) b_{\ell} ds =
	\sum_{n=1}^{\infty} \frac{
		C_{jn}B_{n\ell}
		\varpi_n(t) }{\zeta - \lambda_n}
	\]
	for all $t \in [0,\tau]$.
	By assumption, there exists $n_1 \in \mathbb{N}$
	such that $\delta \coloneqq \inf_{n \geq n_1} |\lambda_n| >0$.
	For all $t \in [0,\tau]$ and $n \geq n_1$, we have
	\[
	|\varpi_n(t)| \leq \frac{1+\kappa}{|\lambda_n|},
	\quad \text{where $\kappa \coloneqq 
		\max\{1,\, e^{\tau \sup_{n \in \mathbb{N}} \re \lambda_n}\}$}.
	\]
	Since $\beta + \gamma \leq 1$,
	the Cauchy--Schwarz inequality gives
	\begin{align*}
		&\sum_{n=n_1}^\infty 
		\left| C_{jn}  B_{n\ell}
		\varpi_n(t) 
		\right|   \leq 
		\frac{1+\kappa }{\delta^{1-\beta-\gamma}} 
		\sqrt{\sum_{n=n_1}^\infty 
			\frac{
				| C_{jn}|^2}{|\lambda_n|^{2\gamma}}
			\sum_{n=n_1}^\infty 
			\frac{
				|  B_{n\ell} 
				|^2}{|\lambda_n|^{2\beta}} }
	\end{align*}
	for all $t \in [0,\tau]$.
	Hence,
	by the conditions given in \eqref{eq:c_cond2}, there exists $M_1 >0$
	such that 
	\begin{equation}
		\label{eq:c_phi_b_psi_bound}
		\sum_{n=1}^\infty 
		\left|C_{jn}  B_{n\ell}
		\varpi_n(t) 
		\right| \leq M_1
	\end{equation}
	for all $t \in [0,\tau]$.
	If $\zeta \geq 2\sup_{n \in \mathbb{N}} \re \lambda_n$, then
	$|\zeta /(\zeta  - \lambda_n)| \leq 2$ for all $n \in \mathbb{N}$. 
	The series representation \eqref{eq:Hjell_series_rep}
	follows from
	the dominated convergence theorem.

	For $t \in [0,\tau]$ and $N \in \mathbb{N}$, define
	\[
	H_{j\ell,N}(t) \coloneqq
	\sum_{n=1}^N  C_{jn}  B_{n\ell}
	\varpi_n(t).
	\]
	From \eqref{eq:c_phi_b_psi_bound},
	it follows that $|w(t)H_{j\ell,N}(t)| \leq M_1 |w(t)|$
	for all $t \in (0,\tau)$ and $N \in \mathbb{N}$. Therefore,
	the dominated convergence theorem yields
	\begin{align*}
		\int^\tau_0 w(t) H_{j\ell}(t)dt 
		&=\sum_{n=1}^\infty  C_{jn}  B_{n\ell}
		\int^{\tau}_0 w(t)\varpi_n(t) dt.
	\end{align*}
	Substituting this into \eqref{eq:D_tau_rep}, we obtain the desired series
	representation
	\eqref{eq:Dtau_jl_element}.
\end{proof}

\section{Simulation results}
\label{sec:example}
In this section, 
we consider the one-dimensional heat equation with boundary control and observation.
We present numerical results on the relationship between quantization coarseness and packet-loss duration.
Time responses of the quantized sampled-data system with packet loss are also shown. 
\subsection{Closed-loop system}
\subsubsection{Plant and controller}
Consider a metal bar of length one, which is insulated at the right end $\xi = 1$.
The temperature in the bar is controlled through heat flux
acting at the left end $\xi = 0$. We measure
the temperature at the right end $\xi = 1$.
Let $\zeta(\xi,t)$ denote the temperature of the bar at position $\xi \in [0,1]$
and time $t \geq 0$.
The model of the temperature distribution is given by
\begin{equation*}
	\left\{
	\begin{alignedat}{2}
		\frac{\partial \zeta}{\partial t}(\xi,t) &= \frac{\partial^2 \zeta}{\partial \xi^2}(\xi,t);\qquad 
		&&\hspace{-30pt}\zeta(\xi,0) = \zeta^0(\xi), \\
		\frac{\partial \zeta}{\partial \xi}(0,t) &= -u(t),
		\quad \frac{\partial \zeta}{\partial \xi}(1,t) = 0, \\
		y(t) &= \zeta(1,t); &&\hspace{-30pt} \xi \in [0,1],~t \geq 0.
	\end{alignedat}
	\right.
\end{equation*}
It is known that
this system is a regular linear system with state space $X = L^2(0,1)$, input space $\mathbb{C}$, and
output space $\mathbb{C}$. The generating operators $(A,B,C,D)$ 
are given by
$
Af = f''
$
with domain 
\[
\dom(A) = \{f \in W^{2,2}(0,1): f'(0) = 0,~ f'(1) = 0 \},
\]
$Bu = u\delta_{0}$ for $u \in \mathbb{C}$,
$Cf = f(1)$ for $f \in \dom (A)$, and 
$D= 0$,
where  $\delta_{0}$  is  the delta function supported at the point $\xi =0$.

The eigenvalues of $A$ are 
$\lambda_n  \coloneqq -(n-1)^2\pi^2$ for $n \in \mathbb{N}$.
The eigenvector $\phi_n \in L^2(0,1)$ of $A$ with the
eigenvalue $\lambda_n$ is given by
$\phi_1(\xi)\equiv 1$ and
$
\phi_n(\xi) = \sqrt{2} \cos((n-1)\pi \xi)
$
for $\xi \in [0,1]$ and $n \geq 2$.
The sequence $(\phi_n)_{n \in \mathbb{N}}$ forms an orthonormal basis
in $X$. Hence, Assumptions~\ref{assump:A_Pi}
and \ref{assump:finite_dimensional} are satisfied, where
the projection operator $\Pi_N$ is as in \eqref{eq:Pi_ex}
for each $N \in \mathbb{N}$.
We also have $ \|
\Pi_N
\|_{\mathcal{L}(X)} = 1$ for all $N \in \mathbb{N}$.

Let the sampling period $\tau$ be $\tau = 0.1$, and 
let the weighting function $w$ for the sampler \eqref{eq:sampler_def} be 
$w(t) \equiv 1/\tau$.
Following the construction method of stabilizing controllers
given in the proof of \cite[Theorem~9]{Logemann2013}, we obtain
the discrete-time controller \eqref{eq:digital_controller} with $(P,Q,R) = (0.445,0.3,-3)$. Then 
the ideal closed-loop operator consisting of the discretized plant  \eqref{eq:discretization_plant}
and the controller  \eqref{eq:digital_controller},
\[
\mathcal{A}_{\rm id} \coloneqq
\begin{bmatrix}
	A_{\tau} & B_{\tau}R  \\ QC_{\tau} & P+QD_{\tau}R
\end{bmatrix},
\] 
satisfies $\sup_{k \in \mathbb{N}}\|(\mathcal{A}_{\rm id}/\rho)^k\|_{\mathcal{L}(X \times \mathbb{C})} < \infty$ with $\rho = 0.908$. Therefore,
Assumption~\ref{assump:power_stability} is satisfied.
We set $P_1 = P$ for the controller \eqref{eq:controller_zero} equipped with 
the zero compensation strategy.

\subsubsection{Quantizers}
For $L \in \mathbb{N}$, let the function 
$\mathsf{Q}_L\colon \mathbb{R} \to \mathbb{R}$ 
act on the interval $[-1,1]$ as uniform quantization with range $[-1,1]$ and step-size $2/L$.
The parameter $L$ means the number of quantization levels.
The quantization function $
\mathsf{Q}_{\inc} $ for the plant input 
is defined as $\mathsf{Q}_L$ with $L = L_{\inc}\in \mathbb{N}$.
Similarly, the quantization function $\mathsf{Q}_{\out}$ for the plant output
is defined as $\mathsf{Q}_L$ with $L = L_{\out}\in \mathbb{N}$.
Then the error bounds $\Delta_{\inc}$ and $\Delta_{\out}$ are given by
$\Delta_{\inc} = 1/L_{\inc}$ and $\Delta_{\out} = 1/L_{\out}$.

Some operator norms are used in
the proposed quantizer design; 
see \eqref{eq:E_difference_eq}--\eqref{eq:ex_conv_cond}.
To compute them numerically, we employ Theorems~\ref{thm:norm_conv},
\ref{thm:Acl_powered_bound}, and \ref{thm:discrete_feedthrough}.
For example, here we consider the operator $\mathcal{A}$ 
on $X \times \mathbb{C}$ defined by
$
\mathcal{A} \coloneqq  \mathcal{A}_{\rm id}
$
and its finite-dimensional approximation $\mathcal{A}_N$ on 
$X_N \times \mathbb{C}$ defined by
\[
\mathcal{A}_N \coloneqq
\begin{bmatrix}
	\Pi_N A_{\tau}|_{X_N} & \Pi_N B_{\tau}R  \\ QC_{\tau}|_{X_N} & P+QD_{\tau}R
\end{bmatrix}.
\]
Let $\rho_0 > 0.908$ and
\[
M_N(\rho_0) \coloneqq \sup_{k \in \mathbb{N}}  \left\| \left(
\frac{\mathcal{A}_N}{\rho_0}\right)^k
\right\|_{\mathcal{L}(X_N\times \mathbb{C})}.
\]
Then Theorem~\ref{thm:Acl_powered_bound} shows that
\[
\sup_{k \in \mathbb{N}} \left\| \left(
\frac{\mathcal{A}}{\rho_0}\right)^k
\right\|_{\mathcal{L}(X\times \mathbb{C})}  \leq  \limsup_{N \to \infty} 
M_N(\rho_0).
\]
Now 
consider $X \times \mathbb{C}$ to be equipped with the norm 
\[
\left\|
\begin{bmatrix}
	x \\ x_{\rm c}
\end{bmatrix}	
\right\|_{X\times \mathbb{C}} \coloneqq (\|x\|_X^2 + |x_{\rm c}|^2)^{1/2}.
\]
Figure~\ref{fig:Mn_conv} illustrates the constant $M_N(\rho_0)$ versus 
the approximation order $N$ in the case 
$\rho_0= 0.91$.
Based on the numerical computations,
$\limsup_{N\to \infty} M_N(\rho_0)$ is found to be approximately $1.3770$.
For the simulations in
Sections~\ref{sec:relation} and \ref{sec:time_resp} below,
the constants $M$ and $\rho$ 
used in Assumption~\ref{assump:power_stability} 
are set to $1.3770$ and $0.91$, respectively.
Similar 
numerical results are obtained for the other norms such as 
$\|\mathcal{B}_{\inc,0}\|_{\mathcal{L}(\mathbb{C}, X\times \mathbb{C})}$.

\begin{figure}[tb]
	\centering
	\includegraphics[width = 10cm,clip]{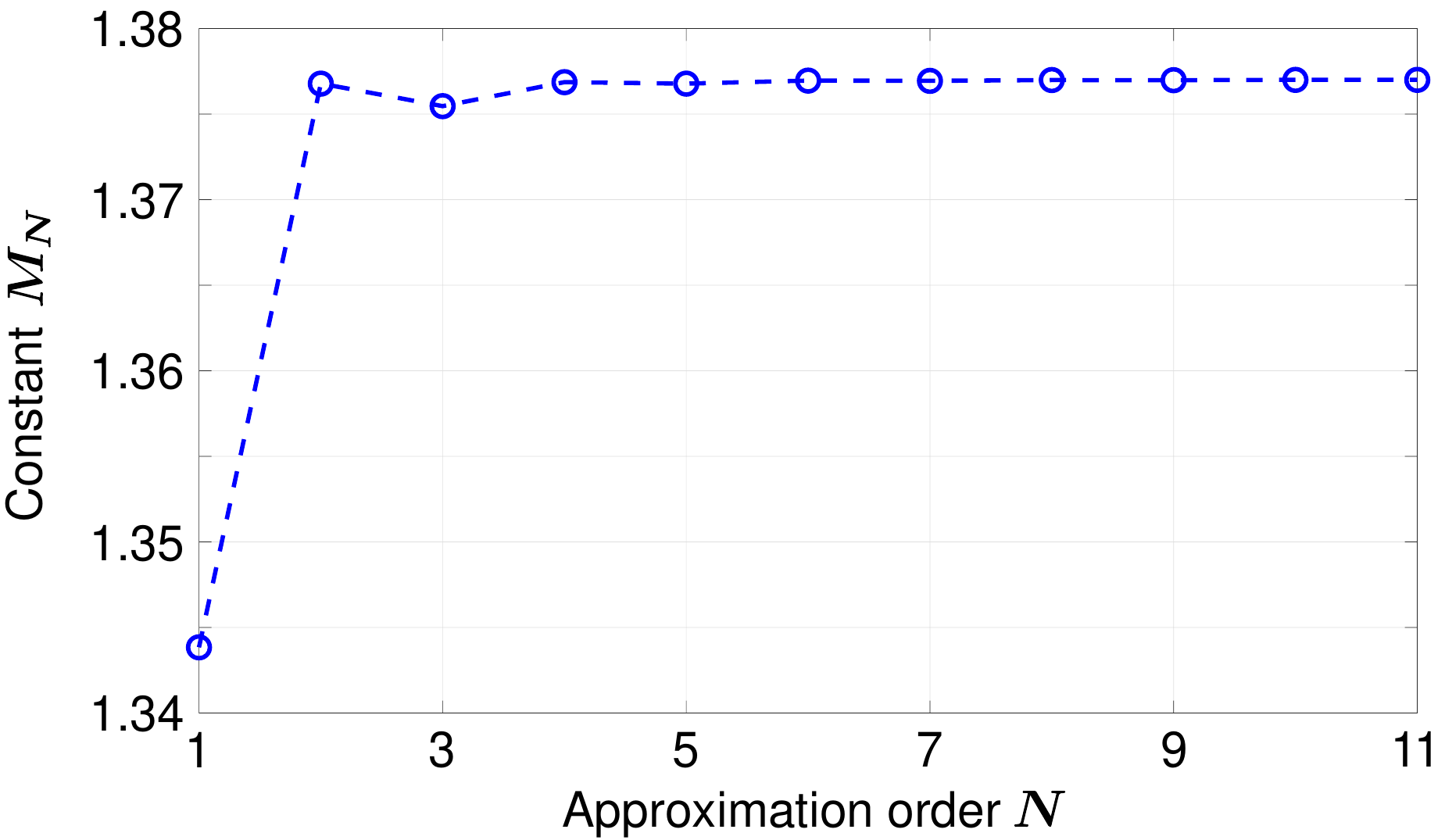}
	\caption{Constant $M_{N}(\rho_0)$ versus approximation order $N$ 
		in the case $\rho_0 = 0.91$.}
	\label{fig:Mn_conv}
\end{figure}

\subsection{Relationship between quantization coarseness 
	and packet-loss duration}
\label{sec:relation}
From Theorems~\ref{thm:exp_conv} and \ref{thm:exp_conv2}, 
we obtain relationships 
between the numbers $L_{\inc},L_{\out}$  of quantization levels and
the duration bound $\nu$ of packet loss  in the cases of
zero compensation and hold compensation,
respectively.
Figures~\ref{fig:zero_uystep_nu} and \ref{fig:hold_uystep_nu} show
the color map of 
the upper bound $
\log (1/\eta_0  )/ \log(\eta_1 / \eta_0)
$ of  $\nu$  given in Theorems~\ref{thm:exp_conv}
and \ref{thm:exp_conv2}, respectively,
as a function of  $L_{\inc}$ and $L_{\out}$.
The black solid lines are the contours of $
\log (1/\eta_0  )/ \log(\eta_1 / \eta_0)
$, which are plotted
at $0.06, 0.10, 0.14, 0.18$ in Figure~\ref{fig:zero_uystep_nu} and 
at $0.02, 0.04, 0.06,0.08 $ in Figure~\ref{fig:hold_uystep_nu}.
From these figures, we see that 
the zero compensation strategy tolerates more 
frequent packet loss and coarser quantization
than the hold compensation strategy.
In addition, compared with the zero compensation strategy,
the hold compensation strategy places more importance on
output quantization than on input quantization.

\begin{figure}
	\centering
	\subcaptionbox{ 
		Zero compensation.
		\label{fig:zero_uystep_nu}}
	{\includegraphics[width = 8cm,clip]{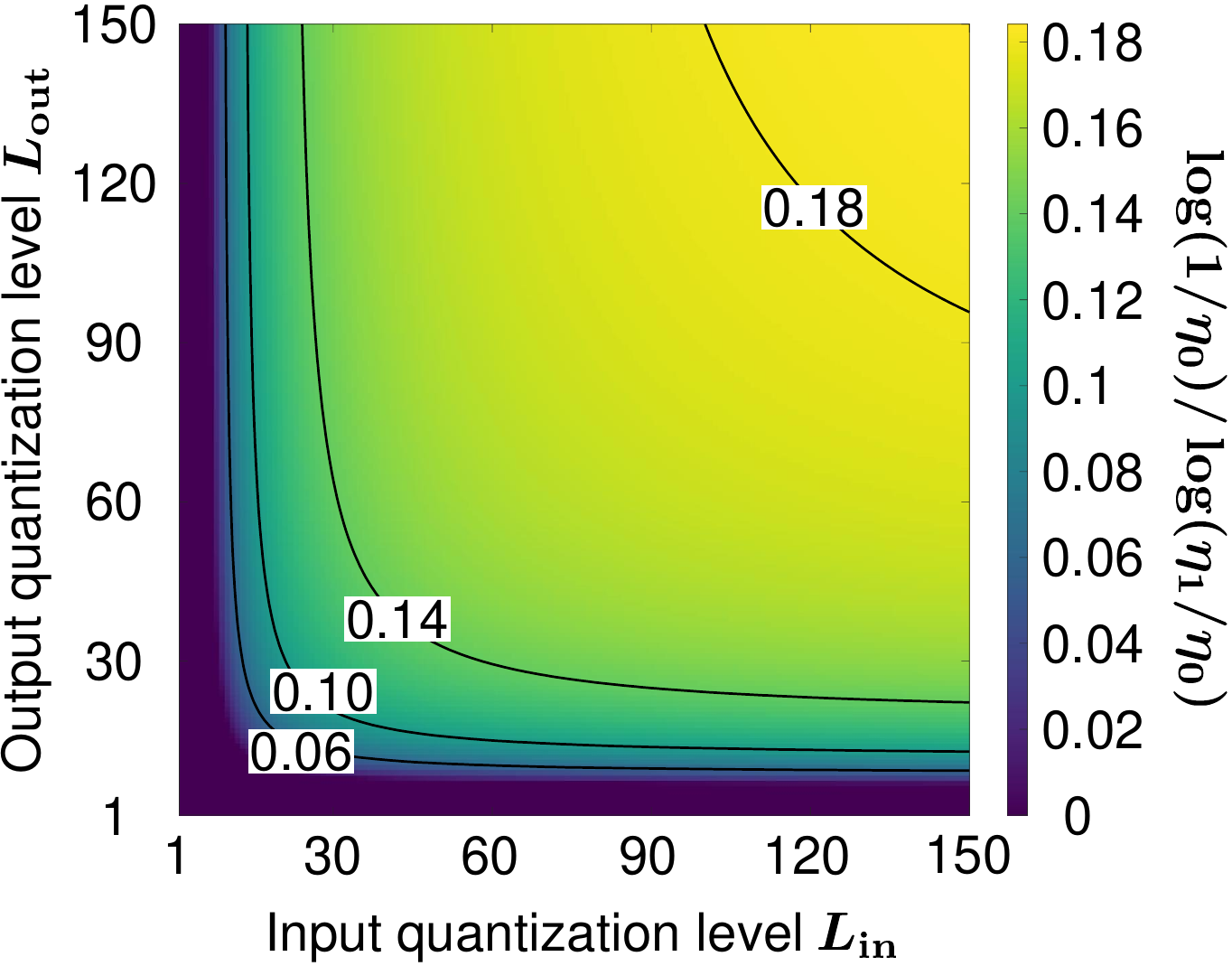}} \hspace{10pt}
	\subcaptionbox{Hold compensation.
		\label{fig:hold_uystep_nu}} 
	{\includegraphics[width = 8cm,clip]{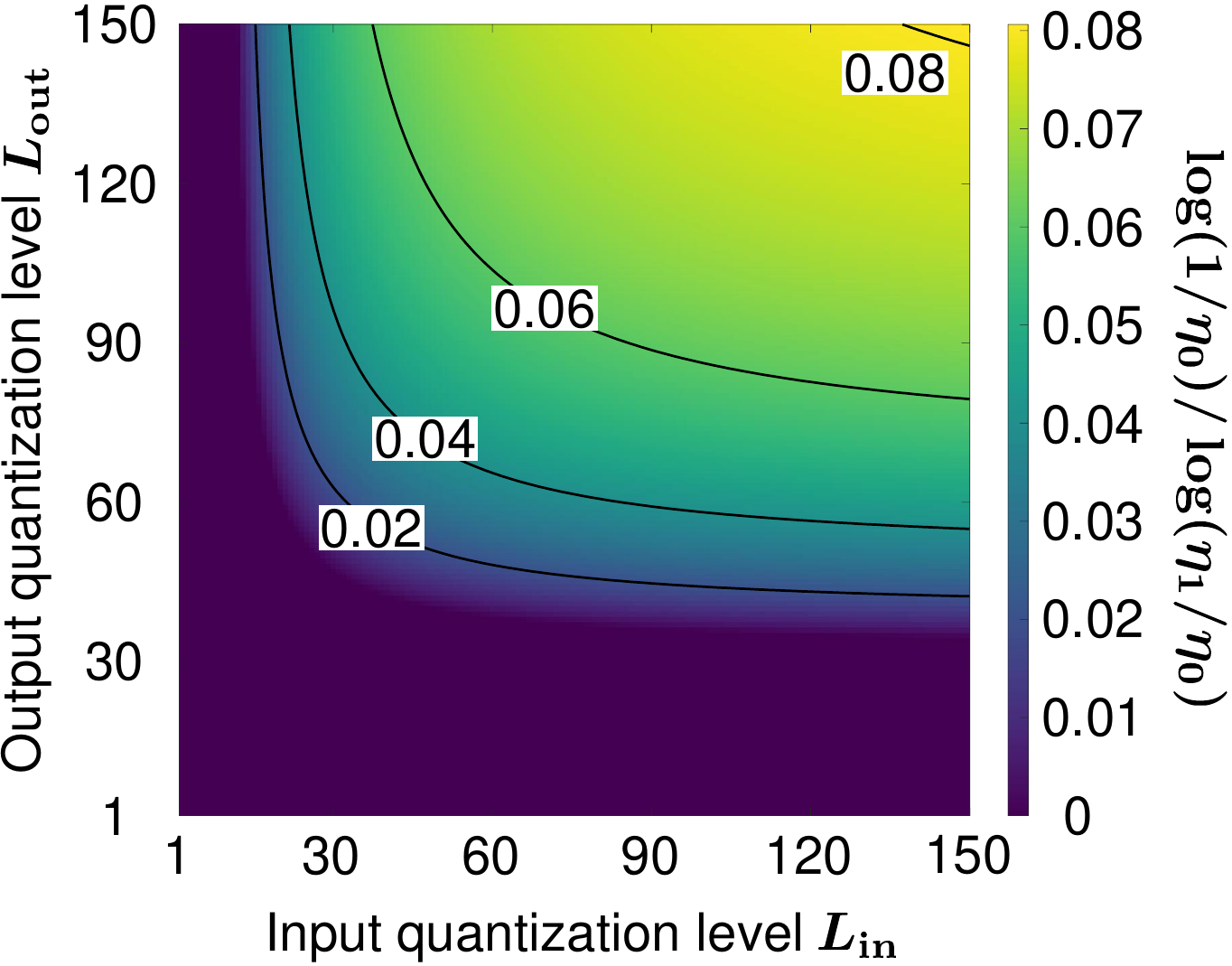}}
	\caption{Relationships between 
		numbers $L_{\inc}, L_{\out}$  of quantization levels and duration bound $\nu$
		of packet loss.
		\label{fig:uystep_nu}}
\end{figure}

\subsection{Time responses}
\label{sec:time_resp}
Let
the initial states $x^0 \in X$ and $x_{\rm c}^0 \in \mathbb{C}$ 
of the plant and the controller be given by
$x^0(\xi) \equiv 1$ and $x_{\rm c}^0 = 0$.
We set the initial value $\hat q_{\rm \out}^{\,0} $ 
of the auxiliary controller state to $0$ for the hold compensation strategy.
The initial state bound $E_0$ is assumed to be given by
$E_0 = 1$. The numbers $L_{\inc},L_{\out}$ of quantization levels are set to
$150$.	
Here we consider
the situation where packet loss occurs only
in the sensor-to-controller channel.

First, we show simulation results on the
zero compensation strategy.
Figure~\ref{fig:zero_time_response} depicts the time responses of
the controller input $q_{\rm out}$, 
the sampled plant output $y_{\rm c} = \mathcal{S}_{\tau}y$, 
the plant input $\mathcal{H}_{\tau}q_{\rm in}$, and
the controller output $u_{\rm c}  = Rx_{\rm c}$ in
the case $(\Xi,\nu) = (1,0.175)$, where
$q_{\rm out}(k)$ is generated from $y_{\rm c}(k)$ by 
the output quantizer $Q_{\out}$ and
$(\mathcal{H}_{\tau}q_{\rm in})(k\tau)$ is generated  from $u_{\rm c}(k)$ by
the input quantizer $Q_{\inc}$.
In Figure~\ref{fig:zero_time_response}, the packet containing $q_{\rm out}$ is lost
on the intervals that are colored in gray.

We observe from Figure~\ref{fig:zero_time_response} that 
the sampled plant output and the plant input converge to zero.
However, small oscillations occur
due to coarse quantization and the even number of quantization levels.
We also see from the time responses on the interval $[4,6]$ that 
the quantization errors become large after a packet transmission fails.
This is because
the zoom parameters of the quantizers
increase when packet loss is detected. 
In order to avoid quantizer saturation,
the zoom parameters decrease at a slower rate 
than the closed-loop state in the absence of packet loss
and increase at a higher rate in the presence of packet loss.
Therefore, the quantization errors are large 
compared with the outputs of the plant and the controller for $t \geq 2$.

Next, we present simulation results on the
hold compensation strategy. 
Figure~\ref{fig:hold_time_response} shows the time responses of the same signals
as Figure~\ref{fig:zero_time_response}, where
$(\Xi,\nu) = (1,0.072)$.
Gray-colored intervals indicate periods in which packet loss occurs.
As in the case of zero compensation,
we see that
the sampled plant output and the plant input converge to zero with small oscillations.

By comparing the time responses of the plant input on the interval $[0,1]$ in
Figures~\ref{fig:zero_input} and \ref{fig:hold_input}, 
we find the difference of the input behavior after packet loss between
the zero compensation strategy and the hold compensation strategy.
Moreover,
the time responses on the interval $[4,6]$ show that 
when the packet is dropped,
the quantization errors in the
hold compensation case become 
larger  than those in the zero compensation case.
This is because the rate of increase of the zoom parameters 
is given by $\eta_1 =1.4626$ in the zero compensation case and 
by $\eta_1 = 2.1515$ in the hold compensation case.

\begin{figure}
	\centering
	\subcaptionbox{ 
		Controller input $q_{\rm out}$ and sampled plant output $y_{\rm  c}$.
		\label{fig:zero_output}}
	{\includegraphics[width = 10cm,clip]{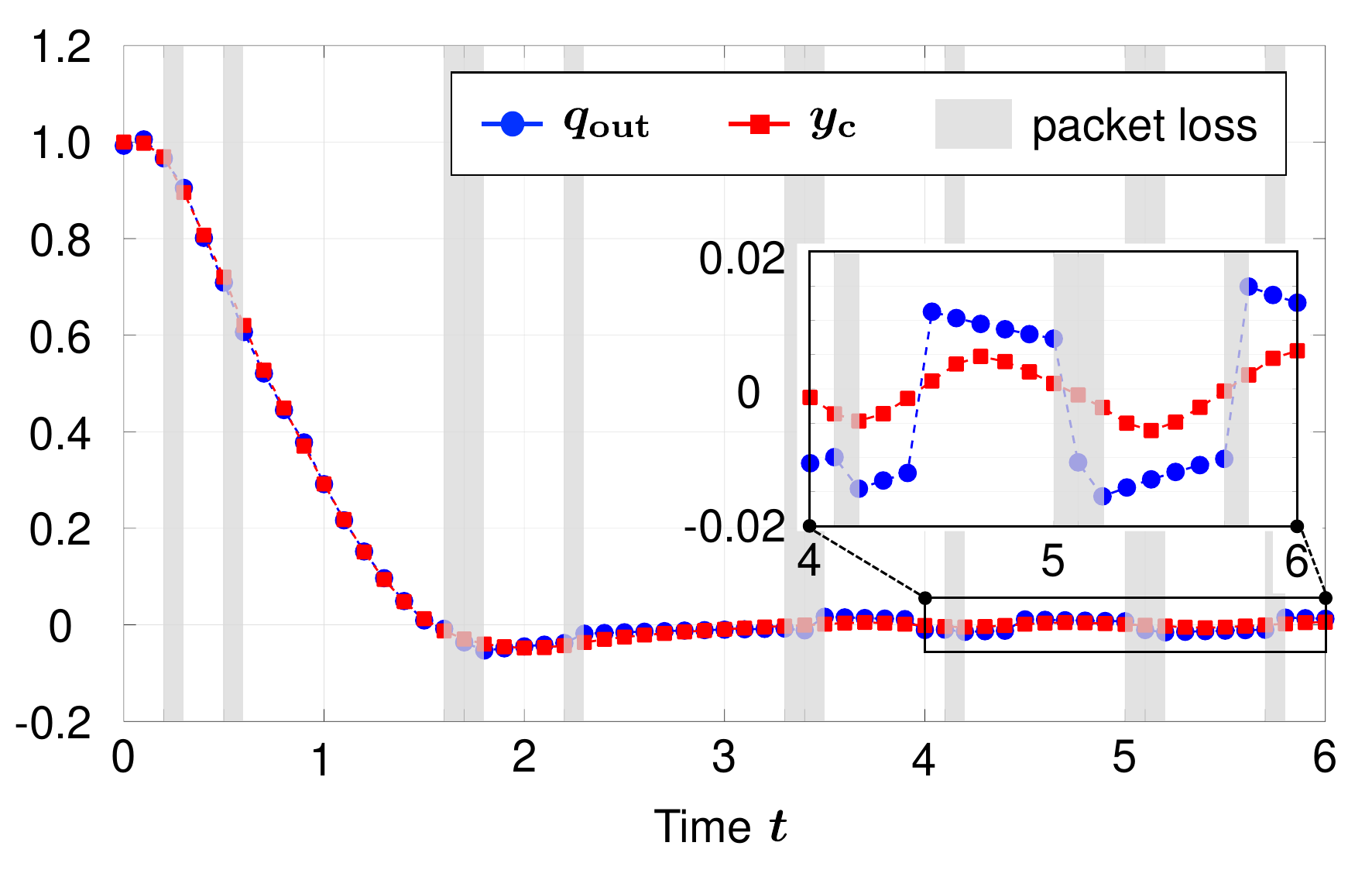}} 
	\subcaptionbox{Plant input $\mathcal{H}_{\tau}q_{\inc}$ and 
		controller output 
		$u_{\rm c}$.
		\label{fig:zero_input}}
	{\includegraphics[width = 10cm,clip]{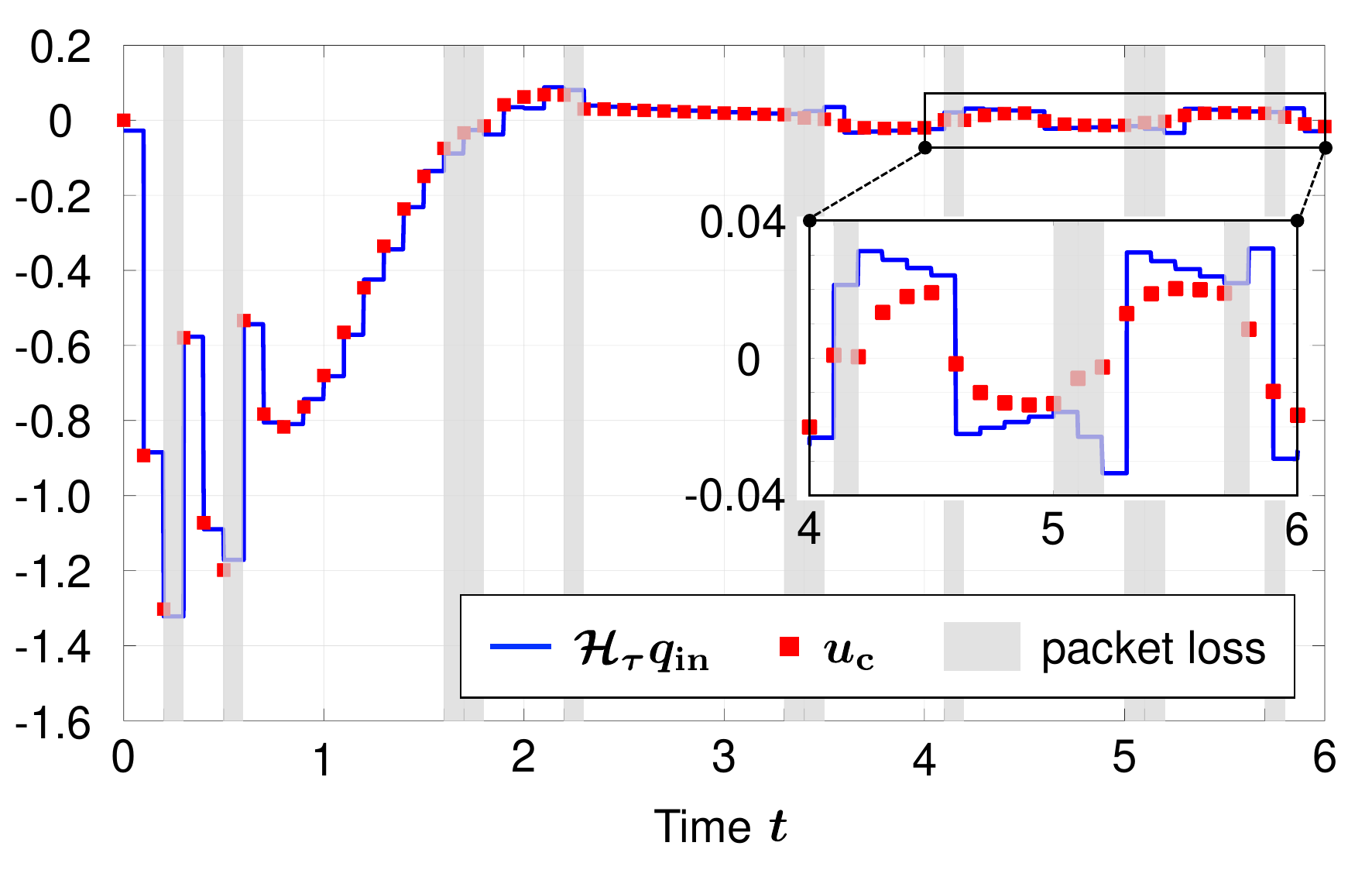}}
	\caption{Time responses under zero compensation.
		\label{fig:zero_time_response}}
\end{figure}

\begin{figure}
	\centering
	\subcaptionbox{ 
		Controller input $q_{\rm out}$ and sampled plant output $y_{\rm  c}$.
		\label{fig:hold_state}}
	{\includegraphics[width = 10cm,clip]{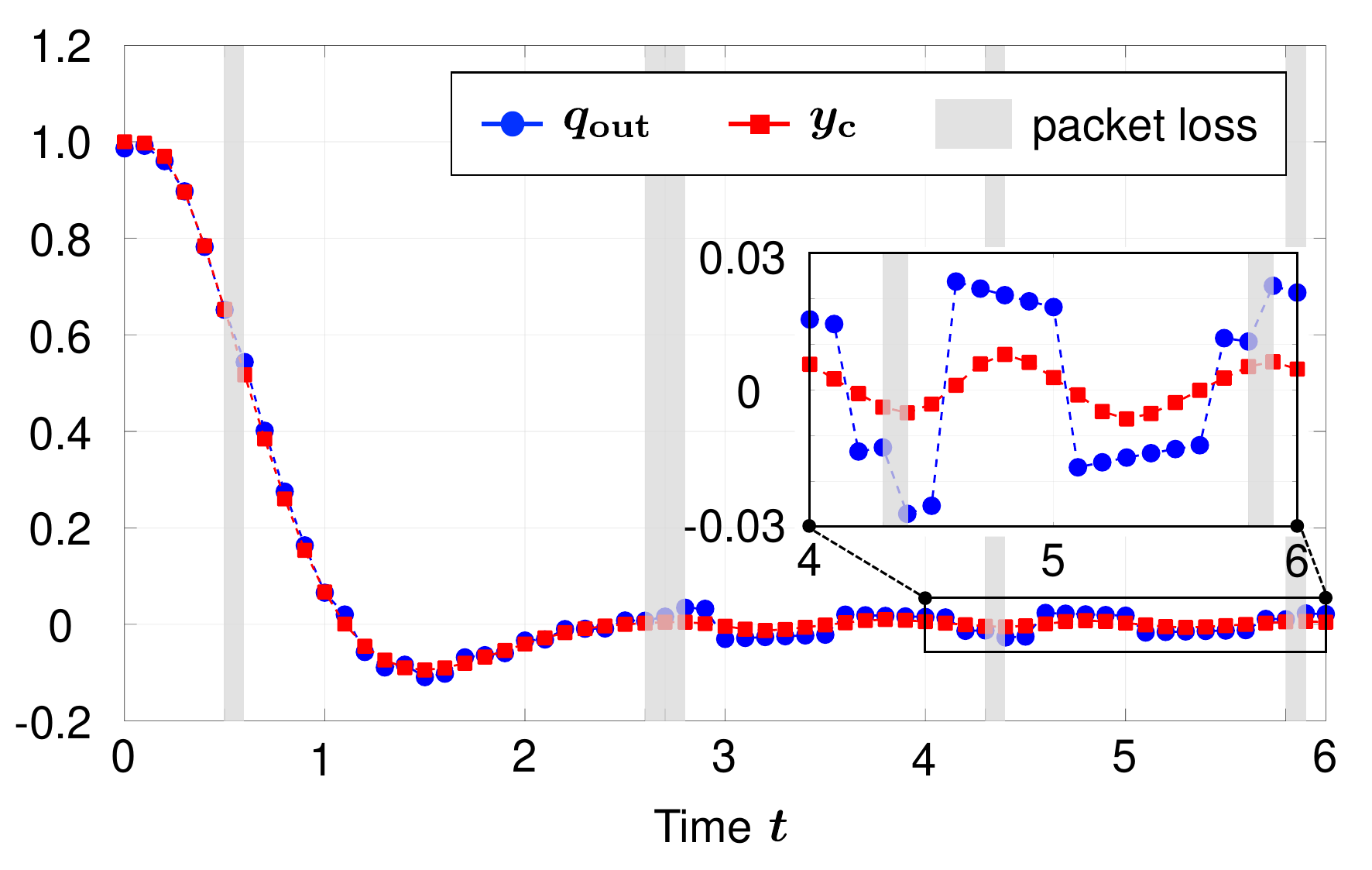}} 
	\subcaptionbox{Plant input $\mathcal{H}_{\tau}q_{\inc}$ and 
		controller output 
		$u_{\rm c}$.
		\label{fig:hold_input}}
	{\includegraphics[width = 10cm,clip]{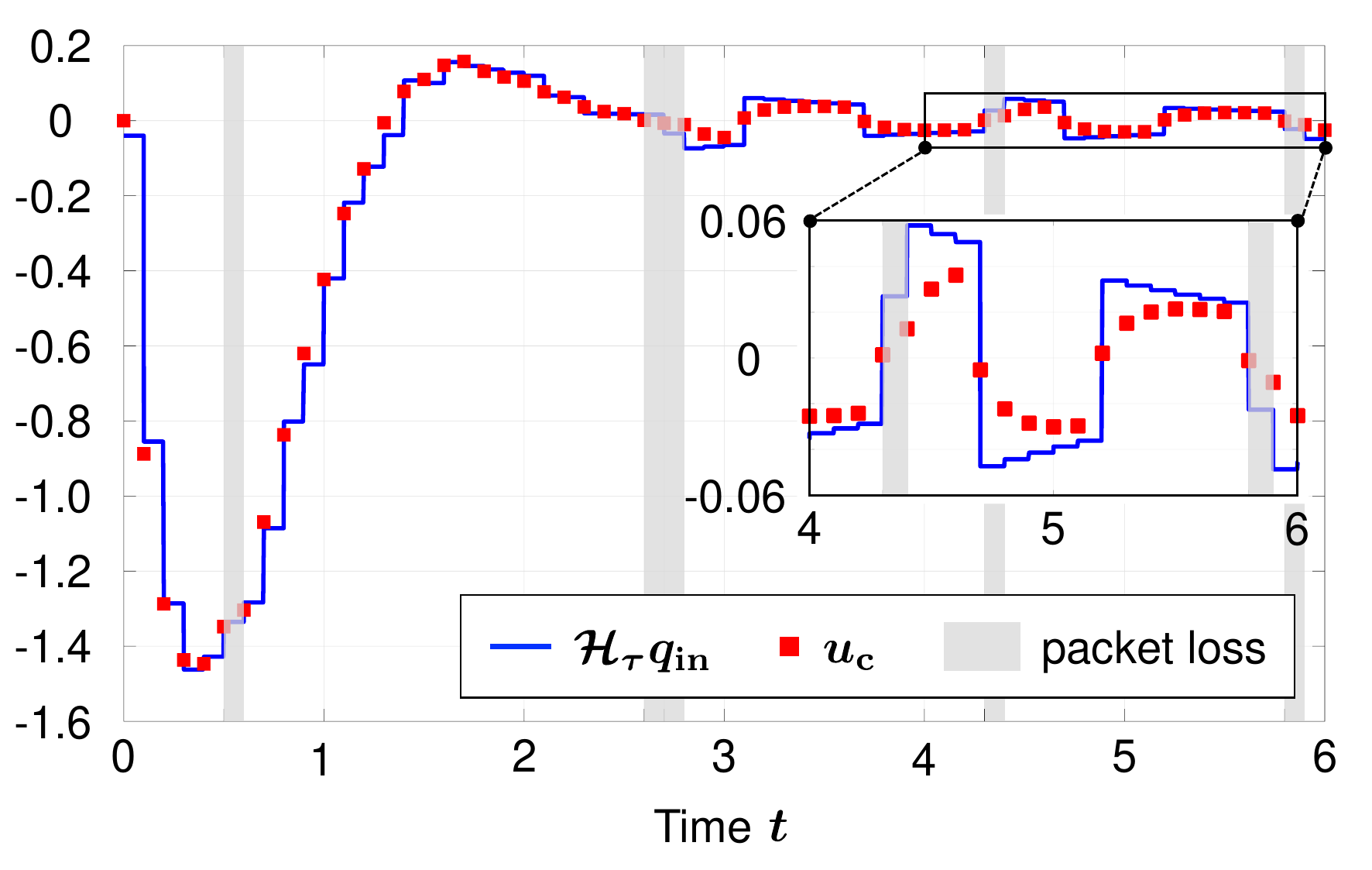}}
	\caption{Time responses under hold compensation.
		\label{fig:hold_time_response}}
\end{figure}

\section{Concluding remarks}
\label{sec:conclusion}
We have studied the output feedback stabilization problem for
infinite-dimensional systems with quantization and packet loss. First,
we have proposed the design method of dynamic quantizers for discrete-time systems with
packet loss
and have provided a sufficient condition for the closed-loop
system to achieve exponential convergence.
Next, 
we have presented an approximation-based method for computing
the  operator norms used in the quantizer design.
Finally, 
for
the sampled-data regular linear system with quantization and packet loss,
we have proved that exponential convergence at sampling times
leads to that on $\mathbb{R}_+$. 
We have also provided
a series representation of the feedthrough matrix
of the sampled-data system, by using
the spectral expansion of the semigroup generator of
the plant.
Future work will focus on addressing quantized control of systems
with weaker stability properties
such as semi-uniform stability.

\section*{Acknowledgments}
The author would like to thank Takehiko Kinoshita, 
Yoshitaka Watanabe, and Mitsuhiro T. Nakao  for sharing
the unpublished note related to \cite[Theorem~2.9]{Kinoshita2020}, which 
provided the core idea for proving Theorem~\ref{thm:norm_conv}.

\end{document}